
%
\documentclass{amsart}
%
%
\usepackage{amsmath}
\usepackage{mathrsfs}
\usepackage{amsfonts}
\usepackage{amssymb,color}
\usepackage{graphicx}
\usepackage[square,numbers]{natbib}
\usepackage{verbatim}
\usepackage{enumerate}
\usepackage{float}
\usepackage{hyperref}
%

\newtheorem{theorem}{Theorem}
\newtheorem{claim}[theorem]{Claim}

\newtheorem{lemma}[theorem]{Lemma}
\newtheorem{proposition}[theorem]{Proposition}

\newtheorem*{theorem*}{Theorem}
\newtheorem*{lemma*}{Lemma}

\theoremstyle{definition}
\newtheorem{definition}[theorem]{Definition}

\theoremstyle{remark}

\numberwithin{theorem}{section}
\numberwithin{equation}{section}

\def\Xint#1{\mathchoice
	{\XXint\displaystyle\textstyle{#1}}%
	{\XXint\textstyle\scriptstyle{#1}}%
	{\XXint\scriptstyle\scriptscriptstyle{#1}}%
	{\XXint\scriptscriptstyle\scriptscriptstyle{#1}}%
	\!\int}
\def\XXint#1#2#3{{\setbox0=\hbox{$#1{#2#3}{\int}$}
		\vcenter{\hbox{$#2#3$}}\kern-.5\wd0}}

\def\dashint{\Xint-}

\newcommand{\dd}{\; \mathrm{d}}

\newcommand{\bbG}{\mathbb{G}}

\newcommand{\bbR}{\mathbb{R}}
\newcommand{\bbN}{\mathbb{N}}

\DeclareMathOperator*{\lip}{Lip}

\begin{document}
	\title[Maximal Directional Derivatives in Laakso Space]{Maximal Directional Derivatives in Laakso Space}
	
	\author[Marco Capolli]{Marco Capolli}
	\address[Marco Capolli]{Institute of Mathematics, Polish Academy of Sciences, Jana i Jadrzeja Sniadeckich 8, Warsaw, 00-656, Poland}
	\email[Marco Capolli]{Mcapolli@impan.pl}
	
	\author[Andrea Pinamonti]{Andrea Pinamonti}
	\address[Andrea Pinamonti]{Department of Mathematics, University of Trento, Via Sommarive 14, 38123 Povo (Trento), Italy}
	\email[Andrea Pinamonti]{Andrea.Pinamonti@unitn.it}
	
	\author[Gareth Speight]{Gareth Speight}
	\address[Gareth Speight]{Department of Mathematical Sciences, University of Cincinnati, 2815 Commons Way, Cincinnati, OH 45221, United States}
	\email[Gareth Speight]{Gareth.Speight@uc.edu}
	
	\date{\today}
	
	\begin{abstract}
		We investigate the connection between maximal directional derivatives and differentiability for Lipschitz functions defined on Laakso space. We show that  maximality of a directional derivative for a Lipschitz function implies differentiability only for a $\sigma$-porous set of points. On the other hand, the distance to a fixed point is differentiable everywhere except for a $\sigma$-porous set of points. This behavior is completely different to the previously studied settings of Euclidean spaces and Carnot groups.
	\end{abstract}
	
	\maketitle

\section{Introduction}
	
Rademacher's theorem states that each Lipschitz function between Euclidean spaces is differentiable almost everywhere with respect to Lebesgue measure. Hence, while Lipschitz functions are relatively flexible, they still have strong differentiability properties. This important result has many consequences. For instance, it is used to prove the area and coarea formulas and to study rectifiable sets \cite{EG15, Mag12}.

Rademacher's theorem has been further studied in several directions. One direction of research extends Rademacher's theorem to Lipschitz functions between more general spaces. There are versions of Rademacher's theorem for mappings between infinite dimensional Banach spaces \cite{BL00, LPT13}, Carnot groups \cite{Pan89,MPS,PS172}, and metric measure spaces admitting a differentiable structure \cite{Che99}. Interesting features arise in each case. In infinite dimensional Banach spaces one must distinguish between Gateaux differentiability (directional derivatives form a linear map) and full Frechet differentiability (difference quotients also converge uniformly). In Carnot groups, the derivatives are group linear mappings. In metric measure spaces, including the Laakso space studied in the present paper, the notion of differentiability is with respect to a collection of Lipschitz charts.

Another direction of research investigates to what extent Rademacher's theorem is optimal. Rademacher's theorem can equivalently be stated as follows. Whenever a Lipschitz mapping $f\colon \bbR^{n}\to \bbR^{m}$ fails to be differentiable at every point of a set $N\subset \bbR^{n}$, then the set $N$ must have Lebesgue measure zero. The converse question asks if $N\subset \bbR^{n}$ has Lebesgue measure zero, must there exist a Lipschitz map $f\colon \bbR^{n}\to \bbR^{m}$ which fails to be differentiable at every point of $N$? The answer to this question is yes if and only if $n\leq m$ and combines the work of several authors \cite{ACP10, CJ15, PS15, Pre90, Zah46}. In particular, if $n>m$ then there exists a measure zero set $N\subset \bbR^{n}$ with the following property. For every Lipschitz map $f\colon \bbR^{n}\to \bbR^{m}$ there exists a point $x\in N$ such that $f$ is differentiable at $x$. Such a set $N$ is called a universal differentiability set. The size of such sets have been more widely studied in the case $m=1$. In particular, they can be made compact and Hausdorff or Minkowski dimension one \cite{DM11, DM12, DMf, DM14}.

The key technique underlying the construction of universal differentiability sets is the fact that in Euclidean spaces (and some other settings) maximality of a directional derivative for a Lipschitz function implies differentiability. More precisely, if $f\colon \mathbb{R}^{n}\to \bbR$ is a Lipschitz function and $|f'(x,e)|=\mathrm{Lip}(f)$ for some $x\in \bbR^{n}$ and $e\in \bbR^{n}$ with $|e|=1$, then $f$ is differentiable at $x$ \cite{Fit84}. Such a fact is also important in proving several differentiability results in infinite dimensional Banach spaces \cite{LPT13}. In \cite{LPS17, PS16, PS18}, the second and third authors extended this fact and the study of universal differentiability sets to Lipschitz maps $f\colon \bbG\to \bbR$ where $\bbG$ is a Carnot group. There they showed that the implication maximality implies differentiability holds for directional derivatives $Ef(x)$ in a horizontal direction $E$ at a point $x\in \bbG$ if and only if the distance to the origin is differentiable at the point reached by following the direction $E$. Notice maximality implies differentiability is independent of the point $x\in \bbG$ due to the group translations. These equivalent statements are true for every direction in Carnot groups of step two, but more generally depend on the Carnot group and the direction considered. In particular, there exists a Carnot group where they fail for every direction. These techniques were used to show that measure zero universal differentiability sets exist in every Carnot group of step two and in families of Carnot groups of arbitrarily high step.

The present paper investigates to what extent the connection between maximality implies differentiability and differentiability of the distance holds in the nonlinear setting. We focus our attention on the Laakso space \cite{Laa00}, one of the best known examples of a metric measure space which admit a differentiable structure but not a linear structure. Laakso space is defined beginning with $I\times K$ where $I=[0,1]$ and $K$ is a  Cantor set. One identifies components of $K$ at suitable heights (jump levels) in $I$ in order to make a path connected space $F=(I\times K)/ \sim $ (see Section \ref{Preliminaries} for more details). The distance $d$ is then the path distance between points. This space was introduced by Laakso \cite{Laa00} to show that there exist metric measure spaces which are Ahlfors $Q$-regular and support a Poincar\'e inequality for any $Q>1$. As a consequence of \cite{Che99}, such a space admits a differentiable structure.

Since Laakso space is a doubling metric measure space supporting a Poincar\'e inequality \cite{Laa00}, it admits a differentiable structure with respect to which Lipschitz functions are differentiable almost everywhere \cite{Che99}. The differentiable structure consists of a single chart with projection onto $I$ (Definition \ref{differentiability}). This fact seems well known to experts but we were unable to find an explicit reference. Hence we give an explicit proof in Section \ref{RademacherProof}. Next we define directional derivatives by considering difference quotients along line segments in the $I$ direction (Definition \ref{def_vert_deriv}). This is natural since geodesics consist of line segments in the $I$ direction with at most a countable number of jumps. We show that, as in the Euclidean and Carnot setting, the Lipschitz constant of a Lipschitz map $f\colon F\to \bbR$ is the supremum of directional derivatives $f_{I}(x)$ (Proposition \ref{Lipissup}) over points $x\in F$. This motivates the definition of maximal directional derivative given in Definition \ref{defM}. With these ingredients in place, we now state our first main theorem.	
	
	\begin{theorem}\label{mainthmmaxdiff}
		Let $M$ be the set of $x\in F$ such that whenever $f\colon F\to \bbR$ is Lipschitz with directional derivative $f_{I}(x)=\pm \lip(f)$ at $x$, then $f$ is differentiable at $x$. 
		
		Then $M=\{[x_{1},x_{2}]\colon x_{1}\in S\}$.  In particular $M$ is $\sigma$-porous.
	\end{theorem}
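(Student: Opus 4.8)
The plan is to establish the two inclusions $M\supseteq\{[x_{1},x_{2}]\colon x_{1}\in S\}$ and $M\subseteq\{[x_{1},x_{2}]\colon x_{1}\in S\}$, and then to read off $\sigma$-porosity from the corresponding property of $S$ in $I$. Fix $x=[x_{1},x_{2}]$ and a Lipschitz $f$ with $f_{I}(x)=\pm\lip(f)$; normalising, we may assume $\lip(f)=1$, $f_{I}(x)=1$ (the case $-1$ is symmetric) and $f(x)=x_{1}$. Put $g:=f-\pi$, where $\pi\colon F\to I$ is the chart (Definition~\ref{differentiability}); then $\lip(g)\le 2$, and, as the directional derivative $f_{I}(x)$ necessarily equals the chart derivative of any differential of $f$ at $x$, differentiability of $f$ at $x$ is equivalent to $g(y)=o(d(x,y))$ as $y\to x$. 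We use throughout that $h\colon F\to\bbR$ is $L$-Lipschitz if and only if it is $L$-Lipschitz along every fibre $\{[t,s]\colon t\in I\}$: this follows from Proposition~\ref{Lipissup} together with the fact (Section~\ref{Preliminaries}) that geodesics are concatenations of fibre segments joined by jumps at jump levels, a jump costing nothing because the identified points coincide.

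For $M\supseteq\{[x_{1},x_{2}]\colon x_{1}\in S\}$, assume $x_{1}\in S$ and let $y\to x$. Maximality forces $g$ to vanish to first order at $x_{1}$ along the fibre(s) through $x$, so $g$ is $o(d(x,y))$ on those fibres near $x$. For general $y$ run a geodesic from $x$ to $y$: it moves along a fibre through $x$, then possibly jumps to a neighbouring fibre at a jump level, moves along that one, and so on. The membership $x_{1}\in S$ says exactly that the jump levels met on such excursions always occur in pairs straddling $x_{1}$; combined with $\lip(g)\le 2$ and the vanishing of $g$ along the fibres through $x$, each passage through a straddling pair squeezes the value of $g$ on the new fibre between two bounds which, divided by the distance already travelled, are $o(1)$. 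Iterating over the (countably many) scales the geodesic visits gives $g(y)=o(d(x,y))$; making these squeezing estimates uniform across all scales is the technical core of this direction.

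For $M\subseteq\{[x_{1},x_{2}]\colon x_{1}\in S\}$, assume $x_{1}\notin S$; we build a Lipschitz $f$ with $f_{I}(x)=\lip(f)$ that is not differentiable at $x$. Since $x_{1}\notin S$, the $S$-condition fails at infinitely many scales: there are jump levels $\ell_{n}\downarrow x_{1}$ at which the Cantor piece of $x_{2}$ is identified with its sibling and whose companion jump level (carrying the same identification) also lies above $x_{1}$; let $s_{n}$ be the partner of $x_{2}$ at $\ell_{n}$, so that $y_{n}:=[x_{1},s_{n}]\to x$ with $d(x,y_{n})\asymp\ell_{n}-x_{1}$ and $\pi(y_{n})=\pi(x)$. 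One then seeks $f$ of the form $f([t,s])=\max\{t,\,l(s)\}$, with $l$ supported near $x_{2}$ in $K$ and $l(s_{n})$ about $\tfrac12(x_{1}+\ell_{n})$: the $1$-Lipschitz bound is immediate from the fibre reformulation, $f$ coincides with $\pi$ on the fibre of $x$ so $f_{I}(x)=1=\lip(f)$, yet $|f(y_{n})-f(x)|\asymp d(x,y_{n})$ while $\pi(y_{n})=\pi(x)$, so $g(y_{n})\ne o(d(x,y_{n}))$ and $f$ is not differentiable at $x$. The obstacle — and, I expect, the heart of the whole theorem — is to choose $l$ compatibly with \emph{all} identifications of $F$ at every level while preserving the one-sided room above $x_{1}$; here the self-similarity of $F$ and the density of jump levels afforded by $x_{1}\notin S$ are what make the construction possible.

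Finally $M$ is $\sigma$-porous. By Section~\ref{Preliminaries} the set $S$ is $\sigma$-porous in $I$, say $S=\bigcup_{k}S_{k}$ with each $S_{k}$ porous; porosity lifts through $\pi$ because $d([x_{1},s],[x_{1}',s'])\ge|x_{1}-x_{1}'|$ always and $d([x_{1},s],[x_{1}',s])=|x_{1}-x_{1}'|$, so a ball in $I$ contained in $B(x_{1},\rho)\setminus S_{k}$ produces a comparable ball in $F$ contained in $B([x_{1},s],\rho)$ and disjoint from $\{[x_{1},x_{2}]\colon x_{1}\in S_{k}\}$. Hence each $\{[x_{1},x_{2}]\colon x_{1}\in S_{k}\}$ is porous and $M$ is $\sigma$-porous.
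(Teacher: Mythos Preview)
Your outline has the right three-part structure, but both inclusions contain genuine gaps, and your reading of $S$ is off.

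\textbf{Misreading of $S$.} You write that ``$x_{1}\in S$ says exactly that the jump levels met on such excursions always occur in pairs straddling $x_{1}$''. That is not what $S$ encodes: for any $x_{1}\in(0,1)$ and large $n$ there is always a jump level of order $n$ above and one below. The content of $x_{1}\in S$ is that the ratio $D_{n}^{+}(x_{1})/D_{n}^{-}(x_{1})$ is bounded away from $0$ and $\infty$, which via $D_{n}^{+}+D_{n}^{-}\ge 1/3^{n}$ yields a lower bound $\min(D_{n}^{+},D_{n}^{-})\ge c(x_{1})/3^{n}$. Your ``squeezing across straddling pairs'' picture does not use this, and the iterated fibre-by-fibre estimate you sketch does not converge to an $o(d(x,y))$ bound without it. The paper's argument is global rather than geodesic-following: if $N$ is the smallest wormhole order any path from $x$ to $y$ must use, then $d(x,y)\ge c(x_{1})/3^{N}$; one then compares $f(y)$ with $f$ at auxiliary points $z_{N},w_{N}$ on the fibre through $x$ at vertical distance $2/3^{N}$, which can reach $y$ using only wormholes of order $\ge N$ so that $d(y,z_{N})=|h(y)-h(z_{N})|$. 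This is the mechanism you are missing.

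\textbf{The counterexample construction.} Your proposed $f([t,s])=\max\{t,l(s)\}$ runs into exactly the obstacle you flag, and it is fatal rather than merely technical. If $l(s_{n})>x_{1}$, then at every wormhole height $w<l(s_{n})$ of any order $m$ the point $(w,s_{n})$ is identified with $(w,s_{n}\pm 2/3^{m})$, forcing $l$ to be constant on a whole Cantor piece containing $s_{n}$; these pieces accumulate at $x_{2}$ and collide with the requirement $l(x_{2})\le 0$. The paper sidesteps this entirely: it defines $f$ only on the vertical line through $x$ (as $\int_{x_{1}}^{t}\varphi$, with $\varphi$ a step function tending to $1$) together with countably many isolated points $y_{n_{k}}$ reached by a single jump, checks $1$-Lipschitzness on that sparse set by hand, and then extends by McShane. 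The choice $D_{n_{k}}^{-}/D_{n_{k}}^{+}\to\infty$ (which is what $x_{1}\notin S$ actually gives) is what allows $\varphi<1$ below $x_{1}$ while keeping the extension $1$-Lipschitz to the $y_{n_{k}}$.

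\textbf{Porosity.} Your lifting argument is fine and matches the paper, but $\sigma$-porosity of $S$ in $I$ is not in Section~\ref{Preliminaries}; it is proved separately (write $S=\bigcup_{C,N}S_{C,N}$ and show each $S_{C,N}$ has holes near every $t\in J_{n}$ for large $n$).
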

	
The set $S$ consists of those heights in $I$ which see jump levels roughly equidistant above and below on all sufficiently small scales (see Definition \ref{defS} for details). The most striking part of the theorem is that the set where maximality implies differentiability is $\sigma$-porous. A set is porous if it has relatively large holes on arbitrarily small scales and $\sigma$-porous if it is a countable union of $\sigma$-porous sets (Definition \ref{porous}). Such sets are extremely small in multiple ways. Every $\sigma$-porous set is of first category and of measure zero. In summary, the set of points where the implication maximality implies differentiability holds is very small.
	
We next look at differentiability of the distance. There is no distinguished origin in Laakso space and the geometry around different points can look quite different. Hence we study differentiability of the map $y\mapsto d_{p}(y):=d(y,p)$ for each fixed point $p\in F$. Our second main result is the following.
	
\begin{theorem}\label{distancetheorem}
Let $p\in F$ and denote by $B_p\subset F$ the set of points in $F$ at which $d_p$ is not differentiable. Then $h(B_{p})$ is countable, in particular $B_p$ is $\sigma$-porous.	
\end{theorem}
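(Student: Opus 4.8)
The plan is to produce a countable set $C\subset I$, depending only on $p$, with $B_p\subseteq h^{-1}(C)$; the statement then follows at once, since each fibre $h^{-1}(\{t\})$ is porous in $F$. Indeed, $h$ is $1$-Lipschitz, and from any $y$ one can move along vertical segments joined by jumps so that $h$ changes at unit rate; hence, given small $r$, there is $w$ with $d(y,w)\le r/2$ and $|h(w)-t|=r/2$, so $B(w,r/4)\subset B(y,r)$ while $h(B(w,r/4))$ is an interval of length $r/2$ avoiding $t$. Thus $h^{-1}(\{t\})$ is porous with a uniform constant, $h^{-1}(C)$ is $\sigma$-porous, and so is its subset $B_p$; moreover $h(B_p)\subseteq C$ is countable.

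Next I would analyse $d_p$ along vertical segments. Fix $y=[s,x_2]$ with $s$ not a jump level and set $g(r):=d_p([r,x_2])$ near $s$. Since a geodesic can only jump at a jump level, any geodesic from $p$ to $[r,x_2]$ enters it along the vertical segment through it, so the one-sided derivatives of the $1$-Lipschitz function $g$ at $s$ belong to $\{-1,+1\}$. If they agree, say both equal $a$, I claim $d_p$ is differentiable at $y$ with derivative $a$: for $z=[s',x_2']$ near $y$ one has $d_p([s',x_2])=d_p(y)+a(s'-s)+o(d(y,z))$ from $g$, so since $h(z)=s'$ it suffices to show $d_p([s',x_2'])=d_p([s',x_2])$; but $z$ is joined to $[s',x_2]$ through a jump level $j$ close to $s$, and truncating a geodesic from $p$ to $[s',x_2]$ at height $j$ and then jumping gives a path to $z$ of length exactly $d_p([s',x_2])$, while every path to $z$ must pass through such a gateway, hence is at least that long. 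Consequently $d_p$ can fail to be differentiable only at: $p$ itself (the height $h(p)$); points with $h(y)$ a jump level (the countable set $J$ of jump levels); and \emph{cut points} $y$, admitting two geodesics to $p$ entering $y$ from a height $j^{+}>h(y)$ above and a height $j^{-}<h(y)$ below.

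Finally, for a cut point $y=[s,x_2]$, since sub-paths of geodesics are geodesics, $d(p,[j^{+},x_2])+(j^{+}-s)=d(p,[j^{-},x_2])+(s-j^{-})=d_p(y)$, so $s=\tfrac12\big(j^{+}+j^{-}+d(p,[j^{-},x_2])-d(p,[j^{+},x_2])\big)$; as $j^{\pm}$ range over $J$ and each geodesic is a concatenation of vertical segments joined at jump levels, each of $d(p,[j^{\pm},x_2])$ is $\pm h(p)$ plus an integer combination of jump levels, so $s$ lies in a countable set $C_{\mathrm{cut}}$ depending only on $p$. Taking $C=\{h(p)\}\cup J\cup C_{\mathrm{cut}}$ completes the argument. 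The main obstacles I expect are genuinely geometric and use the explicit structure of $F$: because the jump levels are dense, the local analysis in the second step must be done carefully — one needs that, off a countable set of heights, the combinatorial type of the geodesic to $[s,x_2]$ is locally constant, so that $g$ is affine near $s$ and the gateway estimate applies — and the countability claim in the last step requires bounding how many jumps an optimal path to a point lying at a jump level can use.
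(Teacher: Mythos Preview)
Your approach is broadly sound and shares the same two-stage structure as the paper: first reduce differentiability of $d_p$ at $y$ to existence of the vertical derivative $(d_p)_I(y)$ (your gateway argument is the paper's Proposition~\ref{teo_derivimpliesdiff}, proved carefully via Lemma~\ref{claim1}), then show the heights where the vertical derivative fails form a countable set. Where you diverge is in the second stage. The paper fixes a vertical line $V_\Delta^p$ indexed by the jump orders separating it from $p$ and performs an explicit case analysis (zero, one, two jumps, with the general case reduced to two via Lemma~\ref{lemma_parallel_values}), listing the finitely many bad heights on each line. Your classification into jump-level heights and cut points, together with the formula $s=\tfrac12\big(j^{+}+j^{-}+d(p,[j^{-},x_2])-d(p,[j^{+},x_2])\big)$ and the observation (via Proposition~\ref{prop_distance}) that each $d(p,[j^{\pm},x_2])$ lies in the countable set of integer combinations of $h(p)$ and elements of $J$, is more conceptual and avoids the lengthy case split.

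Two of your acknowledged gaps are genuine and correspond exactly to places where the paper needs machinery. First, your gateway step --- that $d_p([s',x_2'])=d_p([s',x_2])$ for $x_2'$ near $x_2$ --- cannot be obtained by ``truncating a geodesic at height $j$ and jumping'': the geodesic from $p$ to $[s',x_2]$ need not pass through the particular high-order jump level $j$ while already on the line through $x_2$. The paper's Lemma~\ref{claim1} handles this via minimal height intervals: the extra high-order jumps needed to reach $x_2'$ are available inside any interval already forced to contain a low-order jump, so the minimal height intervals for $[s',x_2]$ and $[s',x_2']$ coincide. Second, your cut-point formula presupposes each geodesic has a \emph{last} jump $j^{\pm}$ before reaching $y$, i.e.\ that it sits on the line through $x_2$ for a nontrivial terminal segment. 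When infinitely many jump orders separate $p_2$ from $x_2$ this is not automatic for an arbitrary geodesic; you must observe that one can \emph{choose} a geodesic with this property (placing the high-order jumps away from $s$ inside the minimal height interval), or --- as the paper does in Lemma~\ref{lemma_parallel_values} --- reduce to the two-jump line $V_{\Delta'}^p$ on which $d_p$ takes the same values. With these two points filled in, your outline goes through and is arguably cleaner than the paper's enumeration.
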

	
Theorem \ref{distancetheorem} states that the set of points where the distance is differentiable is extremely large (complement of $\sigma$-porous set). On the other hand, Theorem \ref{mainthmmaxdiff} states that the set of points where maximality implies differentiability holds is extremely small (it is $\sigma$-porous). Laakso space cannot be written as a union of $\sigma$-porous sets. Hence maximality implies differentiable cannot be equivalent to differentiability of the distance as in Euclidean spaces and Carnot groups.

We now summarize the organization of the paper. In Section \ref{Preliminaries} we give the basic definitions. In Section \ref{Maximaldirectional} we prove some basic facts about directional derivatives and prove Theorem \ref{mainthmmaxdiff}. In Section \ref{distdiff} we prove Theorem \ref{distancetheorem}. Finally in Section \ref{RademacherProof} we prove Theorem \ref{Rademacher} which verifies the notion of differentiability we have considered is the natural one.
	
\bigskip
	
	\textbf{Acknowledgements:} Part of this paper was written while M. Capolli was a PhD student at the University of Trento advised by the other two authors. A. Pinamonti is a member of {\em Gruppo Nazionale per l'Analisi Matematica, la Probabilit\`a e le loro Applicazioni} (GNAMPA) of the {\em Istituto Nazionale di Alta Matematica} (INdAM). G. Speight  was  supported by a grant from the Simons Foundation (\#576219, G. Speight). Part of this work was done while G. Speight was visiting the University of Trento and supported by funding from the University of Trento.
	
	 The techniques and results in Section \ref{RademacherProof} should be known to experts. Some of the ideas used in that section come from the paper \cite{BS13} by David Bate and the third author and from discussions of the third author with David Bate and David Preiss.
	
	\section{Preliminaries}\label{Preliminaries}
	
	\subsection{Laakso Space}
	
	Let $I=[0,1]$ and let $K\subset [0,1]$ the standard middle third Cantor set. We define $K_{0}:=(1/3)K$ and $K_{1}:=(1/3)K+(2/3)$ to be the left and right similar copies of $K$. We then define $K_{00}:=(1/3)K_0=(1/9)K$ and $K_{01}:=(1/3)K_1=(1/9)K+(2/9)$ to be the left and right similar copies of $K_{0}$. The set $K_{a}$ is defined similarly when $a$ is any finite string of $0$'s and $1$'s. 
	
	We define the height of a point $(x_{1}, x_{2})\in I\times K$ by $h(x_{1},x_{2}):=x_{1}$. If $n\in \bbN$ and $m_{i}\in \{0,1,2\}$ for $1\leq i\leq n$, we define $w(m_{1},\ldots, m_{n}):=\sum_{i=1}^{n} m_{i}/3^{i}$.
	A wormhole level of order $n$ is a set of the form
	\[\{w(m_{1},\ldots, m_{n})\} \times K \subset I\times K, \qquad m_{n}>0.\]
	The condition $m_{n}>0$ implies wormhole levels of different orders do not overlap. We denote the set of wormholes of order $n$ by $J_{n}$.
	
	\begin{definition}
		We define an equivalence relation $\sim$ on $I\times K$ as follows. For each $n\in \bbN$ and wormhole level $\{w(m_{1},\ldots, m_{n})\} \times K$ of order $n$, identify pairwise $\{w(m_{1},\ldots, m_{n})\} \times K_{a0}$ and $\{w(m_{1},\ldots, m_{n})\} \times K_{a1}$ for each binary string $a$ of length $n-1$. More precisely, a point $(x_1, x_2)\in \{w(m_{1},\ldots, m_{n})\} \times K_{a0}$ is identified with $(x_1, x_2+(2/3^n)) \in \{w(m_{1},\ldots, m_{n})\} \times K_{a1}$. Such an identified point is called a wormhole of order $n$.
	\end{definition}
	
	Define $F:=(I\times K)/\sim$. Let $q\colon I\times K\to F$ be given by $q(x_1, x_2)=[x_1, x_2]$, where $[x_{1}, x_{2}]$ denotes the equivalence class in $F$ of $(x_{1},x_{2}) \in I\times K$. We define the height $h\colon F\to I$ by $h[x_{1},x_{2}]=x_{1}$. Notice this is well defined because points identified in the construction of $F$ have the same coordinate in $I$. We define a metric $d$ on $F$ by
	\[d(x,y)=\inf \{\mathcal{H}^{1}(p)\colon q(p) \mbox{ is a path joining }x \mbox{ and }y\},\]
	where $p\subset I\times K$. In \cite{Laa00} it is shown that any pair of points can be connected by a path and so the metric $d$ is well defined. For $p\in F$ we denote by $d_{p}$ the map from $F$ to $\bbR$ given by $y\mapsto d_{p}(y):=d(y,p)$. Clearly this map is $1$-Lipschitz for any $p\in F$. The following proposition gives information about geodesics \cite[Proposition 1.1]{Laa00}.
	
	\begin{proposition}\label{prop_geodesic}
		Fix $x,y\in F$ with $h(x)\leq h(y)$. Let $[a,b]\subset I$ be an interval of minimum length that contains the heights of $x$ and $y$ and all the wormhole levels needed to connect those points with a path. Let $p$ be any path starting from $x$, going down to height $a$, then up to height $b$, then down to $y$. 
		
		Then $p$ is a geodesic connecting $x$ and $y$. All geodesics from $x$ to $y$ are of that form for some interval $[a',b']$ such that $b'-a'=b-a$.
	\end{proposition}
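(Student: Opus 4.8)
The plan is to compute $d(x,y)$ exactly, identify it with the length of the path $p$, and then characterize the geodesics. Write $x=[x_{1},s]$ and $y=[y_{1},t]$ with $s,t\in K$ and $x_{1}=h(x)\le h(y)=y_{1}$, and encode each point of $K$ by the $\{0,1\}$-sequence recording, at each level of the Cantor construction, whether it falls in the left or the right third; with this coding the identification at a wormhole level of order $n$ changes exactly the $n$-th entry of the sequence and leaves the others fixed. Let $D$ be the set of positions in which the codings of $s$ and $t$ differ. The structural fact I would establish first is that, since $K$ is totally disconnected, any path inside $I\times K$ has constant second coordinate; hence a path in $F$ joining $x$ and $y$ is a countable concatenation of vertical segments glued at wormhole points, the height function along it is continuous, and the only way it can alter the Cantor coordinate is to pass through a wormhole point of order $n$ and there toggle the $n$-th entry.

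The upper bound is the explicit construction: along the path $p$ of the statement the height decreases from $x_{1}$ to $a$, increases from $a$ to $b$, then decreases from $b$ to $y_{1}$, so the ascending leg passes a wormhole level of every order, and the first time one reaches a wormhole level of an order $n\in D$ not yet used one performs the corresponding identification there. Since a wormhole of order $n$ toggles the $n$-th entry from any configuration, the toggles are independent and no ordering among them is needed, so after the ascent the coding has been changed exactly on $D$, the path lies on the fibre of $t$, and descending to $y$ finishes it; its length is $(x_{1}-a)+(b-a)+(b-y_{1})=2(b-a)-(h(y)-h(x))$. Hence $d(x,y)\le 2(b-a)-(h(y)-h(x))$.

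For the lower bound, let $\gamma$ be an arbitrary path from $x$ to $y$, with height range $[m,M]=h(\gamma)\supseteq\{x_{1},y_{1}\}$. For each $n\in D$ the ``$n$-th entry'' map on $F$ is continuous and locally constant off the wormhole levels of order $n$ and takes different values at $x$ and $y$; since $\gamma$ is connected it must meet a wormhole level of order $n$, so $[m,M]$ contains a wormhole level of each order in $D$ together with $h(x),h(y)$, giving $M-m\ge b-a$. It remains to show $\mathcal{H}^{1}(\gamma)\ge 2(b-a)-(h(y)-h(x))$; the idea is that a path which descends below $h(x)$ (or climbs above $h(y)$) to reach such a wormhole level must come back, and the descending and returning branches lie on genuinely different fibres (the excursion carries out a net toggle), so via the coarea identity $\mathcal{H}^{1}(\gamma)=\int_{I}\#\bigl(\gamma\cap h^{-1}(r)\bigr)\,\mathrm{d}r$ the fibre-count is at least $2$ over the height interval the path is forced to traverse outside $[x_{1},y_{1}]$ and at least $1$ in between, which totals $2(b-a)-(h(y)-h(x))$. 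Combined with the upper bound, $d(x,y)=2(b-a)-(h(y)-h(x))$, so $p$ is a geodesic; and any geodesic must attain equality in every inequality above, forcing $M-m=b-a$ and a height profile that decreases from $x_{1}$ to $m$, increases to $M$, then decreases to $y_{1}$, i.e., the stated shape with some $[a',b']$, $b'-a'=b-a$.

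I expect the main obstacle to be the multiplicity estimate in the lower bound: one must verify carefully that a geodesic cannot economize by reusing a single fibre during the forced excursions, treat the general case in which $D$ is infinite and several orders demand excursions on both sides of $[x_{1},y_{1}]$, and handle the non-uniqueness of the minimizing interval $[a,b]$ (only its length is determined) together with the attainability of the infimum, which holds because the high-order wormhole levels are dense in $I$. By contrast, the combinatorics of the identifications causes no trouble, exactly because a wormhole of order $n$ lets one toggle the $n$-th entry of the coding from any state.
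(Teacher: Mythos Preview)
The paper does not give its own proof of this proposition: it is quoted as \cite[Proposition 1.1]{Laa00} from Laakso's original paper and used as a black box, so there is nothing in the paper to compare your argument against.

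That said, your sketch follows the standard route and is essentially correct. One clarification worth making: for the lower bound you invoke the coarea identity and then worry that the excursions below $h(x)$ or above $h(y)$ must lie on ``genuinely different fibres'' so that the count is at least $2$. This is unnecessary if you work with parametrized length: the intermediate value theorem applied to the continuous function $h\circ\gamma$ already gives $\#\{t:h(\gamma(t))=r\}\ge 2$ for every $r\in[m,h(x))\cup(h(y),M]$, regardless of whether the Cantor coordinates coincide. The paper's definition of $d$ is phrased via $\mathcal{H}^{1}$ of a lifted \emph{set} $p\subset I\times K$, which superficially raises your concern, but the infimum over such sets agrees with the infimum of parametrized lengths (one can always pass to an injective arc inside $q(p)$), so your anticipated obstacle does not actually arise. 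The equality analysis also goes through: when $h(x)<h(y)$, an up--down--up height profile would force at least three crossings of every height in $(h(x),h(y))$, which is incompatible with equality, so the down--up--down shape is forced; when $h(x)=h(y)$ both orders realize the same length, consistent with the statement.
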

	
	We will call the interval $[a,b]$  in Proposition \ref{prop_geodesic} a \emph{minimal height interval} for $x$ and $y$. If $h(x)\geq h(y)$ we use the same terminology, but the geodesic will begin at $y$, then go down to height $a$, then up to height $b$, then down to $x$. The following Proposition is {\cite[Proposition 1.2]{Laa00}}. It relates minimal height intervals to the distance between points.
	
	\begin{proposition}\label{prop_distance}
		Let $x,y\in F$ with a minimal height interval $[a,b]$. Then
		\[
		d(x,y) = 2b-2a-|h(x)-h(y)|.
		\]
	\end{proposition}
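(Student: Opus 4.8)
The plan is to read off $d(x,y)$ directly as the length of the geodesic exhibited in Proposition \ref{prop_geodesic}. I would first assume that $h(x)\le h(y)$; the case $h(x)\ge h(y)$ is entirely symmetric, since the geodesic then starts at $y$ but the computation below and the final (symmetric) formula are unchanged. Because $[a,b]$ is a minimal height interval it contains both heights, so $a\le h(x)\le h(y)\le b$.

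By Proposition \ref{prop_geodesic} there is a geodesic $p$ from $x$ to $y$ of the stated form: from $x$ down to height $a$, then up to height $b$, then down to $y$. Being a geodesic, $\mathcal{H}^{1}(p)=d(x,y)$. I would then argue that, up to an $\mathcal{H}^{1}$-null set, $p$ is the disjoint union of three vertical segments in $I\times K$: one between heights $a$ and $h(x)$, one between heights $a$ and $b$, and one between heights $h(y)$ and $b$. Indeed, the only moves in $p$ that are not vertical are the passages through wormholes at the levels $a$ and $b$, and each such passage alters only the $K$-coordinate at a fixed height, hence contributes nothing to $\mathcal{H}^{1}$. Summing the lengths of the three segments gives
\[
d(x,y) = (h(x)-a)+(b-a)+(b-h(y)) = 2b-2a+h(x)-h(y) = 2b-2a-|h(x)-h(y)|,
\]
where the last equality uses $h(x)\le h(y)$. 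Since any minimal height interval $[a',b']$ satisfies $b'-a'=b-a$ by Proposition \ref{prop_geodesic} and still contains $h(x)$ and $h(y)$, the right-hand side does not depend on the chosen minimal height interval, so the statement is well posed.

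The only genuinely delicate point is the additivity of $\mathcal{H}^{1}$ over the three pieces of $p$: one must check that the three vertical segments are pairwise disjoint apart from their endpoints and that the wormhole passages are $\mathcal{H}^{1}$-negligible. This follows from the explicit description of paths in $I\times K$ together with the fact that each wormhole level sits at a single height $w(m_1,\dots,m_n)$, so the jumps between Cantor positions occur at only countably many heights. Once this bookkeeping is carried out there is nothing further to optimize, as Proposition \ref{prop_geodesic} already supplies a geodesic of exactly the required shape; I expect this to be the main, and only mild, obstacle.
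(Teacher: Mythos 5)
The paper does not actually prove this proposition: it is quoted verbatim from Laakso's paper as \cite[Proposition 1.2]{Laa00}, so there is no internal argument to compare against. Your derivation from Proposition \ref{prop_geodesic} is correct and is the natural (and presumably the original) one: the geodesic's length is the sum of the heights traversed in its three monotone phases, $(h(x)-a)+(b-a)+(b-h(y))$, and wormhole passages contribute nothing to $\mathcal{H}^{1}$ because in $I\times K$ they amount to switching Cantor columns at a single height, adding only finitely or countably many points to $p$. Two bookkeeping remarks. First, the wormhole passages are not confined to the levels $a$ and $b$: the upward phase generally jumps at needed wormhole levels spread throughout $[a,b]$; this is a minor misstatement in your write-up and does not affect the length count. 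Second, the disjointness you flag as the delicate point does hold, but for a slightly more specific reason than you give: when $a<h(x)$, minimality of $[a,b]$ forces $a$ itself to be a needed wormhole level, so the upward phase leaves the column $\{x_2\}$ immediately upon turning at height $a$, and since distinct jump levels change the $K$-coordinate by $\pm 2/3^{n}$ with distinct $n$ (increments that cannot cancel), the upward phase never re-enters the column $\{x_2\}$ over the overlapping height range; the symmetric argument applies at the top. With that verified, $\mathcal{H}^{1}(p)$ is the sum of the three segment lengths and the formula follows.
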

	
	Let $Q:=1+ (\ln 2/\ln 3)$. It is shown in \cite{Laa00} that $F$ is Ahflors $Q$-regular with respect to the metric $d$. This means there exists a constant $C\geq 1$ such that
	\[C^{-1}R^{Q}\leq \mathcal{H}^{Q}(B(x,R))\leq CR^{Q}\]
	for all $x\in F$ and $R>0$.
	
	\begin{definition}
		Laakso space is the set of equivalence classes $F:=(I\times K)/\sim$ equipped with the metric $d$ and Hausdorff dimension $\mathcal{H}^{Q}$.
	\end{definition} 
	
	\subsection{Differentiability of Functions on the Laakso Space}
	
	We now define what we mean by directional differentiability and differentiability.
	
	\begin{definition}\label{def_vert_deriv}
		Let $f\colon F\to \bbR$ and $x=[x_{1},x_{2}]\in F$. 
		
		Suppose $x$ is not a wormhole. Whenever the limit exists, we define
		\begin{equation}\label{limit_vert_deriv}
			f_{I}(x):=\lim_{t\to 0} \frac{f[x_{1}+t,x_{2}]-f[x_{1},x_{2}]}{t}.
		\end{equation}
		The limit is one-sided if $x_{1}=0$ or $1$.
		
		Suppose $x$ is a wormhole of order $n$ and $(x_{1},x_{2})\in I\times K$ is the representative of $x$ with the smaller value of $x_{2}$. Whenever the limit exists, we define
		\begin{align*}
			f_{L}(x)&:=\lim_{t\to 0} \frac{f[x_{1}+t,x_{2}]-f[x_{1},x_{2}]}{t}\\
			f_{R}(x)&:=\lim_{t\to 0} \frac{f[x_{1}+t,x_2+(2/3^n)]-f[x_{1},x_2+(2/3^n)]}{t}.
		\end{align*}
		If $f_{L}(x)$ and $f_{R}(x)$ exist and are equal, we say that $f_{I}(x)$ exists and define it to be the common value. The limits are one-sided if $x_{1}=0$. 
	\end{definition}
	
	\begin{definition}\label{differentiability}
		Let $f\colon F\to \bbR$ and $x\in F$. We say that $f$ is differentiable at $x$ if there exists $Df(x)\in \bbR$ such that
		\[ \lim_{y\to x} \frac{f(y)-f(x)-Df(x)(h(y)-h(x))}{d(y,x)}=0.\]
	\end{definition}
	
	We will study the relationship between directional derivatives and differentiability in Laakso space. The following definition of maximal directional derivatives is motivated by Proposition \ref{Lipissup}.
	
	\begin{definition}\label{defM}
		Let $f\colon F\to \bbR$ be Lipschitz and $x\in F$. Suppose $f_{I}(x)$ exists and $|f_{I}(x)|= \mathrm{Lip}(f)$. Then we say that $f$ has a \emph{maximal directional derivative} at $x$.
		
		We define $M$ to be the set of $x\in F$ for which the following implication holds true. Suppose a Lipschitz map $f\colon F\to \bbR$ has a maximal directional derivative at $x$. Then $f$ is differentiable at $x$.
	\end{definition}
	
	The following definition will be helpful in investigating $M$. Recall $\inf \varnothing=\infty$.
	
	\begin{definition}\label{defS}
		For any $t\in (0,1)$ and $n\in \bbN$, we define
		\begin{equation}\label{dplus}
			D_{n}^{+}(t):=\inf \{s>0: t+s\in J_{n} \},
		\end{equation}
		\begin{equation}\label{dminus}
			D_{n}^{-}(t):=\inf \{s>0: t-s\in J_{n} \}.	
		\end{equation}
		We define $S$ to be the set of $t\in (0,1)$ for which there is $C(t)\geq 1$, $N(t)\in \bbN$ so that
		\begin{equation}\label{fractionpm}
			C(t)^{-1}\leq \frac{D_{n}^{+}(t)}{D_{n}^{-}(t)} \leq C(t) \qquad \mbox{for } n\geq N(t).
		\end{equation}
	\end{definition}
	
	For any $t\in (0,1)$, $D_{n}^{+}(t)$ and $D_{n}^{-}(t)$ are non-zero and are finite for sufficiently large $n$. Notice that $S\neq \varnothing$; for instance $J_{n}\subset S$ for all $n\geq 1$.
	
	Laakso space is a PI space, so admits a differentiable structure of charts with respect to which Lipschitz functions are almost everywhere differentiable \cite{Che99, Laa00}. It seems to be understood by researchers in the field that one can choose a single chart consisting of the whole Laakso space together with the height map, giving the definition of differentiability in Definition \ref{differentiability}. However, we were unable to find any clear proof in the literature. Since these ideas are important motivation for the present paper, we justify this by proving the following theorem in the Section \ref{RademacherProof}.
	
	\begin{theorem}\label{Rademacher}
		Every Lipschitz function $f\colon F\to \bbR$ is differentiable almost everywhere.
	\end{theorem}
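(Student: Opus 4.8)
The plan is to construct the derivative by hand and verify Definition \ref{differentiability} directly, exploiting the explicit geodesic structure of $F$ rather than the abstract differentiation theory of PI spaces (that route would instead require identifying the Cheeger chart; the direct argument keeps the role of geodesics visible). First I would identify the measure: the wormhole levels form a countable family of horizontal slices, each null for $\mathcal{L}^{1}\times\nu$ on $I\times K$, where $\nu$ is the natural self-similar measure on $K$; hence $q$ is injective off a null set and, comparing Ahlfors $Q$-regular measures, $\calH^{Q}$ coincides up to a constant with $q_{*}(\mathcal{L}^{1}\times\nu)$, so we may work on $I\times K$ with the product measure. By Propositions \ref{prop_geodesic} and \ref{prop_distance} the minimal height interval of two points on a common vertical fibre is just the interval spanned by their heights, so each fibre $t\mapsto[t,x_{2}]$ is isometric to an interval and $t\mapsto f[t,x_{2}]$ is $\lip(f)$-Lipschitz. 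The classical Rademacher theorem and Fubini then give that $f_{I}(x)$ exists for $\calH^{Q}$-a.e.\ $x$ (such an $x$ being interior in height and not a wormhole), defining a bounded measurable function; I would take $Df:=f_{I}$, which is the only possible choice since testing Definition \ref{differentiability} with $y$ on the fibre of $x$ forces it.

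The second step is a key identity. Fix such an $x$ and let $y\to x$. By Proposition \ref{prop_geodesic} a geodesic $\gamma$ from $x$ to $y$ is a ``down--up--down'' concatenation of vertical fibre segments joined by wormhole crossings, with $\calH^{1}(\gamma)=d(x,y)$ and net height change $h(y)-h(x)$. Since $f$ is absolutely continuous on each fibre with derivative $f_{I}$ and is continuous across the (countably many) crossings, differentiating $f$ along an arclength parametrisation of $\gamma$ and integrating shows that $f(y)-f(x)-f_{I}(x)(h(y)-h(x))$ is the integral along $\gamma$ of $f_{I}-f_{I}(x)$ taken with sign according to the direction of travel; in particular
\[
\big|\,f(y)-f(x)-f_{I}(x)(h(y)-h(x))\,\big|\;\le\;\int_{\gamma}|f_{I}-f_{I}(x)|\dd\calH^{1}.
\]
Therefore it suffices to show that for $\calH^{Q}$-a.e.\ $x$ one has $d(x,y)^{-1}\int_{\gamma}|f_{I}-f_{I}(x)|\dd\calH^{1}\to 0$ as $y\to x$, uniformly over geodesics $\gamma$ joining $x$ to $y$.

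The third step makes this geodesic average vanish, and rests on two structural features. First, a ``down--up--down'' geodesic covers each height with multiplicity at most two, and for $d(x,y)=r$ the whole of $\gamma$ lies in the height window $(h(x)-r,h(x)+r)$ with its Cantor coordinate confined to a cylinder $K_{a}$ of diameter $\lesssim r$ around $x_{2}$. Second, every wormhole crossing is a $\nu$-preserving bijection between Cantor sub-blocks. Averaging the fibre-integrals over the initial Cantor coordinate (using the second feature) and recognising, via the first feature and Ahlfors regularity, that $(h(x)-r,h(x)+r)\times K_{a}$ is comparable to $B(x,Cr)$, one bounds the $\nu$-average over $K_{a}$ of the oscillation quantity by a constant times $r\,\dashint_{B(x,Cr)}|f_{I}-c|\dd\calH^{Q}$ with $c$ a suitable local average of $f_{I}$; Lebesgue differentiation of $f_{I}$ with respect to $\calH^{Q}$ then drives this to $o(r)$. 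This yields the desired estimate only \emph{on average} over the Cantor fibre; the pointwise version, at $\nu$-a.e.\ Cantor coordinate over $\mathcal{L}^{1}$-a.e.\ height, is recovered by a Borel--Cantelli argument over dyadic scales combined with a Vitali-type covering, in the spirit of the doubling arguments of \cite{BS13}. I expect this last passage---from the averaged estimate to the pointwise one at $\calH^{Q}$-a.e.\ point---to be the main obstacle; the earlier steps are bookkeeping with Propositions \ref{prop_geodesic} and \ref{prop_distance}. Granting it, the key identity together with fibrewise differentiability shows that $f$ is differentiable at $\calH^{Q}$-a.e.\ point, which is Theorem \ref{Rademacher}.
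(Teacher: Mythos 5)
Your first two steps agree with the paper's strategy: the measure identification, the fibrewise Rademacher plus Fubini argument giving $f_{I}$ almost everywhere, and the reduction of differentiability at $x$ to controlling $f$ along the countably many straight segments of a geodesic from $x$ to $y$ (the paper's telescoping sum $f(y)-f(x)=\sum_i (f(p_i)-f(p_{i+1}))$ is exactly your ``key identity''). The problem is your third step. Differentiability at $x$ requires the estimate for \emph{every} $y$ near $x$, and for a given $y$ the geodesics from $x$ to $y$ are forced through an essentially rigid, countable family of vertical fibres (those of the form $x_{2}+\sum\pm 2/3^{N_i}$). A countable family of fibres is $\nu$-null in $K$, so a statement of the form ``at $\nu$-a.e.\ Cantor coordinate over $\mathcal{L}^{1}$-a.e.\ height the fibre oscillation of $f_{I}$ is small'' — which is what your averaging, Borel--Cantelli and Vitali argument would deliver — says nothing about the specific fibres your geodesics actually traverse. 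Lebesgue differentiation of $f_{I}$ with respect to $\calH^{Q}$ likewise gives no control of $f_{I}$ restricted to a single fibre, since each fibre is $\calH^{Q}$-null. So the passage you flag as ``the main obstacle'' is not a technical refinement but a genuine gap: the quantity $\sup_{\gamma}d(x,y)^{-1}\int_{\gamma}|f_{I}-f_{I}(x)|\dd\calH^{1}$ cannot be shown to vanish by averaging over Cantor coordinates.

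The paper circumvents this by \emph{not} integrating $f_{I}-f_{I}(x)$ along the actual geodesic. It introduces the sets $E_{k}^{1}(\varepsilon)$ (points whose own vertical fibre has, in $\mathcal{L}^{1}$-density, small oscillation of $f_{I}$) and $E_{k}^{2}(\varepsilon)$ (points near which $D_{\varepsilon}(x)\cap E_{k}^{1}(\varepsilon)$ has almost full $\calH^{Q}$-measure in balls), both of full measure. For each straight segment $[p_{i+1},p_{i}]$ of the geodesic it then perturbs sideways, by a displacement of order $\varepsilon\mu_{i+1}$ realized through very fine wormholes, to a parallel segment starting at a point $q_{i+1}\in E_{K}^{1}(\varepsilon)\cap D_{\varepsilon}(x)$; such a point exists precisely because the good set fills more than half of $B(p_{i+1},\varepsilon\mu_{i+1})$. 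The increment of $f$ is estimated along this good fibre, and the cost of the perturbation is absorbed by the Lipschitz constant times the summable displacements $\sum_i\mu_{i+1}\le 11\,d(x,y)$. If you want to complete your argument you should replace your averaging step by this perturbation-to-a-good-fibre device; as written, your proof does not close.
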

	
	We do not claim that Theorem \ref{Rademacher} is new or that the proof is original. Some of the ideas for the proof used come from the paper \cite{BS13} and from discussions of the third author with David Bate and David Preiss.
	
	\subsection{Porous Sets}
	
	We now define porous sets. They provide a way to say a set is small or exceptional in a very strong sense.
	
	\begin{definition}\label{porous}
		A set $P$ in a metric space $(X,d)$ is porous if there exists $0<\rho<1$ such that for all $x\in P$ and $\delta>0$, there exists $y\in X$ with $d(y,x)<\delta$ such that
		\[B(y,\rho d(y,x))\cap P=\varnothing.\]
		A set is called $\sigma$-porous if it is a countable union of porous sets.
	\end{definition}
	
	Clearly porosity is sensitive to the choice of metric. Unless otherwise stated, we will use Euclidean distance on $I$ and the metric $d$ on $F$.
	
	Recall that a subset of a metric space is of first category or meager if it is a countable union of nowhere dense sets. A property of points in a metric space holds for typical points if the set where it does not hold is of first category. Clearly every porous set is nowhere dense and every $\sigma$-porous set is of first category. In the case of a metric measure space $(X,d,\mu)$ equipped with a doubling measure $\mu$, including the Laakso space $F$, porous sets have measure zero. This is well known. For an explicit proof one could follow the steps in \cite{PS17}, which do not rely on the Carnot group structure in that paper.
	
	\section{Maximal Directional Derivatives and Differentiability}\label{Maximaldirectional}
	
	In this section we classify geometrically the set of points where maximality of a directional derivative implies differentiability. We then show this set is $\sigma$-porous, so intuitively the set is very small.
	
	\subsection{Directional Derivatives}
	
	We first show that differentiability is a stronger requirement than directional differentiability as one would expect.
	
	\begin{lemma}\label{lemma_deriv_implies_diff}
		If a function $f\colon F\to \bbR$ is differentiable at a point $x\in F$ with derivative $Df(x)$, then $f_{I}(x)$ exists and equals $Df(x)$.
		
		For any $x\in F$, there exists a Lipschitz function $f\colon F\to \bbR$ such that $f_{I}(x)$ exists but $f$ is not differentiable at $x$.
	\end{lemma}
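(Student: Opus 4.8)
The plan is to treat the two statements separately: the first is a short computation once one picks the right test points, and the second rests on an explicit construction plus a single genuinely non-Euclidean geometric fact.

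For the first statement, the point I would exploit is that a segment lying inside one fibre $\{x_2\}\times I$ is a geodesic of length equal to the height difference of its endpoints. Concretely, for $x=[x_1,x_2]$ and small $t$ the path $s\mapsto[x_1+s,x_2]$ joins $x$ to $[x_1+t,x_2]$ without using any wormhole level, so $[\min(x_1,x_1+t),\max(x_1,x_1+t)]$ is a minimal height interval and Proposition~\ref{prop_distance} gives $d([x_1+t,x_2],x)=|t|$. Feeding $y=[x_1+t,x_2]$ into Definition~\ref{differentiability}, the relations $h(y)-h(x)=t$ and $d(y,x)=|t|$ collapse the differentiability condition to $\big(f[x_1+t,x_2]-f[x_1,x_2]-Df(x)\,t\big)/|t|\to0$; separating $t>0$ and $t<0$ then yields $\lim_{t\to0}\big(f[x_1+t,x_2]-f[x_1,x_2]\big)/t=Df(x)$. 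If $x$ is not a wormhole this is exactly $f_I(x)=Df(x)$; if $x$ is a wormhole of order $n$ I would run the same argument along each of the two fibre representatives $(x_1,x_2)$ and $(x_1,x_2+2/3^n)$ of $x$ — both lie in genuine fibres, so the distance is again $|t|$ — obtaining $f_L(x)=f_R(x)=Df(x)$ and hence $f_I(x)=Df(x)$. (When $x_1\in\{0,1\}$ all limits are one-sided, which changes nothing.)

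For the second statement, fix $x\in F$ and take
\[f(y):=d(y,x)-|h(y)-h(x)|,\]
which is a difference of two $1$-Lipschitz functions, hence $2$-Lipschitz, and is $\ge 0$ because $h$ is $1$-Lipschitz. By the computation just made, $f$ vanishes identically along the fibre(s) of $x$, so $f_I(x)$ exists and equals $0$. To show $f$ is not differentiable at $x$ it is enough to produce $y_k\to x$ along which the differentiability quotient does not tend to $0$; I would take $n_k\to\infty$ and set $y_k:=[x_1,x_2']$, where $x_2'=x_2\pm 2/3^{n_k}\in K$ is the point identified with $x_2$ at every wormhole level of order $n_k$. Since $h(y_k)=h(x)$ we get $f(y_k)=d(y_k,x)$, so that for \emph{every} candidate derivative $c$,
\[\frac{f(y_k)-f(x)-c\,(h(y_k)-h(x))}{d(y_k,x)}=1\qquad\text{for all }k,\]
and once $y_k\to x$ is established this rules out differentiability at $x$.

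The crux — and the only step with no Euclidean or Carnot analogue — is the claim $d(y_k,x)\to0$: there exist points distinct from $x$, at the same height as $x$, yet arbitrarily close to it. The mechanism is that a single crossing of a wormhole level of order $n_k$ already connects the fibre of $x_2$ to that of $x_2'$, so descending (or ascending) from $x$ to the nearest $w\in J_{n_k}$, passing through the wormhole, and returning to height $x_1$ is a path of length $2\,\mathrm{dist}(x_1,J_{n_k})$; since the finite base-$3$ description of $J_{n_k}$ shows $J_{n_k}$ is $3^{-n_k}$-dense in $[0,1]$, this gives $d(y_k,x)\le 2\cdot 3^{-n_k}\to0$. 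I would also note $d(y_k,x)>0$, which holds because $x_2\neq x_2'$ and, after choosing each $n_k$ larger than the order of $x$ when $x$ happens to be a wormhole, the points $x$ and $y_k$ are not identified; no lower bound on $d(y_k,x)$ is needed. Assembling these observations finishes the proof.
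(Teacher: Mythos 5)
Your proof is correct. The first half coincides with the paper's argument: both evaluate the differentiability quotient along the fibre of $x$, where $d([x_1+t,x_2],x)=|t|$ (lower bound because $h$ is $1$-Lipschitz, upper bound from the fibre path), and handle wormholes by running the computation on each of the two representatives to get $f_L=f_R=Df(x)$.

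For the second half you take a genuinely different and arguably cleaner route. The paper defines a function on the set $A=V\cup\{y_n\}$ consisting of the vertical line through $x$ together with the horizontally displaced points $y_n$, sets $f|_V=0$ and $f(y_n)=\min(D_n^+(x_1),D_n^-(x_1))$, verifies by hand that this is $1$-Lipschitz on $A$ (the bulk of the proof, requiring case analysis of geodesics through $u_n$ or $d_n$), and then invokes a Lipschitz extension to all of $F$. You instead use the globally defined function $f=d(\cdot,x)-|h(\cdot)-h(x)|$, which is automatically $2$-Lipschitz as a difference of $1$-Lipschitz functions, vanishes on the fibre(s) of $x$ so that $f_I(x)=0$, and equals $d(y_k,x)$ at your test points $y_k=[x_1,x_2\pm 2/3^{n_k}]$, forcing the differentiability quotient to equal $1$ for every candidate derivative. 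This eliminates both the Lipschitz verification and the extension step; the only work left is the geometric fact that $d(y_k,x)\le 2\operatorname{dist}(x_1,J_{n_k})\to 0$ while $y_k\neq x$ (your care in choosing $n_k$ different from the order of $x$ when $x$ is a wormhole is exactly the point that needs checking, and your argument for it is sound). What the paper's more laborious construction buys is that it serves as a template for the harder construction in Proposition 3.7, where the function must in addition have a maximal directional derivative; your distance-based function, having $f_I(x)=0\neq\mathrm{Lip}(f)$, would not adapt to that setting. As a proof of the lemma as stated, though, your argument is complete.
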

	
	\begin{proof}
		For the first part, first suppose that $f$ is differentiable at $x = [x_1,x_2]\in F$ with derivative $Df(x)$ and assume that $x$ is not a wormhole. Then we have
		\begin{align*}
			0 &=\lim_{t\to 0}\frac{f[x_1+t,x_2]-f(x)-Df(x)(h[x_1+t,x_2]-h(x))}{d([x_1+t,x_2],x)}\\
			& = \lim_{t\to 0}\frac{f[x_1+t,x_2]-f(x) - t\cdot Df(x)}{t}\\
			& =\lim_{t\to 0} \frac{f[x_1+t,x_2]-f(x)}{t}-Df(x).
		\end{align*}
		Hence $f_{I}(x)$ exists and equals $Df(x)$. The case in which $x$ is a wormhole is done by computing $f_{L}$ and $f_{R}$ separately.
		
		For the second part, first fix $x\in F$ and assume $x_{1}\notin \{0, 1\}$. Let $V$ denote the line through $x$ in the $I$ direction. Recall the definitions of $D_{n}^{+}$ and $D_{n}^{-}$ from \eqref{dplus} and \eqref{dminus}. Fix $N\in \bbN$ sufficiently large that $D_{n}^{+}(x_{1}),D_{n}^{-}(x_{1})$ are finite for all $n\geq N$. If $x$ is a wormhole, so that $x_{1}\in J_{M}$ for some $M$, then we additionally choose $N$ such that $N>M$.   Next, for each $n\geq N$ we define the following points:
		\begin{itemize}
			\item $u_{n}$ is the point vertically above $x$ at a vertical distance $D_{n}^{+}(x_{1})$, 
			\item $d_{n}$ is the point vertically below $x$ at a vertical distance $D_{n}^{-}(x_{1})$,
			\item $y_{n}$ is the point obtained by starting at $x$, travelling up to $u_{n}$, using the wormhole to jump to the identified point, then travelling back down to the point with the same height as $x$.
		\end{itemize}
		Let $A=V\cup \{y_{n}: n\geq N\}$ and define $f\colon A\to \bbR$ by
		\[f|_{V}=0 \qquad \mbox{and} \qquad f(y_{n})=\min(D_{n}^{+}(x_{1}), D_{n}^{-}(x_{1})) \mbox{ for }n\geq N.\]
		Clearly the directional derivative $f_{I}(x)$ exists and equals $0$.
		
		Notice $d(x,y_{n})=\min(2D_{n}^{+}(x_{1}),2D_{n}^{-}(x_{1}))$ for all $n\geq N$. This gives
		\[\frac{f(y_{n})-f(x)}{d(y_{n},x)}=\frac{\min(D_{n}^{+}(x_{1}), D_{n}^{-}(x_{1}))}{\min(2D_{n}^{+}(x_{1}), 2D_{n}^{-}(x_{1}))}=\frac{1}{2}\not\to 0.\]
		Hence $f$ is not differentiable at $x$.
		
		To see that $f$ is Lipschitz it will suffice to estimate the values of  $|f(y_n)-f(u_n)|$, $|f(y_n)-f(d_n)|$, and $|f(y_n)-f(y_m)|$ for $n, m\geq N$. First notice that for $n\geq N$.
		\[|f(y_n)-f(u_n)|=\min(D_{n}^{+}(x_{1}), D_{n}^{-}(x_{1}))\leq D_{n}^{+}(x_1)=d(y_n, u_n)\]
		and
		\[|f(y_n)-f(d_n)|=\min(D_{n}^{+}(x_{1}), D_{n}^{-}(x_{1}))\leq D_{n}^{-}(x_1)=d(y_n, d_n).\]
		Now suppose $n, m\geq N$. Notice that we can choose a geodesic from $y_{n}$ to $y_{m}$ which passes either through $u_{n}$ or through $d_{n}$. Suppose one passes through $u_{n}$. Then
		\begin{align*}
			|f(y_{n})-f(y_{m})|&\leq |f(y_{n})-f(u_{n})|+|f(u_{n})-f(y_{m})|\\
			&\leq d(y_{n},u_{n})+d(u_{n},y_{m}).\\
			&=d(y_{n},y_{m}).
		\end{align*}
		A similar argument applies if the geodesic passes through $d_{n}$ rather than $u_{n}$. This shows that $f\colon A\to \bbR$ is $1$-Lipschitz. Extending $f$ to a Lipschitz function on $F$ proves the second part of the lemma in the case $x_{1}\notin \{0,1\}$. The proof is similar if $x_{1}=0$ or $x_{1}=1$, with adjustments to make the construction one-sided.
	\end{proof}
	
	We next show the Lipschitz constant can be recovered as the supremum of directional derivatives. This justifies our definition of maximal directional derivative.
	
	\begin{proposition}\label{Lipissup}
		Let $f\colon F\to \bbR$ be Lipschitz. Then
		\[\mathrm{Lip}(f)=\sup \left\{ |f_{I}(x)|: x\in F \mbox{ and }f_{I}(x) \mbox{ exists} \right\}.\]
	\end{proposition}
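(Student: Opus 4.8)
The plan is to prove the two inequalities between $\mathrm{Lip}(f)$ and $\Sigma:=\sup\{|f_I(x)|:x\in F,\ f_I(x)\text{ exists}\}$ separately. The bound $\Sigma\le\mathrm{Lip}(f)$ is immediate: for $x=[x_1,x_2]$ the vertical segment of length $|t|$ from $[x_1,x_2]$ to $[x_1+t,x_2]$ is a geodesic (its length is $|t|$, and $d\ge|h([x_1+t,x_2])-h([x_1,x_2])|=|t|$), so each difference quotient in \eqref{limit_vert_deriv} has absolute value at most $\mathrm{Lip}(f)$; the same applies to the quotients defining $f_L(x)$ and $f_R(x)$ when $x$ is a wormhole. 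Letting $t\to0$ gives $|f_I(x)|\le\mathrm{Lip}(f)$, hence $\Sigma\le\mathrm{Lip}(f)$.

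For the reverse inequality it suffices to show $|f(x)-f(y)|\le\Sigma\,d(x,y)$ for all $x,y\in F$. Fix $x,y$, put $L=d(x,y)$, and let $\gamma\colon[0,L]\to F$ be a geodesic from $x$ to $y$ parametrised by arclength. Then $g:=f\circ\gamma$ is $\mathrm{Lip}(f)$-Lipschitz on $[0,L]$, hence absolutely continuous, so $|f(x)-f(y)|=|g(0)-g(L)|\le\int_0^L|g'(s)|\dd s$. It remains to prove that $|g'(s)|\le\Sigma$ for almost every $s$.

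By Proposition~\ref{prop_geodesic}, $h\circ\gamma$ is affine with slopes $-1$, then $+1$, then $-1$ (some pieces possibly degenerate), so $\gamma$ is a concatenation of at most three height-monotone pieces joined at at most two turning points; along each monotone piece $\gamma$ runs along vertical segments, on which the Cantor coordinate is constant, separated by wormhole jumps. A jump does not change the point of $F$, occurs at a height lying in the countable set $\bigcup_n J_n$, and --- since $h\circ\gamma$ is strictly monotone along each piece --- the jumps of a fixed piece occur at a monotone sequence of heights, hence accumulate at most at one point of that piece. Thus the set $E\subset[0,L]$ consisting of the turning points, the countably many jump times, and the at most three accumulation points of jump times is countable, and on each component $(\alpha,\beta)$ of $[0,L]\setminus E$ we have $\gamma(s)=[h(\gamma(s)),k_0]$ for a fixed $k_0\in K$, with $h\circ\gamma$ affine of slope $\pm1$ there. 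Consequently, for $s\in(\alpha,\beta)$ at which $g'(s)$ exists and $\gamma(s)$ is not a wormhole, the difference quotients of $g$ at $s$ coincide up to sign with those of $t\mapsto f[h(\gamma(s))+t,k_0]$, so by the very definition of $f_I$ the limit $f_I(\gamma(s))$ exists and $|g'(s)|=|f_I(\gamma(s))|\le\Sigma$. Since $E$ is countable, since $\gamma(s)$ is a wormhole only for $s$ in the countable preimage of $\bigcup_n J_n$ under the piecewise strictly monotone map $h\circ\gamma$, and since $g$ is differentiable almost everywhere (being Lipschitz), the inequality $|g'(s)|\le\Sigma$ holds for almost every $s\in[0,L]$. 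Therefore $|f(x)-f(y)|\le\Sigma L=\Sigma\,d(x,y)$, and taking the supremum over $x\neq y$ gives $\mathrm{Lip}(f)\le\Sigma$. Combined with the first paragraph this proves the proposition.

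The main obstacle is the structural statement about geodesics in the third paragraph. A geodesic can make infinitely many wormhole jumps --- this already happens when the Cantor coordinates of $x$ and $y$ differ in infinitely many ternary positions --- so one genuinely has to exclude the possibility that these jumps accumulate along a set of positive measure. The point that makes this work is the monotonicity of $h\circ\gamma$ on each of the (at most three) pieces of the geodesic coming from Proposition~\ref{prop_geodesic}: it confines the jump heights of each piece to a monotone sequence, whose set of accumulation points is a single point. Beyond this, the argument uses only the one-dimensional fundamental theorem of calculus for Lipschitz functions and the elementary estimate of the first paragraph.
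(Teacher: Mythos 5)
Your overall strategy is exactly the paper's: the easy inequality via the difference quotients along vertical segments, and the reverse inequality by integrating $(f\circ\gamma)'$ along a geodesic written as a countable concatenation of segments in the $I$-direction. You are in fact more careful than the paper about the exceptional set (wormhole times, jump times, the identification $|g'(s)|=|f_I(\gamma(s))|$ only off a null set), which is a genuine improvement in rigor at that step.

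However, the structural claim you single out as the main obstacle is not correctly justified as stated. You take an \emph{arbitrary} geodesic $\gamma$ and argue that its jump times accumulate at only finitely many points because ``the jumps of a fixed piece occur at a monotone sequence of heights.'' Monotonicity of $h\circ\gamma$ only tells you that the set of jump heights is order-isomorphic (via time) to the set of jump times; it does not make that set a sequence of order type $\omega$, and so it does not bound the number of accumulation points. In fact there exist geodesics whose jump heights are dense in an interval: at any height $w\in J_n$ every point of $F$ at that height is a wormhole of level $n$, and the sign of the level-$n$ jump is determined by the $n$-th digit of the Cantor coordinate of the starting point, so each needed jump of level $N_i$ can be relocated to \emph{any} height of $J_{N_i}$ inside the traversed range without changing the endpoint or the length. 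Choosing the level-$N_i$ jump near the $i$-th rational produces a geodesic for which your set $E$ (once you include all accumulation points) contains a whole interval, and the decomposition into components on which the Cantor coordinate is constant breaks down. The fix is easy and is what the paper does implicitly when it ``chooses'' a geodesic that is a countable concatenation of closed vertical intervals: select the geodesic so that on each height-monotone piece the jump of level $N_i$ is performed at the first available height of $J_{N_i}$; since $J_{N_i}$ is $2/3^{N_i}$-dense, these jump heights then converge to the starting height of the piece, $E$ is countable and closed, and the rest of your argument goes through verbatim. So the proof is salvageable with a one-line modification (replace ``let $\gamma$ be a geodesic'' by this specific choice), but as written the key structural step fails for a general geodesic.
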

	
	\begin{proof}
		Temporarily define $\mathrm{Lip}_{D}(f)=\sup \{ |f_{I}(x)|: x\in F \mbox{ and }f_{I}(x) \mbox{ exists} \} $. Fix $x=[x_1,x_2]\in F$ such that $f_I(x)$ exists. If $x$ is not a wormhole, then
		\begin{align*}
			|f_I(x)| &= \left|\lim_{t\to 0}\frac{f[x_1 + t,x_2] - f(x)}{t}\right| \\ 
			&\leq \limsup_{t\to 0}\frac{\mathrm{Lip}(f) \cdot d([x_1+t,x_2],x)}{t}\\
			&= \mathrm{Lip}(f).
		\end{align*}
		The same argument applies when $x$ is a wormhole, since $f_I(x) = f_L(x) = f_R(x)$. This proves that $ \mathrm{Lip}_{D}(f) \leq \mathrm{Lip}(f)$.
		
		Next fix $x, y\in F$ and let $L=d(x,y)$. Choose a geodesic $\gamma\colon [0,L]\to F$ from $x$ to $y$ which is a concatenation of countably many lines in the $I$-direction. More precisely, there is a decomposition $[0,L]=\cup_{i=1}^{\infty}I_{i}$ so that for $i\geq 1$:
		\begin{itemize}
			\item $I_{i}\subset [0,L]$ are closed intervals overlapping only pairwise at endpoints.
			\item There are $a_{i}\in [0,1]$ and $x_{i}\in K$ so that $\gamma|_{I_{i}}(t)=[a_{i}\pm t, x_{i}]$.
		\end{itemize}
		Since $f\circ \gamma$ is absolutely continuous, we can estimate as follows:
		\begin{align*}
			|f(y)-f(x)|&=|f(\gamma(L))-f(\gamma(0))|\\
			&=\left|\int_{0}^{L} (f\circ \gamma)'(s) \dd s \right|\\
			&\leq L \sup \{|(f\circ \gamma)'(s)|: s\in (0,L) \mbox{ and }(f\circ \gamma)'(s)\mbox{ exists}\}\\
			&\leq L \mathrm{Lip}_{D}(f).
		\end{align*}
		Since $L=d(x,y)$, this gives $\mathrm{Lip}(f)\leq \mathrm{Lip}_{D}(f)$ and completes the proof.
	\end{proof}
	
	\subsection{Relationship Between $M$ and $S$}
	
	Recall the set $M$ from Definition \ref{defM} and the set $S$ from Definition \ref{defS}. To begin studying the relationship between them, we start with the following simple lemma.
	
	\begin{lemma}\label{analysis}
		The following statements hold.
		\begin{enumerate}
			\item For any $t\in (0,1)$, we have for all sufficiently large $n$
			\[D_{n}^{+}(t) \leq 2/3^{n} \qquad \mbox{and} \qquad D_{n}^{-}(t)\leq 2/3^{n}.\]
			\item For any $t\in S$, we have for all sufficiently large $n$
			\[D_{n}^{+}(t) \geq c(t)/3^{n} \qquad \mbox{and} \qquad D_{n}^{-}(t) \geq c(t) /3^{n},\]
			where $c(t)=1/(1+C(t))$.
			\item For any $t\in (0,1)\setminus S$, we have
			\[ \limsup_{n\to \infty} \frac{D_{n}^{+}(t)}{D_{n}^{-}(t)}=\infty \qquad \mbox{or} \qquad \limsup_{n\to \infty} \frac{D_{n}^{-}(t)}{D_{n}^{+}(t)}=\infty.\]
		\end{enumerate}
	\end{lemma}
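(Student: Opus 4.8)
\textbf{Proof proposal for Lemma \ref{analysis}.}

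The plan is to unwind the definitions of $D_n^{\pm}$ and of $S$ and to exploit the self-similar combinatorial structure of the wormhole levels $J_n$. For part (1), I would observe that $J_n$ consists of the heights $w(m_1,\dots,m_n)=\sum_{i=1}^n m_i/3^i$ with $m_i\in\{0,1,2\}$ and $m_n>0$. Given any $t\in(0,1)$, once $n$ is large enough we may truncate the base-$3$ expansion of $t$ at the $n$-th digit; the nearest height of the form $w(m_1,\dots,m_n)$ (with no constraint on $m_n$) lies within $1/3^n$ of $t$ from each side, since consecutive such dyadic-in-base-$3$ points are spaced $1/3^n$ apart. The constraint $m_n>0$ can cost us at most one extra step of size $1/3^n$, so there is a point of $J_n$ within $2/3^n$ on each side, giving $D_n^{\pm}(t)\le 2/3^n$ for all large $n$. (One should take $n$ large enough that $t$ is not too close to $0$ or $1$, so that one genuinely has room on both sides; this is the reason for the phrase ``for all sufficiently large $n$''.)

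For part (2), let $t\in S$ with constant $C(t)$ and threshold $N(t)$ as in \eqref{fractionpm}. For $n\ge N(t)$ combine the two inequalities $D_n^+(t)\le C(t)D_n^-(t)$ and $D_n^-(t)\le C(t)D_n^+(t)$ with part (1). From $D_n^+(t)+D_n^-(t)$ being the length of an interval around $t$ that contains a point of $J_n$ on each side, I would first argue $D_n^+(t)+D_n^-(t)\ge 1/3^n$: indeed the closest point of $J_n$ above $t$ and the closest below $t$ are distinct elements of $J_n$, and by the $m_n>0$ spacing structure any two distinct elements of $J_n$ on opposite sides of $t$ are at combined distance at least the minimal gap, which is $1/3^n$. (If instead one only gets $2/3^n$ as the relevant lower bound the constant $c(t)$ changes by a harmless factor; the key point is a lower bound of the form $\mathrm{const}/3^n$ on the sum.) Then $D_n^-(t)\ge \tfrac{1}{1+C(t)}\big(D_n^+(t)+D_n^-(t)\big)\ge \tfrac{1}{(1+C(t))3^n}$, using $D_n^+(t)\le C(t)D_n^-(t)$, and symmetrically for $D_n^+(t)$. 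This is exactly the claimed bound with $c(t)=1/(1+C(t))$.

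Part (3) is immediate from the definition of $S$: if $t\notin S$, then there is no pair $(C,N)$ for which \eqref{fractionpm} holds for all $n\ge N$. Negating the two-sided bound, for every $C\ge 1$ and every $N$ there is some $n\ge N$ with $D_n^+(t)/D_n^-(t)>C$ or $D_n^-(t)/D_n^+(t)>C$. A pigeonhole along a sequence $C\to\infty$ then forces at least one of the two ratios to have $\limsup=\infty$ (if both had finite $\limsup$, say bounded by $C_0$, then taking $C=C_0+1$ would contradict the failure of \eqref{fractionpm} for all large $n$). The main obstacle, such as it is, lies in part (2): one must be a little careful that the closest points of $J_n$ above and below $t$ really are at combined distance comparable to $3^{-n}$ and not accidentally much smaller because two near-identical heights from different binary strings $a$ happen to straddle $t$. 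But the normalization $m_n>0$ in the definition of wormhole levels rules this out, since it prevents a level of order $n$ from coinciding with (or lying arbitrarily close, at scale finer than $3^{-n}$, to) a level of lower order, and within a fixed order $n$ the admissible heights form a subset of $3^{-n}\mathbb{Z}$, hence are $3^{-n}$-separated. Once this separation is in hand, parts (1)--(3) are routine.
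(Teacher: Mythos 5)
Your proposal is correct and follows essentially the same route as the paper: part (1) via the observation that the $m_n>0$ constraint leaves gaps of at most $2/3^n$ between adjacent heights in $J_n$, part (2) via the key chain $1/3^n\le D_n^+(t)+D_n^-(t)\le (1+C(t))D_n^{\pm}(t)$ (your separation argument for the lower bound on the sum is exactly the point the paper leaves implicit), and part (3) by negating the definition of $S$. No gaps.
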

	
	\begin{proof}
		Statement (1) holds because adjacent elements of $J_{n}$ are separated by at most a distance $2/3^{n}$ away from heights $0$ and $1$; the factor $2$ is necessary because of the requirement $m_{n}>0$ in the definition of $J_{n}$. 
		
		Statement (2) follows from the estimate
		\[1/3^n \leq D_{n}^{+}(t)+D_{n}^{-}(t)\leq (1+C(t))D_{n}^{+}(t),\]
		which yields $D_{n}^{+}(t) \geq c(t)/3^{n}$ with $c(t)=1/(1+C(t))$. A similar argument yields the estimate for $D_{n}^{-}(t)$. 
		
		Statement (3) follows from negating the definition of $S$.
	\end{proof}
	
	
	\begin{proposition}\label{diff}
		Suppose $x=[x_{1},x_{2}] \in F$ with $x_{1}\in  S$. Then $x\in M$. In other words, every Lipschitz map $f\colon F\to \bbR$ with a maximal directional derivative at the point $x$ is also differentiable at $x$.
	\end{proposition}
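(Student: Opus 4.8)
Here is how I would go about it.

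\medskip

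\noindent\textbf{Normalisation and reformulation via the Lipschitz deficit.}
First I would normalise: rescaling $f$ we may assume $\mathrm{Lip}(f)=1$ (the case $\mathrm{Lip}(f)=0$ being trivial), and replacing $f$ by $-f$ if necessary we may assume $f_I(x)=1$, where $x=[x_1,x_2]$. It then has to be shown that $\big(f(y)-f(x)-(h(y)-h(x))\big)/d(y,x)\to 0$ as $y\to x$. The plan is to run everything through the ``Lipschitz deficit'': for $c\in K$ and $[u,v]\subset[0,1]$ put $\operatorname{def}_c([u,v]):=(v-u)-\big(f[v,c]-f[u,c]\big)=\int_u^v\big(1-\partial_t f[t,c]\big)\,dt$, a number in $[0,2(v-u)]$ since $f$ is $1$-Lipschitz along fibers. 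Fixing $y$ near $x$, write $r:=d(y,x)$ and take a geodesic $\gamma$ from $x$ to $y$ as in Proposition~\ref{prop_geodesic}: it descends from $x$ to height $a$, ascends to height $b$, and descends to $y$, where $[a,b]$ is a minimal height interval; by Proposition~\ref{prop_distance}, $r=2(b-a)-|h(x)-h(y)|$, so $h(x)-a\le r/2$, $b-h(y)\le r/2$, and $[a,b]\subset[x_1-r,x_1+r]$. Decomposing $\gamma$ into its (countably many) maximal fiber-segments $\Sigma$ separated by wormhole jumps, and using that jumps do not change the value of $f$ while $|\partial_t f|\le 1=|\partial_t h|$ along fibers, one obtains the key estimate $\big|f(y)-f(x)-(h(y)-h(x))\big|\le\sum_\Sigma\operatorname{def}(\Sigma)$, the deficits being taken over the corresponding fibers and height intervals. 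So it is enough to prove that the total deficit of $\gamma$ is $o(r)$ as $y\to x$.

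\medskip

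\noindent\textbf{The two mechanisms.}
Two inputs drive this. First, the deficit is negligible on the fiber through $x$: since $f_I(x)=1$ we have $f[x_1+s,x_2]-f(x)=s+o(s)$, hence $\Phi(\rho):=\sup\{\operatorname{def}_{x_2}([u,v]):[u,v]\subset[x_1-\rho,x_1+\rho]\}=o(\rho)$ as $\rho\to0^+$ — and likewise for the second representative of $x$, using $f_L(x)=f_R(x)=1$, if $x$ is a wormhole. Second, wormholes transfer the deficit: if $u,v\in J_n$ then $[u,c]\sim[u,\bar c]$ and $[v,c]\sim[v,\bar c]$ for the order-$n$ partner $\bar c=c\pm 2/3^n$ of $c$, whence $\operatorname{def}_c([u,v])=\operatorname{def}_{\bar c}([u,v])$; more generally $\operatorname{def}_c([u,v])\le\operatorname{def}_{\bar c}([u',v'])$ whenever $[u,v]\subset[u',v']$ with $u',v'\in J_n$, at the cost of enlarging $[u,v]$ by at most $2/3^n$ on each side (adjacent elements of $J_n$ being $\le 2/3^n$ apart, by Lemma~\ref{analysis}(1)).

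\medskip

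\noindent\textbf{Using $x_1\in S$ to control the geometry, then summing.}
Take $x$ not a wormhole (the wormhole case is analogous). By Lemma~\ref{analysis}(2) there are $c_0,N'>0$ with $D_n^{\pm}(x_1)\ge c_0/3^n$ for $n\ge N'$ and $D_n^{\pm}(x_1)\ge\rho_0>0$ for the finitely many $n<N'$. Thus once $r<\rho_0$, every wormhole level met by $[a,b]$ has order $n\ge N'$ and, lying within $r$ of $x_1$, satisfies $3^{-n}\le r/c_0$. Hence if $x_2\ne y_2$ branch first at level $k$, then $3^{-k}\le r/c_0$ and $|x_2-y_2|\le 2/3^{k}\le 2r/c_0$; moreover $\gamma$ threads only the chain of fibers $x_2=c^{(0)},c^{(1)},c^{(2)},\dots$ obtained from $x_2$ by successively flipping the branching coordinates $k=k_1<k_2<\cdots$, and on $c^{(j)}$ it runs over a height interval of length $\le C 3^{-k_j}$ inside $[x_1-Cr,x_1+Cr]$. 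To bound $\sum_\Sigma\operatorname{def}(\Sigma)$ one separates the ``deep'' levels, for which the crude bound $\operatorname{def}(\Sigma)\le 2\cdot(\text{length of }\Sigma)$ together with $\sum_j 3^{-k_j}\le 2\cdot3^{-k_1}\le Cr$ makes the tail $o(r)$, from the finitely many ``shallow'' levels, for each of which the transfer above — applied repeatedly down $c^{(j)},c^{(j-1)},\dots,c^{(0)}=x_2$ with cumulative enlargement $\le 2\sum_{i\le j}3^{-k_i}\le Cr$ — bounds that segment's deficit by $\Phi(Cr)=o(r)$. Combining the two ranges gives $\sum_\Sigma\operatorname{def}(\Sigma)=o(r)$, so $f$ is differentiable at $x$ with $Df(x)=1=f_I(x)$; that is, $x\in M$.

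\medskip

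\noindent\textbf{Main obstacle.}
The hard part is the geometry and the summation just sketched: one must rigorously establish the structure of a geodesic from $x$ to a nearby point — that it stays at heights $O(d(x,y))$ about $x_1$, uses only wormholes of order $\gtrsim\log_3(1/d(x,y))$, and threads through the binary chain of fibers converging to $y_2$ — and then carry out the summation over the infinitely many fiber-segments, reconciling the crude bound on the deep ones with the transferred bound on the shallow ones and keeping control of the multiplicity with which the transferred intervals overlap on the $x_2$-fiber. It is exactly the two-sided estimate $D_n^{\pm}(x_1)\asymp 3^{-n}$ coming from $x_1\in S$ (Lemma~\ref{analysis}) that keeps $3^{-k}\lesssim d(x,y)$ and confines the relevant wormhole heights to an $O(d(x,y))$-neighbourhood of $x_1$; when $x_1\notin S$ this breaks down — a jump of comparatively large diameter is reachable at negligible vertical cost — and that freedom is precisely what the converse direction of Theorem~\ref{mainthmmaxdiff} exploits to build a maximal Lipschitz $f$ that is not differentiable at $x$.
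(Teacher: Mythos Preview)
Your approach is genuinely different from the paper's and considerably more involved. You decompose a geodesic from $x$ to $y$ into fiber-segments, bound the error by the sum of their Lipschitz deficits, and transfer those deficits to the $x_2$-fiber through wormholes. The paper bypasses all of this with a short triangle-inequality trick: letting $N$ be the smallest wormhole level needed to join $x$ to $y$, the hypothesis $x_1\in S$ is used only through $d(x,y)\ge c(x_1)/3^N$ (Lemma~\ref{analysis}(2)); one then picks a single auxiliary point $z_N=[x_1-2/3^N,x_2]$ on the $x$-fiber, observes that the height interval $[h(z_N),h(y)]$ has length $\ge 2/3^N$ and therefore contains wormholes of every level $\ge N$, so that $d(y,z_N)=h(y)-h(z_N)$ exactly, and splits $f(y)-f(x)=(f(y)-f(z_N))+(f(z_N)-f(x))$, bounding the two pieces by the Lipschitz constant and by $f_I(x)=L$ respectively. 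An analogous point $w_N=[y_1+2/3^N,x_2]$ gives the reverse inequality. No infinite sums, no transfer, no multiplicity bookkeeping.

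Your outline can probably be completed, but the gap you yourself flag as the ``main obstacle'' is genuine rather than cosmetic. The claim that the geodesic visits the fibers $c^{(0)},c^{(1)},\dots$ in the order of the branching levels $k_1<k_2<\cdots$ with the segment on $c^{(j)}$ of length $\le C\,3^{-k_j}$ is not automatic: a generic geodesic may jump at those levels in a different order, and the down--up--down structure of Proposition~\ref{prop_geodesic} constrains where each jump can occur; one has to construct such a geodesic explicitly (compare the careful geodesic selection in Section~\ref{RademacherProof}). Without that control the deep/shallow split does not close: the crude bound yields only $\sum_\Sigma\operatorname{def}(\Sigma)\le 2r=O(r)$, while after transfer the enlarged intervals on the $x_2$-fiber can all overlap --- each enlargement is of order $3^{-k_1}\asymp r$, comparable to the whole minimal height interval --- so summing $\Phi(Cr)$ over infinitely many of them gives nothing. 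Filling this in essentially reproduces a chunk of the paper's Rademacher-type machinery, whereas the paper's proof of this particular proposition is elementary.
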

	
	\begin{proof} Fix $f\colon F\to \bbR$ Lipschitz with $|f_{I}(x)|=\mathrm{Lip}(f)$. Without loss of generality we assume $f_{I}(x)=\mathrm{Lip}(f)$, otherwise replace $f$ by $-f$. Let $L:=\mathrm{Lip}(f)$. We show
		\begin{equation}\label{lim_differentiability}
			\lim_{y\to x} \frac{f(y)-f(x)-L(h(y)-h(x))}{d(y,x)}=0.
		\end{equation}
		Fix $N(x_{1})\in \bbN$, $C(x_{1})\geq 1$, and $0<c(x_{1})<1$ such that \eqref{fractionpm} and Lemma \ref{analysis}(2) hold with $t=x_{1}$ for all $n\geq N(x_{1})$.
		
		\emph{Case 1:} Suppose $x$ is not a wormhole. Fix $\varepsilon>0$. Let $y\in F$ be sufficiently close to $x$ in a sense to be made precise below. Assume $y_{1}\geq x_{1}$; the case $y_{1}<x_{1}$ is similar. Let $N$ be the minimal $n\in \bbN$ for which every path connecting $x$ and $y$ must pass through a point of $F$ whose height belongs to $J_{n}$. By making $y$ sufficiently close to $x$, we may assume that $N\geq N(x_{1})$. Since every path connecting $x$ and $y$ must pass through a height in $J_{N}$, it follows that
		\begin{equation}\label{dlower}
			d(x,y)\geq \min\{ D_{N}^{+}(x_{1}), D_{N}^{-}(x_{1})\}\geq c(x_{1})/3^{N},
		\end{equation}
		where the fact $N\geq N(x_{1})$ and Lemma \ref{analysis}(2) was used for the second inequality. 
		
		Let $z_{N}:=[x_{1}-2/3^{N}, x_{2}]$. Using $f_{I}(x)=L$ and assuming $y$ is sufficiently close to $x$, which makes $N$ sufficiently large, we can ensure
		\[f(z_{N})-f(x)\leq L(h(z_{N})-h(x))+(\varepsilon/3^{N}).\]
		Notice that
		\[h(y)-h(z_{N})\geq h(x)-h(z_{N})=2/3^{N}\]
		and in any interval of length $2/3^{N}$ we can find elements of $J_{n}$ for all $n\geq N$. Hence $d(y,z_{N})=h(y)-h(z_{N})$. Using \eqref{dlower} for the final line, we now estimate as follows
		\begin{align*}
			f(y)-f(x)&= (f(y)-f(z_{N}))+(f(z_{N})-f(x))\\
			& \leq Ld(y,z_{N}) +L(h(z_{N})-h(x))+(\varepsilon/3^{N}) \\
			&= L(h(y)-h(z_{N})) +L(h(z_{N})-h(x))+(\varepsilon/3^{N})\\
			&= L(h(y)-h(x))+(\varepsilon/c(x_{1})) d(x,y).
		\end{align*}
		
		For the opposite inequality, let $w_{N}:=[y_{1}+2/3^{N},x_{2}]$. Provided $y$ is sufficiently close to $x$, which ensures $y_{1}$ is close to $x_{1}$ and $N$ is sufficiently large, we may use $f_{I}(x)=L$ to obtain
		\[f(w_{N})-f(x)\geq L(h(w_{N})-h(x))-\varepsilon (h(w_{N})-h(x)).\]
		Since any interval of length $2/3^N$ contains elements of $J_{n}$ for all $n\geq N$, we have $d(y,w_{N})=h(w_{N})-h(y)$. Using \eqref{dlower} for the final line,  we estimate as follows
		\begin{align*}
			f(y)-f(x) &= (f(y)-f(w_{N}))+(f(w_{N})-f(x))\\
			&\geq -Ld(y,w_{N}) + L(h(w_{N})-h(x))-\varepsilon (h(w_{N})-h(x))\\
			&=L(h(y)-h(w_{N})+h(w_{N})-h(x))-\varepsilon ((h(y)-h(x)+(2/3^{N}))\\
			&\geq L(h(y)-h(x))-\varepsilon d(x,y)(1+2/c(x_{1})).
		\end{align*}
		Hence, for $y$ sufficiently close to $x$,
		\[
		-\varepsilon(1+2/c(x_1))\leq\frac{f(x)-f(y)-L(h(x)-h(y))}{d(x,y)}\leq \varepsilon/c(x_1)
		\]
		which proves the limit \eqref{lim_differentiability}.
		
		\emph{Case 2:} Suppose $x$ is a wormhole of order $K\geq 1$, so $x_{1}\in J_{K}$. The argument is similar to that of Case 1, but we give the details for completeness. Let $x_{2}$ be the smaller of the two elements of $K$ satisfying $x=[x_{1},x_{2}]$. Fix $\varepsilon>0$. Let $y\in F$ be sufficiently close to $x$ in a sense to be made precise below. Assume $y_{1}\geq x_{1}$. The case $y_{1}<x_{1}$ is similar. Let $N$ be the minimal $n\in \mathbb{N}$ for which every path connecting $x$ and $y$ must pass through a height in $J_{n}\setminus J_{K}$. By making $y$ sufficiently close to $x$, we may assume that $N\geq N(x_{1})$. Since every path connecting $x$ and $y$ must pass through a height in $J_{N}$, it follows as before that
		\[d(x,y)\geq \min\{ D_{N}^{+}(x_{1}), D_{N}^{-}(x_{1})\}\geq c(x_{1})/3^{N}.\]
		
		Let $z_{N}=[x_{1}-2/3^{N}, x_{2}]$. Using $f_{I}(x)=L$ and assuming $y$ is sufficiently close to $x$, we have
		\[f(z_{N})-f(x)\leq L(h(z_{N})-h(x))+(\varepsilon/3^{N}).\]
		Notice that
		\[h(y)-h(z_{N})\geq h(x)-h(z_{N})=2/3^{N}\]
		and in any interval of length $2/3^{N}$ we can find elements of $J_{n}$ for all $n\geq N$. Hence $d(y,z_{N})=h(y)-h(z_{N})$, so we can estimate as follows
		\begin{align*}
			f(y)-f(x)&= (f(y)-f(z_{N}))+(f(z_{N})-f(x))\\
			& \leq Ld(y,z_{N})+L(h(z_{N})-h(x))+(\varepsilon/3^{N}) \\
			&= L(h(y)-h(z_{N}))+L(h(z_{N})-h(x))+(\varepsilon/3^{N})\\
			&= L(h(y)-h(x))+(\varepsilon/c(x_{1})) d(x,y).
		\end{align*}
		
		For the opposite inequality, fix $p\subset I\times K$ such that $q(p)$ is a path joining $x$ with $y$ and $\mathcal{H}^{1}(p)=d(x,y)$. If $(x_{1},x_{2})\in p$ we let $w_{N}:=[y_{1}+2/3^{N},x_{2}]$. Otherwise $p$ contains $(x_{1}, x_{2}+2/3^{K})$ and we let $w_{N}:=[y_{1}+2/3^{N},x_{2}+2/3^{K}]$. With this choice of $w_{N}$, the points $y$ and $w_{N}$ are separated only by jumps of level $n\geq N$. Assuming $y$ is sufficiently close to $x$, we may use $f_{I}(x)=L$ to obtain 
		\[f(w_{N})-f(x)\geq L(h(w_{N})-h(x))-\varepsilon (h(w_{N})-h(x)).\]
		Since any interval of length $2/3^N$ contains elements of $J_{n}$ for all $n\geq N$, we have $d(y,w_{N})=h(w_{N})-h(y)$. Using \eqref{dlower} for the final line,  we estimate as follows
		\begin{align*}
			f(y)-f(x) &= (f(y)-f(w_{N}))+(f(w_{N})-f(x))\\
			&\geq -Ld(y,w_{N}) + L(h(w_{N})-h(x))-\varepsilon (h(w_{N})-h(x))\\
			&=L(h(y)-h(w_{N})+h(w_{N})-h(x))-\varepsilon (h(y)-h(x)+(2/3^{N}))\\
			&\geq L(h(y)-h(x))-\varepsilon d(x,y)(1+(2/c(x_{1}))).
		\end{align*}
		Hence for $y$ sufficiently close to $x$,
		\[
		-\varepsilon(1+2/c(x_1))\leq\frac{f(x)-f(y)-L(h(x)-h(y))}{d(x,y)}\leq \varepsilon/c(x_1)
		\]
		which proves the limit \eqref{lim_differentiability} also for the case when $x$ is a wormhole and $y_1\geq x_1$.
	\end{proof}
	
	%
	%
		
	\begin{proposition}\label{nondiff}
		Suppose $x=[x_{1},x_{2}] \in F$ with $x_{1} \notin S$. Then $x\notin M$. In other words, there exists a Lipschitz map $f\colon F\to \bbR$ with a maximal directional derivative at $x$ which is not differentiable at $x$.
	\end{proposition}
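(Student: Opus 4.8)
The plan is to produce, for $x=[x_1,x_2]$ with $x_1\notin S$, an explicit $1$-Lipschitz $f\colon F\to\bbR$ with $f_I(x)=1=\mathrm{Lip}(f)$ which is not differentiable at $x$, the failure being witnessed along a sequence of points of height $x_1$ lying in a branch different from that of $x$. By Lemma~\ref{analysis}(3) we may assume $\limsup_n D_n^+(x_1)/D_n^-(x_1)=\infty$, the opposite case being handled by interchanging the roles of ``up'' and ``down'' throughout; and for brevity I would treat the case when $x$ is not a wormhole, the wormhole case being analogous once one defines $f$ symmetrically along the two vertical lines through the representatives of $x$. Choose a strictly increasing sequence $(n_k)$ with $a_k:=D_{n_k}^-(x_1)<b_k:=D_{n_k}^+(x_1)$ and $\eta_k:=a_k/b_k\to 0$, and (after thinning) $\eta_k\le 1/2$, with $a_k$ and $b_k$ strictly decreasing and $b_{k+1}\le b_k/2$; note $a_k,b_k\to 0$ by Lemma~\ref{analysis}(1).

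Next I would introduce an auxiliary non-decreasing $2$-Lipschitz function $\psi$ with $\psi(0)=0$, $\psi(b_k)=2a_k$, and $\psi$ piecewise linear between consecutive $b_k$'s; one checks the slopes lie in $[0,2]$ (using $b_k-b_{k+1}\ge b_k/2$ and $\eta_k\le 1/2$), so that $\psi(t)\le 2t$, and that $\psi(t)/t\to 0$ as $t\to 0^+$. Let $V$ be the vertical line through $x$ (constant $K$-coordinate $x_2$) and set $f[x_1+s,x_2]:=\phi(s)$ on $V$, where $\phi(s)=s$ for $s\le 0$ and $\phi(s)=s-\psi(\min(s,b_1))$ for $s\ge 0$; then $\phi$ is $1$-Lipschitz, $f(x)=0$, and $f_I(x)=\lim_{s\to 0}\phi(s)/s=1$. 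For each $k$ the point $[x_1-a_k,x_2]$ is a wormhole of order $n_k$, and I let $y_k$ be the point of height $x_1$ reached by jumping through it, i.e. $y_k=[x_1,x_2']$ with $x_2'$ the order-$n_k$ partner of $x_2$; since $x$ is not a wormhole, the $y_k$ are distinct and none lies on $V$. Using Propositions~\ref{prop_geodesic} and~\ref{prop_distance} together with $a_k<b_k$ one gets $d(x,y_k)=2a_k$ (the minimal height interval is $[x_1-a_k,x_1]$). Finally set $f(y_k):=-a_k$ and $A:=V\cup\{y_k:k\ge 1\}$.

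The core step is to verify that $f$ is $1$-Lipschitz on $A$; granting this, a McShane extension keeps $f$ $1$-Lipschitz on $F$ and unchanged on $V$, so $f_I(x)=1$ still and $\mathrm{Lip}(f)\ge 1$ by Proposition~\ref{Lipissup}, whence $\mathrm{Lip}(f)=1=f_I(x)$ and $f$ has a maximal directional derivative at $x$. Pairs inside $V$ are handled by $1$-Lipschitzness of $\phi$; for $y_k,y_j$ the triangle inequality suffices, $|f(y_k)-f(y_j)|=|a_k-a_j|\le 2|a_k-a_j|=|d(x,y_k)-d(x,y_j)|\le d(y_k,y_j)$. The delicate pair is $y_k$ and $q=[x_1+s,x_2]\in V$: here one first checks, via Proposition~\ref{prop_distance}, that $y_k$ and any point of $V$ lie in branches separated precisely by order-$n_k$ wormhole levels — a consequence of the nested Cantor structure, since adding $\pm 2/3^{n_k}$ to $x_2$ changes its membership only at the $n_k$-th stage of the construction — and deduces $d(y_k,q)=\min(2a_k+s,2b_k-s)$ for $0\le s\le b_k$, $d(y_k,q)=s$ for $s\ge b_k$, and the analogous formulas for $s<0$. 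One then verifies $|{-a_k}-\phi(s)|\le d(y_k,q)$ in each regime, the key inequalities being $\psi(s)\le 2s$ and $\psi(s)\ge\psi(b_k)-2(b_k-s)=2a_k-2(b_k-s)$. I expect this distance bookkeeping — and the balancing act in choosing $\psi$ (large enough that $\psi(b_k)\ge a_k$, which is what allows $f(y_k)$ to be bounded away from $f(x)$ relative to $d(y_k,x)$ while staying $1$-Lipschitz against the points above $x$, yet with $\psi(b_k)/b_k\to 0$ so that $f_I(x)=1$) — to be the main obstacle.

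The conclusion is then immediate: if $f$ were differentiable at $x$, Lemma~\ref{lemma_deriv_implies_diff} would give $Df(x)=f_I(x)=1$, so $\big(f(y)-f(x)-(h(y)-h(x))\big)/d(y,x)\to 0$ as $y\to x$; but $y_k\to x$ with $h(y_k)=h(x)$ yields $\big(f(y_k)-f(x)\big)/d(y_k,x)=-a_k/(2a_k)=-\tfrac12\not\to 0$. Hence $f$ is not differentiable at $x$ and $x\notin M$, as claimed.
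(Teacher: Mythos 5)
Your proposal is correct and is essentially the paper's own argument, just reflected vertically: you present the case $\limsup D_n^+/D_n^-=\infty$ (jumping through the nearer wormhole \emph{below} $x$ and damping $f$ slightly \emph{above} $x$ via the piecewise-linear correction $\psi$ with $\psi(b_k)=2a_k$), whereas the paper presents the mirror case and writes the same correction as the integral of a step function $\varphi$ with plateaus $\theta_{n_k}\to 1$. The remaining differences (your reverse-triangle-inequality treatment of the pairs $y_k,y_j$ versus the paper's explicit geodesic, and your observation that $x_1\notin S$ already rules out the wormhole case) are cosmetic.
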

	
	\begin{proof}
	We give the proof in the case $x_{1}\in (0,1)\setminus S$. If $x_{1} \in \{0,1\}$ the proof would be largely the same, except the construction is made only on one side of $x$; vertically above if $x_{1}=0$ and vertically below if $x_{1}=1$.

		For simplicity let $D_{n}^{+}:= D_{n}^{+}(x_{1})$ and $D_{n}^{-}:=D_{n}^{-}(x_{1})$. It follows that either
		\[ \limsup_{n\to \infty} D_{n}^{-}/D_{n}^{+} =\infty \qquad \mbox{or} \qquad \limsup_{n\to \infty}D_{n}^{+}/ D_{n}^{-}=\infty\]
		We assume that $\limsup_{n\to \infty} D_{n}^{-}/D_{n}^{+} =\infty$; the argument in the other case is similar with the construction inverted in the $I$ direction. Choose a strictly increasing sequence $n_{k}$ such that $D_{n_{k}}^{-}, D_{n_{k}}^{+}$ are finite for all $k\geq 1$ and $D_{n_{k}}^{-}/D_{n_{k}}^{+}\to \infty$. Since $D_{n}^{+}\to 0$ and $D_{n}^{-}\to 0$ as $n\to \infty$, by taking a subsequence if necessary, we may assume $D_{n_{k}}^{-}$ and $D_{n_{k}}^{+}$ are each strictly decreasing and for every $k\geq 1$,
		\[ 2D_{n_{k+1}}^{-}/D_{n_{k}}^{-}<1/n_{k} \qquad \mbox{and} \qquad 2D_{n_{k}}^{+}/D_{n_{k}}^{-}< 1/n_{k}.\]
		Fix a sequence $0<\theta_{n_{k}}<1$ which satisfies $\theta_{n_{k}}\to 1$ as $k\to \infty$ and
		\begin{equation}\label{thetadef}
			\theta_{n_{k}}\leq \left( 1 - \frac{D_{n_{k}}^{+}+D_{n_{k+1}}^{-}}{D_{n_{k}}^{-}} \right) / \left( 1 - \frac{D_{n_{k+1}}^{-}}{D_{n_{k}}^{-}} \right) \qquad \mbox{for all }k\geq 1.
		\end{equation}
		Note that, since the right hand side converges to $1$ as $k\to \infty$, these conditions on $\theta_{n_{k}}$ can be realized. Let $J:=[x_{1}-D_{n_{1}}^{-},x_{1}+D_{n_{1}}^{+}]\subset \bbR$. Define $\varphi\colon J\to [0,1]$ by 
		\[\varphi(t)=1 \qquad \mbox{if } t\geq x_{1}\]
		and
		\[ \varphi(t)=\theta_{n_{k}} \qquad \mbox{if } x_{1}-D_{n_{k}}^{-}\leq t<x_{1}-D_{n_{k+1}}^{-} \mbox{ for some }k\geq 1.\]
		
		\bigskip
		
		Since $x_{1}\notin S$, we know that $x$ is not a wormhole level. For all $k\geq 1$, let $y_{n_{k}}\in F$ be the endpoint of the path which starts at $x$, travels vertically up along the line segment to height $x_{1}+D_{n_{k}}^{+}$, jumps using the level $J_{n_{k}}$, then travels vertically down along the line segment to height $x_{1}$. Thus $y_{n_{k}}=[x_{1},x_{2}\pm 2/3^{n_{k}}]$ where the choice of sign may depend on $k$. Since $D_{n_{k}}^{+}<D_{n_{k}}^{-}$, we have $d(y_{n_{k}},x)=2D_{n_{k}}^{+}$. Now let
		\[A:=\{ [t,x_{2}]\in F \colon t\in J\} \cup \{y_{n_{k}}: k\in \bbN\}.\]
		Since $x$ is not a wormhole level, the sets $\{ [t,x_{2}]\in F \colon t\in J\}$ and $\{y_{n_{k}}: k\in \bbN\}$ are disjoint. Hence we may define $f\colon A\to \bbR$ by 
		\[f[t,x_{2}]= \int_{x_{1}}^{t} \varphi(s)\dd s \qquad \mbox{for }t\in J\]
		and
		\[f(y_{n_{k}})=D_{n_{k}}^{+} \qquad \mbox{for }k\in \bbN.\]
		Notice that $f(x)=0$.
		
		\bigskip
		
		\emph{Claim.} $f$ is $1$-Lipschitz with respect to the restriction of $d$ to $A$.
		
		\begin{proof}[Proof of Claim]
			Suppose $a=[t,x_{2}]$ and $b=[s,x_{2}]$ for some $t,s\in J$. Then $|\varphi|\leq 1$ implies
			\[|f(a)-f(b)|\leq |t-s|=d(a,b).\]
			Hence $f$ is $1$-Lipschitz restricted to the set $\{ [t,x_{2}]\in F \colon t\in J\}$.
			
			Next let $u_{n_{k}}$ be the point reached by starting at $x$ and travelling vertically up along the line segment to height $x_{1}+D_{n_{k}}^{+}$. Similarly let $d_{n_{k}}$ be the point reached by starting at $x$ and travelling vertically down along the line segment to height $x_{1}-D_{n_{k}}^{-}$. It follows from the definition of $f$ that 
			\[f(u_{n_{k}})=D_{n_{k}}^{+}=f(y_{n_{k}}).\]
			On the other hand we have, using the definitions of $f$ and $\varphi$,
			\begin{align*}
				f(y_{n_{k}})-f(d_{n_{k}}) &= f(u_{n_{k}})-f(d_{n_{k}})\\
				&= \int_{x_{1}-D_{n_{k}}^{-}}^{x_{1}+D_{n_{k}}^{+}}\varphi (s) \dd s\\
				&= \theta_{n_{k}}(D_{n_{k}}^{-}-D_{n_{k+1}}^{-}) + \int_{x_{1}-D_{n_{k+1}}^{-}}^{x_{1}}\varphi(s)\dd s + D_{n_{k}}^{+}.
			\end{align*}	
			Using $|\varphi|\leq 1$, $d(d_{n_{k}},y_{n_{k}})=D_{n_{k}}^{-}$, and the choice of $\theta_{n_{k}}$ in \eqref{thetadef}, we obtain
			\begin{align*}
				\frac{ |f(y_{n_{k}})-f(d_{n_{k}})| }{ d(d_{n_{k}},y_{n_{k}}) } &\leq \theta_{n_{k}}\left( 1 - \frac{D_{n_{k+1}}^{-}}{D_{n_{k}}^{-}} \right) + \frac{D_{n_{k+1}}^{-}}{D_{n_{k}}^{-}}+\frac{D_{n_{k}}^{+}}{D_{n_{k}}^{-}}\\
				&\leq 1.
			\end{align*}
			
			Suppose $a=[t,x_{2}]$ for some $t\in J$ and $k\geq 1$. Every geodesic from $y_{n_{k}}$ to $a$ must pass through either $u_{n_{k}}$ or $d_{n_{k}}$. Denote such a point by $z_{n_{k}}$; the argument will be the same in either case. Then we have
			\[d(y_{n_{k}},a)=d(y_{n_{k}},z_{n_{k}})+d(z_{n_{k}},a).\]
			Using also what was proved above, we have
			\begin{align*}
				|f(y_{n_{k}})-f(a)| &\leq |f(y_{n_{k}})-f(z_{n_{k}})| + |f(z_{n_{k}})-f(a)|\\
				&\leq d(y_{n_{k}},z_{n_{k}}) + d(z_{n_{k}}, a)\\
				&=d(y_{n_{k}},a).
			\end{align*}
			
			It remains to estimate $|f(y_{n_{k}})-f(y_{n_{l}})|$ for $k>l\geq 1$. Define points $u_{n_{l}}, u_{n_{k}}$ and $d_{n_{l}}, d_{n_{k}}$ as before. A geodesic from $y_{n_{l}}$ to $y_{n_{k}}$ is obtained by the following curve:
			\begin{enumerate}
				\item Start at $y_{n_{l}}$,
				\item Travel vertically upward to the wormhole $u_{n_{l}}$,
				\item Jump using wormhole $u_{n_{l}}$ and travel downwards to the wormhole $u_{n_{k}}$,
				\item Jump using wormhole $u_{n_{k}}$ and travel downwards to the point $y_{n_{k}}$.
			\end{enumerate}
			Thus $d(y_{n_{l}},y_{n_{k}})=2D_{n_{l}}^{+}$ for $k>l$. Hence we can estimate
			\begin{align*}
				|f(y_{n_{l}})-f(y_{n_{k}})|&=|D_{n_{l}}^{+}-D_{n_{k}}^{+}|\\
				&=D_{n_{l}}^{+}-D_{n_{k}}^{+}\\
				&\leq d(y_{n_{l}},y_{n_{k}}).
			\end{align*}
			This concludes the proof  of the claim.
		\end{proof}
		
		Now extend $f$ arbitrarily to a $1$-Lipschitz function $f\colon F\to \bbR$.
		
		\bigskip
		
		\emph{Claim.} The directional derivative $f_{I}(x)$ exists and equals $1$.
		
		\begin{proof}[Proof of Claim]
			If $t\in [-D_{n_{1}}^{-}, D_{n_{1}}^{+}]$ then 
			\[\frac{f[x_{1}+t,x_{2}] - f[x_{1},x_{2}]}{t} = 1+ \frac{1}{t}\int_{x_{1}}^{x_{1}+t} (\varphi(s)-1)\dd s.\]
			Since $\varphi(s)=1$ for $s\geq x_{1}$ we obtain
			\[\frac{f[x_{1}+t,x_{2}] - f[x_{1},x_{2}]}{t} =1 \qquad \mbox{for all }t\geq 0.\]
			Fix $\varepsilon>0$ and fix $K$ large enough so that $|\theta_{n_{k}}-1|<\varepsilon$ for all $k\geq K$. Then for $0>t>-D_{n_{K}}^{-}$ we obtain
			\begin{align*}
				\left| \frac{1}{t}\int_{x_{1}}^{x_{1}+t} (\varphi(s)-1)\dd s \right| &\leq \varepsilon.
			\end{align*}
			This proves the claim.
		\end{proof}
		
		\emph{Claim.} $f$ is not differentiable at $x$.
		
		\begin{proof}[Proof of Claim]
			Recall that $f(x)=0$, $f(y_{n_{k}})=D_{n_{k}}^{+}$ and $d(y_{n_{k}},x)=2D_{n_{k}}^{+}$. Hence for any $\lambda \in \bbR$ we have,
			\begin{align*}
				\frac{f(y_{n_{k}})-f(x)-\lambda (h(y_{n_{k}})-h(x))}{d(y_{n_{k}},x)} &= \frac{f(y_{n_{k}})-f(x)}{d(y_{n_{k}},x)} \\
				&=\frac{1}{2}.\\
			\end{align*}
			Since $y_{n_{k}}\to x$ as $k\to \infty$, this shows that $f$ is not differentiable at $x$. 
		\end{proof}
		This proves the proposition.
	\end{proof}

	\subsection{Porosity}
	
	We have now shown that $M=\{[x_{1},x_{2}]\colon x_{1}\in S\}$. We now study the set $S$ and show that it is $\sigma$-porous, hence a relatively small set.
	
	\begin{lemma}\label{preimage}
		If $A\subset I$ is porous (respectively $\sigma$-porous), then $h^{-1}(A)\subset F$ is also porous (respectively $\sigma$-porous).
	\end{lemma}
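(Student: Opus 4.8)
The plan is to transfer porosity from $A\subset I$ up to $h^{-1}(A)\subset F$ by exploiting the fact that the height map $h\colon F\to I$ is $1$-Lipschitz and, more importantly, that short line segments in the $I$-direction inside $F$ project isometrically onto intervals in $I$. Since $\sigma$-porosity is just a countable union of porous sets and $h^{-1}$ commutes with countable unions, it suffices to treat the case where $A$ is porous; so fix $0<\rho<1$ witnessing porosity of $A$.

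First I would fix $x=[x_1,x_2]\in h^{-1}(A)$, so $x_1=h(x)\in A$, and fix $\delta>0$. Applying porosity of $A$ at the point $x_1$ with scale $\min\{\delta,1\}$ (and, if needed, with a slightly smaller scale so that the interval of radius $\rho|t_0-x_1|$ about a suitable point stays well inside $(0,1)$), we obtain $t_0\in I$ with $|t_0-x_1|<\delta$ and $(t_0-\rho|t_0-x_1|,\,t_0+\rho|t_0-x_1|)\cap A=\varnothing$. The natural candidate in $F$ is the point $y:=[t_0,x_2']$ lying vertically above or below $x$ along a geodesic, where $x_2'\in K$ is chosen so that $[x_1,x_2]$ and $[t_0,x_2']$ are joined by a vertical segment (if $|t_0-x_1|$ is small enough this is just $[t_0,x_2]$, possibly after using a single wormhole at $x$ when $x$ itself is a wormhole). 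Then $d(y,x)=|t_0-x_1|<\delta$ by Proposition~\ref{prop_distance} applied to a minimal height interval, provided the scale was taken small enough that no forced detour occurs, i.e. provided $|t_0-x_1|\le 2/3^n$ for the relevant $n$; this is arranged by shrinking $\delta$ at the outset, since porosity with a smaller $\delta$ still yields porosity.

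The key step is then the inclusion $B(y,\rho\,d(y,x))\cap h^{-1}(A)=\varnothing$. If $z\in B(y,\rho\,d(y,x))$ then, since $h$ is $1$-Lipschitz, $|h(z)-h(y)|\le d(z,y)<\rho\,d(y,x)=\rho|t_0-x_1|$, so $h(z)$ lies in the interval $(t_0-\rho|t_0-x_1|,\,t_0+\rho|t_0-x_1|)$, which is disjoint from $A$; hence $h(z)\notin A$, i.e. $z\notin h^{-1}(A)$. This shows $h^{-1}(A)$ is porous with the same constant $\rho$.

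\textbf{Main obstacle.} The one genuinely delicate point is ensuring that the point $y$ produced above really does satisfy $d(y,x)=|h(y)-h(x)|$ rather than something larger coming from a forced vertical detour down to a wormhole level and back up (Proposition~\ref{prop_geodesic}); and, when $x$ is a wormhole, that a vertical segment from $x$ actually exists on at least one side. Both are handled by noting that in \emph{any} interval of length $2/3^n$ one finds wormhole heights of every order $\ge n$, so for $|h(y)-h(x)|$ small the straight vertical segment is already a geodesic and $h$ restricted to it is an isometry onto its image; and the case $x_1\in\{0,1\}$ (or $x$ a wormhole) is dispatched by making the construction one-sided, exactly as in the one-sided variants appearing in Lemma~\ref{lemma_deriv_implies_diff} and Proposition~\ref{nondiff}. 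Everything else is a direct application of the $1$-Lipschitz property of $h$ together with the distance formula of Proposition~\ref{prop_distance}.
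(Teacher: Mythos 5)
Your proof is correct and is essentially the paper's argument: lift the porosity point $t_0$ to the point $y=[t_0,x_2]$ directly above or below $x$, and use the $1$-Lipschitz property of $h$ to push the hole $B(y,\rho\, d(y,x))$ down to the interval $(t_0-\rho|t_0-x_1|,\,t_0+\rho|t_0-x_1|)$, which misses $A$. The wormhole worries in your final paragraph are unnecessary: since $y$ shares the $K$-coordinate $x_2$ with $x$, the vertical segment between them is a path of length $|t_0-x_1|$, and $h$ being $1$-Lipschitz then forces $d(y,x)=|t_0-x_1|$ at \emph{every} scale, so no shrinking of $\delta$, no choice of $x_2'$, and no one-sided variant are needed.
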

	
	\begin{proof}
		Since $A$ is porous in $I$, there exists $C>0$ such that for every $t\in A$ there is a sequence $t_{n}\in I$ with $t_{n}\to t$ such that
		\begin{equation}\label{porousI}
			B(t_{n},C|t_{n} - t|)\cap A=\varnothing \qquad \mbox{for every }n\in \bbN.
		\end{equation}
		Fix $[t,x]\in h^{-1}(A)$. Then $t\in A$. Hence there exists a sequence $t_{n}\in I$ with $t_{n}\to t$ such that \eqref{porousI} holds. Consider the sequence $[t_{n},x]\in F$. Clearly $[t_{n},x]\to [t,x]$ with respect to the natural metric on $F$. Let 
		\[B_{n}=B([t_{n},x],Cd([t_{n},x],[t,x])).\] 
		We claim that
		\begin{equation}\label{porousF}
			B_{n} \cap h^{-1}(A)=\varnothing. \qquad \mbox{for every }n\in \bbN
		\end{equation}
		To this end, fix $n\in \bbN$ and suppose $[s,y]\in B_{n}$. Then
		\begin{align*}
			|s-t_{n}|&\leq d([s,y],[t_{n},x])\\
			&\leq Cd([t_{n},x],[t,x])\\
			&=C|t_{n}-t|.
		\end{align*}
		Hence $s\in B(t_{n}, C|t_{n}-t|)$. By \eqref{porousI}, this implies $s\notin A$. Hence $[s,y]\notin h^{-1}(A)$. This shows \eqref{porousF}, so $h^{-1}(A)$ is porous in $F$ as required.
	\end{proof}
	
	We can now prove Proposition \ref{SMsmall}.
	
	\begin{proposition}\label{SMsmall}
		The set $S\subset I$ is $\sigma$-porous in $I$, hence first category and of Lebesgue measure zero. 
		
		The set $M\subset F$ is $\sigma$-porous in $F$, hence first category and of $\mathcal{H}^{Q}$ measure zero.
	\end{proposition}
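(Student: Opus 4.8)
The plan is to write $S$ as a countable union of porous subsets of $I$ and then transfer porosity to $M\subseteq F$ using Lemma~\ref{preimage}. For $C,N\in\bbN$ I would set
\[
S_{C,N}:=\Big\{\,t\in(0,1): C^{-1}\le D_n^{+}(t)/D_n^{-}(t)\le C\ \text{ for every } n\ge N\,\Big\}.
\]
Straight from Definition~\ref{defS}, every $t\in S$ lies in $S_{C,N}$ once $C\ge C(t)$ and $N\ge N(t)$, while clearly $S_{C,N}\subseteq S$; hence $S=\bigcup_{C,N\in\bbN}S_{C,N}$, and it suffices to show each $S_{C,N}$ is porous in $I$ (with a porosity constant allowed to depend on $C,N$).

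To prove $S_{C,N}$ is porous I would exploit that $J_n\subseteq 3^{-n}\mathbb{Z}$, so distinct elements of $J_n$ are at least $3^{-n}$ apart; therefore a point $u$ lying very close to some $p\in J_n$ is forced to have one of $D_n^{\pm}(u)$ much smaller than the other, which puts $u\notin S_{C,N}$. Concretely: given $t\in S_{C,N}$ and $\delta>0$, use Lemma~\ref{analysis}(1) to pick $n\ge N$ so large that $3\cdot 3^{-n}<\delta$ and there is $p\in J_n$ with $|p-t|\le 2\cdot 3^{-n}$ (note $p\le 1-3^{-n}$, the largest element of $J_n$). Set $r:=3^{-n}/(3C)$ and $s:=p+r/2$; since $r<3^{-n}$ we get $p+r\le 1$. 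For any $u\in(p,p+r)$ one has $D_n^{-}(u)=u-p<r$, while $D_n^{+}(u)\ge 3^{-n}-(u-p)>3^{-n}(1-1/(3C))$, interpreted as $+\infty$ if $J_n$ has no element above $u$; hence $D_n^{+}(u)/D_n^{-}(u)>3C-1\ge C$, and since $n\ge N$ this means $u\notin S_{C,N}$. Thus $B(s,r/2)=(p,p+r)$ is disjoint from $S_{C,N}$ (in particular $s\ne t$). Finally $|s-t|\le r/2+2\cdot 3^{-n}\le 3\cdot 3^{-n}<\delta$, and with $\rho:=1/(18C)\in(0,1)$ we have $\rho|s-t|\le 3^{-n}/(6C)=r/2$, so $B(s,\rho|s-t|)\subseteq B(s,r/2)$ avoids $S_{C,N}$. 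As $\rho$ depends only on $C$, this shows $S_{C,N}$ is porous, so $S$ is $\sigma$-porous in $I$.

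The rest follows from facts already recorded in Section~\ref{Preliminaries}: every $\sigma$-porous set is of first category, and porous (hence $\sigma$-porous) subsets of a doubling metric measure space are null, so $S$ is of first category and Lebesgue-null. By Propositions~\ref{diff} and~\ref{nondiff} we have $M=h^{-1}(S)$, so Lemma~\ref{preimage} gives that $M$ is $\sigma$-porous in $F$; being $\sigma$-porous, $M$ is of first category, and since $(F,d,\mathcal{H}^{Q})$ is doubling it has $\mathcal{H}^{Q}$ measure zero.

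I expect the only genuinely delicate step to be the porosity estimate for $S_{C,N}$: one must place the hole adjacent to a level $J_n$ at exactly the right scale (about $3^{-n}/C$), confirm the hole stays inside $[0,1]$, and check that the resulting porosity constant is independent of the particular point $t\in S_{C,N}$. Everything else is bookkeeping together with already-established statements.
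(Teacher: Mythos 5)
Your proposal is correct and follows essentially the same route as the paper: the same decomposition $S=\bigcup_{C,N}S_{C,N}$, a hole of width comparable to $3^{-n}/C$ placed immediately next to a wormhole level in $J_n$ (your $r=3^{-n}/(3C)$ plays the role of the paper's $\lambda/3^n$ with $(1-\lambda)/\lambda>C$), and the transfer to $M=h^{-1}(S)$ via Lemma~\ref{preimage}. The extra checks you flag (the hole staying in $[0,1]$, the exact value of $D_n^{-}(u)$, the uniform porosity constant) are correct and only make explicit what the paper leaves implicit.
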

	
	\begin{proof}
		We can write
		\[S=\bigcup_{\substack{C\in \mathbb{Q}\\C>1}} \bigcup_{N\in \bbN} S_{C,N}\]
		where
		\[S_{C,N}=\left\{t\in (0,1)\colon D_{n}^{-}(t), D_{n}^{+}(t)<\infty \mbox{ and }C^{-1}\leq \frac{D_{n}^{+}(t)}{D_{n}^{-}(t)} \leq C \mbox{ for all } n\geq N\right\}.\]
		Fix $C\in \mathbb{Q}$ with $C>1$ and $N\in \bbN$. We show that the set $S_{C,N}\subset (0,1)$ is porous. Fix $0<\lambda<1/2$ such that $(1-\lambda)/\lambda > C$. Let $t\in J_{n}$ for some $n\geq N$. We will show that:
		\begin{equation}\label{empty}
			\left(t, t+\frac{\lambda}{3^n} \right) \cap S_{C,N}=\varnothing.
		\end{equation} 
		First fix $s\in (t,t+\lambda/3^n)$. Then $D_{n}^{-}(s)\leq \lambda/3^n$ and $D_{n}^{+}(s)\geq 1/3^{n}-\lambda/3^n$. Combining these inequalities gives
		\[\frac{D_{n}^{+}(s)}{D_{n}^{-}(s)}\geq \frac{1-\lambda}{\lambda} >C.\]
		Hence $s\notin S_{C,N}$, which establishes  \eqref{empty}.
		
		Next fix $t_{0}\in S_{C,N}$ and $\delta>0$. Choose $n>N$ with $2/3^n<\delta$ and $t\in J_{n}$ with $|t-t_{0}|<2/3^{n}$. Then $(t,t+\lambda/3^{n})\cap S_{C,N}=\varnothing$. This shows that $S_{C,N}$ is porous and hence $S$ is $\sigma$-porous. 
		
		The second part of the proposition follows by Lemma \ref{preimage} and $M=h^{-1}(S)$. Finally, for the implication in each case, we recall porous sets are nowhere dense and have measure zero with respect to doubling measures. Hence $\sigma$-porous sets are first category and measure zero with respect to doubling measures.
	\end{proof}
	
	\begin{theorem}[Restatement of Theorem \ref{mainthmmaxdiff}]
		Let $M$ be the set of $x\in F$ such that whenever $f\colon F\to \bbR$ is Lipschitz with directional derivative $f_{I}(x)=\pm \lip(f)$ at $x$, then $f$ is differentiable at $x$. 
		
		Then $M=\{[x_{1},x_{2}]\colon x_{1}\in S\}$. In particular $M$ is $\sigma$-porous.
	\end{theorem}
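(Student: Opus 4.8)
The plan is to assemble the three preceding propositions; by the time we reach this restatement essentially all the work has been done. First I would establish the set equality $M=\{[x_{1},x_{2}]\colon x_{1}\in S\}$ by a double inclusion. The inclusion $\{[x_{1},x_{2}]\colon x_{1}\in S\}\subseteq M$ is exactly Proposition \ref{diff}: any $x=[x_{1},x_{2}]$ with $x_{1}\in S$ has the property that every Lipschitz $f\colon F\to\bbR$ with $|f_{I}(x)|=\lip(f)$ is differentiable at $x$, i.e.\ $x\in M$. For the reverse inclusion I would argue by contraposition: if $x=[x_{1},x_{2}]$ with $x_{1}\notin S$, then Proposition \ref{nondiff} produces a Lipschitz map $f\colon F\to\bbR$ with a maximal directional derivative at $x$ which fails to be differentiable at $x$, so $x\notin M$. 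Combining the two inclusions gives $M=\{[x_{1},x_{2}]\colon x_{1}\in S\}$.

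Next I would observe that this set is precisely $h^{-1}(S)$, since $h[x_{1},x_{2}]=x_{1}$ and $h\colon F\to I$ is onto. The $\sigma$-porosity of $M$ then follows from Proposition \ref{SMsmall}: there $S$ is shown to be $\sigma$-porous in $I$ by writing $S=\bigcup_{C\in\mathbb{Q},\,C>1}\bigcup_{N\in\bbN}S_{C,N}$ and checking each $S_{C,N}$ is porous, using heights in $J_{n}$ (for large $n$) to manufacture the relatively large holes forced by the ratio bound on $D_{n}^{+}/D_{n}^{-}$. Lemma \ref{preimage} then transfers $\sigma$-porosity from $A\subset I$ to $h^{-1}(A)\subset F$, so $M=h^{-1}(S)$ is $\sigma$-porous in $F$. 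Since $F$ carries the doubling measure $\mathcal{H}^{Q}$, one may additionally record that $M$ is of first category and $\mathcal{H}^{Q}$-null, although the statement only asserts $\sigma$-porosity.

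There is no real obstacle left at this stage: the restatement is just the packaging of Propositions \ref{diff}, \ref{nondiff} and \ref{SMsmall}. The genuine difficulty lives earlier, in the construction of Proposition \ref{nondiff}, where one must build a $1$-Lipschitz function on the partial domain $A$ (a vertical segment through $x$ together with the wormhole endpoints $y_{n_{k}}$) satisfying simultaneously $f_{I}(x)=1$ and a difference quotient toward $y_{n_{k}}$ pinned at $1/2$; this forces the careful choice of the weights $\theta_{n_{k}}$ in \eqref{thetadef} and the telescoping estimates that verify the Lipschitz bound, the matching positive statement being Proposition \ref{diff}. The final theorem merely collects these conclusions, so the proof I would write is short: cite Proposition \ref{diff} for one inclusion, Proposition \ref{nondiff} for the other, note $M=h^{-1}(S)$, and invoke Proposition \ref{SMsmall}.
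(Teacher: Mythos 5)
Your proposal is correct and follows exactly the paper's own route: the paper proves the restatement by combining Proposition \ref{diff} (giving $\{[x_{1},x_{2}]\colon x_{1}\in S\}\subseteq M$), Proposition \ref{nondiff} (giving the reverse inclusion by contraposition), and Proposition \ref{SMsmall} together with Lemma \ref{preimage} (giving $\sigma$-porosity of $M=h^{-1}(S)$). Your observation that all the substantive work lives in the earlier propositions matches the paper, whose proof of the restatement is the same one-line assembly you describe.
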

	
	\begin{proof}
		The theorem follows from combining Proposition \ref{diff}, Proposition \ref{nondiff}, and Proposition \ref{SMsmall}.
	\end{proof}

	\section{Differentiability of the Distance Function}\label{distdiff}

In this section we study where the distance to a fixed point is differentiable. We show the set where the distance to a fixed point is not differentiable is $\sigma$-porous.

\subsection{Analysis of Distance}

We begin by proving some simple properties of the distance map $y\mapsto d_{p}(y):=d(y,p)$ for each fixed point $p\in F$.

\begin{lemma}\label{claim1}
	Fix $p=[p_1,p_2]\in F$. Suppose $x=[x_1,x_2]\in F\setminus\{p\}$ is not a wormhole. Then there exists $0<\Delta<1$ such that if $|t|<\Delta$ and $y_{2}\in K$ with $|y_{2}-x_{2}|<\Delta$, then
	\[d_{p}[x_{1}+t,x_{2}]=d_{p}[x_{1}+t,y_{2}].\]
\end{lemma}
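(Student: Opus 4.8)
The plan is to show that for $x$ a non-wormhole point distinct from $p$, nearby perturbations of $x_2$ (keeping the height coordinate) do not affect the distance to $p$, because the geodesic structure depends only on heights and the wormhole levels that must be crossed, and small perturbations in the Cantor coordinate do not change which wormhole levels separate the point from $p$.

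First I would fix $p = [p_1, p_2]$ and the non-wormhole point $x = [x_1, x_2] \neq p$. By Proposition \ref{prop_distance}, $d(x, p) = 2b - 2a - |h(x) - h(p)|$ where $[a,b]$ is a minimal height interval for $x$ and $p$. The key observation is that $[a,b]$ is determined by: (i) the heights $x_1$ and $p_1$, and (ii) which wormhole levels are needed to connect the two points — that is, at which height in $J_n$ the relevant Cantor components of $x$ and $p$ become identified. Since $x$ is not a wormhole, $x_1 \notin J_n$ for any $n$; hence there is a smallest order $n_0$ of wormhole level strictly between (or at the appropriate side of) the heights that is actually usable to join $x$'s component to $p$'s component. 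Because the Cantor set $K$ is totally disconnected with a definite gap structure at each scale, I would choose $\Delta$ small enough that: any $y_2 \in K$ with $|y_2 - x_2| < \Delta$ lies in the same $K_a$-component as $x_2$ for all strings $a$ of length up to some $n_1 > n_0$; and $|t| < \Delta$ keeps the height $x_1 + t$ inside the interior of the relevant minimal height interval away from the endpoints $a, b$ and away from the wormhole level used. Concretely, since adjacent elements of $J_n$ are separated by at least $1/3^n$ and $x_1 \notin \bigcup_{n \le n_1} J_n$, the distance from $x_1$ to this finite union of wormhole levels is positive, so I take $\Delta$ smaller than that distance and smaller than the scale of the gap in $K$ separating $x_2$'s level-$n_1$ component from its neighbors.

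With $\Delta$ so chosen, I would argue that for $|t| < \Delta$ and $|y_2 - x_2| < \Delta$, the points $[x_1 + t, x_2]$ and $[x_1 + t, y_2]$ have the \emph{same} minimal height interval with respect to $p$: the heights agree (both equal $x_1 + t$), and since $y_2$ and $x_2$ lie in the same Cantor component at every level $\le n_1 > n_0$, exactly the same wormhole levels are available and needed to connect each of them to $p$ — any level-$n$ identification with $n \le n_1$ either merges both or neither, and levels of order $> n_1$ merge everything that was already nearly merged. Thus the value $2b - 2a - |h - h(p)|$ from Proposition \ref{prop_distance} is identical for the two points, giving $d_p[x_1 + t, x_2] = d_p[x_1 + t, y_2]$.

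The main obstacle is making precise the claim that ``the same wormhole levels are needed,'' i.e., controlling the interaction between the Cantor-coordinate proximity and the path-connectivity structure — one must carefully verify that perturbing $y_2$ within a small Cantor neighborhood does not open up a \emph{shorter} route to $p$ via a lower-order wormhole, nor destroy the route that $x$ uses. This is handled by noting that whether two components $K_a, K_b$ get identified at level $n$ is a finite, combinatorial condition on the strings $a, b$ of length $n-1$, and choosing $\Delta$ to fix all these strings up to length $n_1$ for the perturbed point; the finitely many remaining conditions are then unchanged, and any genuinely new identifications occur only at orders $> n_1 > n_0$, which cannot produce a shorter connection than the one already available at order $n_0$. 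A secondary, purely bookkeeping point is ensuring $x \ne p$ is used so that $b > a$ strictly and $\Delta$ can also be taken smaller than, say, $d(x,p)/10$, so the perturbed points remain distinct from $p$ and the minimal height interval does not degenerate.
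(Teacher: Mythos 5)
Your proposal is correct and follows essentially the same strategy as the paper's proof: you fix the Cantor address of $y_{2}$ to agree with that of $x_{2}$ up to a depth $n_{1}$ exceeding the minimal order $N$ of wormhole needed to join $x$ to $p$, so the low-order connectivity requirements are unchanged, while the extra adjustments from $x_{2}$ to $y_{2}$ use only wormholes of order $>n_{1}$, which are $2/3^{n}$-dense in height and hence can be inserted into the heights already traversed. The paper phrases this as a direct surgery on geodesics, localizing the extra jumps in $(p_{1},p_{1}+D_{N}^{+}(p_{1}))$ or $(p_{1}-D_{N}^{-}(p_{1}),p_{1})$, whereas you phrase it via Proposition \ref{prop_distance} and equality of minimal height intervals; the one point you should make fully explicit is that $n_{1}$ must be chosen with $2/3^{n_{1}}$ smaller than the length of the minimal height interval for $p$ and $x$ (which is positive since $x\neq p$), as this is what guarantees the new order-$>n_{1}$ identifications neither shorten nor lengthen the connection.
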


\begin{proof}
	We divide into cases depending on whether a wormhole is needed to connect $p$ to $x$. Suppose no wormhole level is needed to connect $p$ to $x$, namely $x_{2}=p_{2}$. Choose $0<\Delta<\frac{1}{2}|x_{1}-p_{1}|$ sufficiently small that if $y_{2}\in K$ with $|y_{2}-x_{2}|<\Delta$, then the wormhole levels required to join $p_{2}$ to $y_{2}$ can be found in $(p_{1},p_{1}+\frac{1}{2}(x_{1}-p_{1}))$ if $x_{1}>p_{1}$ or in $(p_{1}-\frac{1}{2}(p_{1}-x_{1}),p_{1})$ if $x_{1}<p_{1}$. Then for every $t\in (-\Delta,\Delta)$, 
	\[d_{p}[x_{1}+t,x_{2}]=d_{p}[x_{1}+t,y_{2}]=|(x_{1}+t)-p_{1}|.\]
	This proves the lemma in the case no wormholes are needed to connect $p$ to $x$.
	
	Now suppose wormholes are needed to connect $p$ to $x$, namely $x_{2}\neq p_{2}$. Define
	\[N:=\min \{n\in \bbN: \mbox{a wormhole of level $n$ is required to join $p$ to $x$}\}.\]
	Choose $\Delta > 0$ sufficiently small that for all $y_{2}\in K$ with $|y_{2}-x_{2}|<\Delta$:
	\begin{enumerate}
		\item If $p_{1}<1$ and $D_{N}^{+}(p_{1})<\infty$, every wormhole level needed to join $x_{2}$ to $y_{2}$ can be found at heights in $(p_{1},p_{1}+D_{N}^{+}(p_{1}))$.
		\item If $p_{1}>0$ and $D_{N}^{-}(p_{1})<\infty$, every wormhole level needed to join $x_{2}$ to $y_{2}$ can be founded at heights in $(p_{1}-D_{N}^{-}(p_{1}),p_{1})$.
		\item A wormhole level in $J_{N}$ is required to connect $p_{2}$ to $y_{2}$.
	\end{enumerate}
	Now fix $t\in (-\Delta, \Delta)$ and $y_{2}\in K$ with $|y_{2}-x_{2}|<\Delta$. 
	
	Let $\gamma$ be a geodesic from $p$ to $[x_{1}+t,x_{2}]$. Using the definition of $N$, either
	\begin{itemize}
		\item[(a)] $p_{1}<1$, $D_{N}^{+}(p_{1})<\infty$, $\gamma$ passes through all heights in $(p_{1},p_{1}+D_{N}^{+}(p_{1}))$, or
		\item[(b)] $p_{1}>0$, $D_{N}^{-}(p_{1})<\infty$, $\gamma$ passes through all heights in $(p_{1}-D_{N}^{-}(p_{1}),p_{1})$.
	\end{itemize}
	Using (1) and (2), we may modify $\gamma$ without changing its length to obtain a curve $\widetilde{\gamma}$ connecting $p$ to $[x_{1}+t,y_{2}]$. This gives $d_{p}[x_{1}+t,y_{2}] \leq d_{p}[x_{1}+t,x_{2}]$.
	
	Conversely, let $\eta$ be a geodesic from $p$ to $[x_{1}+t,y_{2}]$. Using (3), it follows that (a) or (b) hold again. Using (1) and (2), we can modify $\eta$ without changing its length to obtain a curve $\widetilde{\eta}$ connecting $p$ to $[x_{1}+t,x_{2}]$. This gives $d_{p}[x_{1}+t,y_{2}] \geq d_{p}[x_{1}+t,x_{2}]$.
	
	Combining the two inequalities concludes the proof.
\end{proof}

The proof of the following lemma is similar to that of Lemma \ref{claim1}.

\begin{lemma}\label{claim2}
	Fix $p\in F$. Suppose $x\in F\setminus\{p\}$ is a wormhole. Fix $x_{1}\in I$ and $x_{2}, x_{2}'\in K$ with $x_{2}<x_{2}'$ such that  $x=[x_{1},x_{2}]=[x_{1},x_{2}']$. Then there exists $0<\Delta<1$ such that, for $|t|<\Delta$:
	\begin{itemize}
		\item If $y_{2}\in K$ with $|y_{2}-x_{2}|<\Delta$, then $d_{p}[x_{1}+t,x_{2}]=d_{p}[x_{1}+t,y_{2}]$.
		\item If $y_{2}\in K$ with $|y_{2}-x_{2}'|<\Delta$, then $d_{p}[x_{1}+t,x_{2}']=d_{p}[x_{1}+t,y_{2}]$.
	\end{itemize}
\end{lemma}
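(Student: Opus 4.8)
The plan is to adapt the proof of Lemma~\ref{claim1}, carried out separately on each of the two representatives $(x_1,x_2)$ and $(x_1,x_2')$ of the wormhole $x$, taking $\Delta$ to be the smaller of the two constants obtained. I describe the argument for $(x_1,x_2)$; the one for $(x_1,x_2')$ is identical, with the roles of $x_2,x_2'$ (and of the two level-$K$ Cantor pieces containing them) interchanged. First I would shrink $\Delta$ so that: (i) $\{y_2\in K:|y_2-x_2|<\Delta\}$ lies in the level-$K$ Cantor piece of $x_2$ and is disjoint from $\{y_2\in K:|y_2-x_2'|<\Delta\}$; and (ii) every point $[x_1+t,x_2]$ or $[x_1+t,y_2]$ occurring in the statement differs from $p$, which is possible since $x\ne p$ and all such points are within $O(\Delta)$ of $x$ in the metric $d$.

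The conceptual heart is the following. By Proposition~\ref{prop_distance} (via Proposition~\ref{prop_geodesic}), $d_p(z)$ is controlled by a minimal height interval for $p$ and $z$: the shortest interval of heights containing $h(p)$, $h(z)$ and every wormhole level needed to connect $p$ and $z$ by a path. If $c,c'\in K$ satisfy $|c-c'|<3^{-m}$ they lie in a common Cantor piece $K_a$ with $|a|\ge m$, so every wormhole level needed to connect their columns has order $>m$; moreover any level-$n$ identification can be realized at any element of $J_n$ (all level-$n$ identifications occur at each such height), and every nonempty open subinterval of $(0,1)$ meets $J_n$ for all large $n$ (from Lemma~\ref{analysis}(1)). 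Hence, for $\Delta$ small, passing from the column of $x_2$ to that of a nearby $y_2$ requires only wormhole levels of order $>K$, all realizable inside the interior of any nondegenerate height interval already present. Since the minimal height interval for $p$ and $[x_1+t,x_2]$ is nondegenerate (its endpoints differ because $[x_1+t,x_2]\ne p$) and $x_2,y_2$ agree as Cantor addresses at all scales coarser than $m$, that same interval is a minimal height interval for $p$ and $[x_1+t,y_2]$; Proposition~\ref{prop_distance} then gives $d_p[x_1+t,x_2]=d_p[x_1+t,y_2]$.

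In practice I would package this exactly as in Lemma~\ref{claim1}: split into the case where no wormhole level beyond the order-$K$ identification at $x_1$ is needed to connect $p$ to $x$ (in particular $p_2=x_2$), where the relevant distances reduce to $|(x_1+t)-p_1|$ after taking $\Delta<\tfrac12|x_1-p_1|$; and the case where a further level is needed, where one sets $N$ to be its minimal order, notes that a geodesic from $p$ to $[x_1+t,x_2]$ traverses all of the detour interval $(p_1,p_1+D_N^+(p_1))$ or $(p_1-D_N^-(p_1),p_1)$, splices the finitely many order-$>K$ identifications carrying $x_2$ to $y_2$ into the geodesic inside that interval without changing its length, and argues symmetrically for the opposite inequality, using that a geodesic from $p$ to $[x_1+t,y_2]$ still needs a level-$N$ wormhole and hence still crosses the same interval.

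The step I expect to require the most care is the bookkeeping ensuring that the order-$K$ identification at the height $x_1$ of $x$ itself is inert — it is an endpoint of the connection, not an obstruction, just as a wormhole endpoint is treated in the proof of Proposition~\ref{diff}, Case~2 — and that, since all refining wormhole levels have order strictly greater than $K$, the two representatives $x_2,x_2'$ are genuinely independent. Everything else is a faithful transcription of the proof of Lemma~\ref{claim1}.
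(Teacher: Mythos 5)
Your proposal is correct and matches the paper's intent exactly: the paper gives no proof of Lemma \ref{claim2}, remarking only that it is ``similar to that of Lemma \ref{claim1}'', and your argument is precisely that adaptation, run separately on each of the two representatives $(x_1,x_2)$ and $(x_1,x_2')$ with the minimum of the two resulting constants taken as $\Delta$. The splicing of the high-order wormhole levels into a nondegenerate minimal height interval, and the converse inequality via the fact that a geodesic to $[x_1+t,y_2]$ must still cross the same detour interval, are exactly the mechanisms of the paper's proof of Lemma \ref{claim1}.
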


\begin{proposition}\label{teo_derivimpliesdiff}
	Fix $p\in F$. Then the map $d_{p}\colon F\to \bbR$ is differentiable at a point $x\in F\setminus\{p\}$ if and only if the directional derivative $(d_{p})_{I}(x)$ exists.
\end{proposition}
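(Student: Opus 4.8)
The plan is to prove both implications. One direction is free: by Lemma \ref{lemma_deriv_implies_diff}, differentiability of $d_p$ at $x$ immediately gives existence of $(d_p)_I(x)$. So the content is the converse: assuming $(d_p)_I(x)$ exists, I want to upgrade directional differentiability to full differentiability. The key structural fact, supplied by Lemmas \ref{claim1} and \ref{claim2}, is that near a point $x$ which is not $p$, the value of $d_p$ at a point $[x_1+t, y_2]$ depends only on the height $x_1 + t$ (for $y_2$ in the relevant Cantor-set neighbourhood, and on the correct side if $x$ is a wormhole). This means $d_p$ restricted to a small neighbourhood of $x$ in $F$ is, in a precise sense, a function of height alone, composed with the height map.

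First I would handle the case where $x$ is not a wormhole. Let $\Delta$ be as in Lemma \ref{claim1}, so that $d_p[x_1+t, y_2] = d_p[x_1+t, x_2]$ whenever $|t| < \Delta$ and $|y_2 - x_2| < \Delta$. Define $g(t) := d_p[x_1 + t, x_2]$ for $|t| < \Delta$; this is a $1$-Lipschitz function of the real variable $t$, and the hypothesis says $g'(0) = (d_p)_I(x)$ exists. Now take any $y = [y_1, y_2] \in F$ close to $x$. For $y$ sufficiently close, $|y_1 - x_1| < \Delta$ and the representative of $y_2$ can be chosen with $|y_2 - x_2| < \Delta$ (because geodesically-close points have height close to $x_1$, and by Lemma \ref{claim1} one may also relabel so the second coordinate is close to $x_2$ — more carefully, every point within small distance of $x$ has a representative satisfying this). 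Then
\[
\frac{d_p(y) - d_p(x) - g'(0)(h(y) - h(x))}{d(y,x)} = \frac{g(y_1 - x_1) - g(0) - g'(0)(y_1 - x_1)}{d(y,x)}.
\]
Since $d(y,x) \geq |y_1 - x_1| = |h(y) - h(x)|$, the absolute value of the right-hand side is bounded by $|g(y_1-x_1) - g(0) - g'(0)(y_1-x_1)| / |y_1 - x_1|$, which tends to $0$ as $y \to x$ by differentiability of $g$ at $0$ — unless $y_1 = x_1$, in which case the numerator is exactly $0$. Hence $d_p$ is differentiable at $x$ with $Df(x) = g'(0)$.

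For the wormhole case I would argue the same way but using Lemma \ref{claim2}: a point $y$ near the wormhole $x$ lies (for a geodesically small neighbourhood) near one of the two representatives $[x_1, x_2]$ or $[x_1, x_2']$, and on each side $d_p$ agrees with the corresponding one-variable function $g$. Since $(d_p)_I(x)$ exists, by Definition \ref{def_vert_deriv} the left and right one-variable derivatives $(d_p)_L(x)$ and $(d_p)_R(x)$ exist and are equal to a common value $\ell$; so both one-variable restrictions are differentiable at $0$ with the same derivative $\ell$, and the estimate above goes through uniformly over the two sides, giving differentiability with $Df(x) = \ell$. The main obstacle — really a bookkeeping point rather than a deep one — is justifying that every $y \in F$ within small $d$-distance of $x$ admits a representative $(y_1, y_2)$ with $|y_1 - x_1|$ and $|y_2 - x_2|$ (or $|y_2 - x_2'|$) both small, so that Lemma \ref{claim1} or \ref{claim2} applies; this follows from Proposition \ref{prop_geodesic} and Proposition \ref{prop_distance}, since a geodesic from $x$ to a nearby $y$ stays within a height interval of length comparable to $d(x,y)$ and therefore cannot traverse a wormhole that would move the $K$-coordinate far from $x_2$.
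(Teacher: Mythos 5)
Your proposal is correct and takes essentially the same route as the paper: the forward direction is immediate, and the converse uses Lemma \ref{claim1} (resp.\ Lemma \ref{claim2} at wormholes) to see that near $x$ the map $d_p$ is a function of height alone, then combines the one-variable differentiability at $x_1$ with the inequality $|h(y)-h(x)|\leq d(y,x)$. The ``bookkeeping point'' you flag --- that every $y$ sufficiently close to $x$ has a representative $[x_1+t,y_2]$ with $|t|$ small and $y_2$ close to $x_2$ (or $x_2'$) --- is exactly the step the paper also asserts without detailed justification, and your reason for it (a short geodesic stays in a small height window around $x_1$, so it can only use high-order wormholes, which keep the $K$-coordinate in the small Cantor cylinder containing $x_2$) is the correct one.
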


\begin{proof}
	Clearly if $d_{p}$ is differentiable at a point $x\in F$, then the directional derivative $(d_{p})_{I}(x)$ exists. We show the converse. Suppose $x\in F$ and $D:=(d_{p})_{I}(x)$ exists. 
	
	Assume $x$ is not a wormhole and let $\varepsilon>0$. Using the definition of the directional derivative, we can find $\delta>0$ such that whenever $t\in I$ with $0<|t|<\delta$ we have
	\begin{equation*}\label{direcdiffJan28}
		|d_{p}[x_{1}+t,x_{2}]-d_{p}[x_{1},x_{2}]-tD|<\varepsilon|t|.
	\end{equation*}
	Fix $\Delta >0$ as in Lemma \ref{claim1}. If $0<|t|<\min(\delta,\Delta)$ and $y_{2}\in K$ with $|y_{2}-x_{2}|<\Delta$,
	\[|d_{p}[x_{1}+t,y_{2}]-d_{p}[x_{1},x_{2}]-tD|<\varepsilon|t|.\]
	Every $y\in F$ sufficiently close to $x$ can be represented as $y=[x_{1}+t,y_{2}]$ where $0<|t|<\min(\delta,\Delta)$ and $y_{2}\in K$ with $|y_{2}-x_{2}|<\Delta$. In addition $t=h(y)-h(x)$ and $|t|\leq d(x,y)$. Hence for all $y$ close enough to $x$ we have
	\begin{equation}\label{diffjan28}
		|d_{p}(y)-d_{p}(x)-D(h(y)-h(x))|<\varepsilon d(x,y).
	\end{equation}
	Hence $d_{p}$ is differentiable at $x$ with derivative $D = (d_p)_I(x)$.
	
	If $x$ is a wormhole the proof is similar, using Lemma \ref{claim2} instead.
\end{proof}

Due to Proposition \ref{teo_derivimpliesdiff} we can reduce the study of the differentiability of $d_p(\cdot)$ to the study of the directional derivative. Before doing so we give two definitions.

\begin{definition}\label{def_vertical_line}
For each point $p=[p_{1},p_{2}]\in F$, we define the sets of points which can be reached from $p$ using different numbers of jumps as follows.
\begin{itemize}
\item We define $V_{0}^{p}$ to be the set of points which can be reached starting at $p$ with no additional jumps. If $p$ is not a wormhole then $V_{0}^{p}$ is a single  line in the $I$ direction through $p$. If $p$ is a wormhole then $V_{0}^{p}$ consists of two lines.

\item If $N$ is a positive integer and $p$ is not a wormhole of level $N$, we define $V_{N}^{p}$ to be the set of all points in $F$ that can be reached from $p$ by jumping exactly once through a wormhole of level $N$. More explicitly, if $p=[p_1,p_2]\in F$ is not a wormhole then
	\[
	V_N^p = \{[t,p_2\pm 2/3^N]\,:\,t\in[0,1] \},
	\]
	where the choice of $+$ or $-$ is uniquely determined by $p_2$ and $N$. Similarly if $p=[p_1,p_2]=[p_1,p'_2]$ with $p_{2}\neq p_{2}'$ is a wormhole of level $M\neq N$, then $V_{N}^{p}$ is composed of two vertical lines:
	\[V_N^p = \{[t,p_2\pm 2/3^N]: t\in[0,1] \} \cup \{[t,p_{2}'\pm 2/3^N]: t\in[0,1] \},\]
	where the sign is determined by $p$ and $N$.

	\item More generally, let $N_1<N_2<\ldots$ be positive integers and assume $p$ is not a wormhole of level $N_{i}$ for any $i$. We define $V_{\Delta}^p$ to be the collection of points in $F$ which can be reached from $p$ by jumping exactly once in each wormhole of level $N_1,N_2,\cdots$. More explicitly, let $
	\Delta = \pm\frac{2}{3^{N_1}}\pm\frac{2}{3^{N_2}}\pm\ldots$
	where the signs are uniquely determined by $p$ and $N_{1}, N_{2}, \cdots$. If $p$ is not a wormhole, then
	\[
	V_{\Delta}^p = \left\{ \left[t,p_2+\Delta\right]: t\in[0,1] \right\}.
	\]
	Similarly if $p=[p_1,p_2]=[p_1,p'_2]$ with $p_{2}\neq p_{2}'$ is a wormhole, then
	\[V_{\Delta}^p = \left\{ \left[t,p_2+\Delta\right]: t\in[0,1] \right\}\cup \left\{ \left[t,p_{2}'+\Delta\right]: t\in[0,1] \right\}.\]
\end{itemize}
\end{definition}

\begin{definition}\label{def_up_ending}
	An \emph{upward going segment} in $F$ is a curve $\gamma:[a,b]\to F$ of the form $\gamma(t)=[\lambda +t, \mu]$ for some $\lambda\in[0,1]$, $\mu\in K$. Similarly a \emph{downward going segment} is a curve $\gamma:[a,b]\to F$ of the form $\gamma(t)=[\lambda -t, \mu]$ for some $\lambda\in[0,1]$, $\mu\in K$.
	
A curve $\gamma\colon [a,b]\to F$ is \emph{upward (downward) ending} if there exists $\delta>0$ such that the restriction of $\gamma$ to $[b-\delta, b]$ is an upward (downward) going segment.
\end{definition}

We divide the study of the directional derivative of $d_p(\cdot)$ into four steps, depending on how many jumps are required to join $p$ to the point under consideration. The following proposition is immediate.


\begin{proposition}\label{vp0}
Let $p\in F$. Then there is only one point in $V_{0}^{p}$ where the directional derivative of $d_p(\cdot)$  does not exist, namely the point $p$ itself.
\end{proposition}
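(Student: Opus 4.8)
\textbf{Proof plan for Proposition \ref{vp0}.}

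The plan is to analyze $d_p$ restricted to $V_0^p$ directly using the explicit formula for the distance from Proposition \ref{prop_distance}, and to show that away from $p$ this restriction is locally an affine function of the height, so the directional derivative exists; then to show it fails at $p$ itself by the standard observation that $d_p$ has a ``corner'' there.

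First I would fix a point $x \in V_0^p$ with $x \neq p$. Since $x \in V_0^p$, no wormhole jump is needed to connect $p$ to $x$: they lie on a common line in the $I$-direction (or, if $p$ is a wormhole, on one of the two lines through $p$). Write $x = [x_1, x_2]$ with $x_1 \neq p_1$; say $x_1 > p_1$, the other case being symmetric. Then for $t$ with $|t|$ small enough that $x_1 + t$ stays strictly between $p_1$ and $1$ and on the same side of $p_1$, the point $[x_1+t, x_2]$ is still joined to $p$ by a geodesic that simply runs along the $I$-direction with the requisite wormhole excursions, and Proposition \ref{prop_distance} (or directly Proposition \ref{prop_geodesic}) shows that the minimal height interval $[a,b]$ for $p$ and $[x_1+t,x_2]$ varies affinely: in fact $d_p[x_1+t, x_2]$ differs from $d_p[x_1,x_2]$ by exactly $t$ once $t$ is small (the upper endpoint $b$ and lower endpoint $a$ of the minimal height interval stabilize, and $|h - p_1|$ changes by $t$). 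Hence the difference quotient in \eqref{limit_vert_deriv} is identically $1$ (resp.\ $-1$ if $x_1 < p_1$) for all small $t$, so $(d_p)_I(x)$ exists. If $p$ is a wormhole and $x$ lies on one of its two lines, the same computation applies on that line, and if $x$ itself happens to be a wormhole one checks $f_L$ and $f_R$ separately by the same argument; this handles every $x \in V_0^p \setminus \{p\}$.

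It remains to show $(d_p)_I(p)$ does not exist. This is the one-variable fact that $t \mapsto d_p[p_1 + t, p_2]$ has different one-sided derivatives at $t = 0$. Concretely, for small $t > 0$ we have $d_p[p_1+t, p_2] = t$ (travel straight up, doing the short wormhole excursions needed, which cost nothing extra since such levels accumulate in any interval), giving right derivative $+1$; for small $t<0$ we get $d_p[p_1+t,p_2] = |t| = -t$, giving left derivative $-1$. Thus the limit \eqref{limit_vert_deriv} fails to exist at $p$. (If $p=0$ or $p=1$ the derivative is one-sided and equals $\pm1$, but then $p$ is an endpoint and the statement should be read accordingly; in the interior case this is the genuine corner.) I do not expect a serious obstacle here: the only mild care needed is the bookkeeping of minimal height intervals when $p$ (or $x$) is a wormhole, and the observation — used repeatedly in Section \ref{Maximaldirectional} — that any interval of positive length contains wormhole levels $J_n$ for all large $n$, so short vertical detours needed to reach nearby points of $K$ do not change distances. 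The main point to get right is simply that, for $x \neq p$, the minimal height interval is locally constant in the endpoint, which is exactly Proposition \ref{prop_geodesic}.
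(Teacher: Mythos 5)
Your core computation is the right ``immediate'' observation: every point of $V_0^p$ carries the same Cantor coordinate as $p$ (one of the two, if $p$ is a wormhole), so no jumps are needed at all and $d_p$ restricted to $V_0^p$ is exactly $|h(\cdot)-p_1|$; this is locally affine with slope $\pm1$ away from $p$ and has a corner at $p$. (The paper states the proposition without proof, and this is what it has in mind; your bookkeeping with minimal height intervals and ``requisite wormhole excursions'' is unnecessary, since for $x\in V_0^p$ there are no excursions, but it is harmless.) Your parenthetical about $p_1\in\{0,1\}$ is also worth keeping: there the limit in \eqref{limit_vert_deriv} is one-sided and does exist at $p$, so the statement must be read for interior $p_1$.

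The genuine gap is the sentence ``if $x$ itself happens to be a wormhole one checks $f_L$ and $f_R$ separately by the same argument; this handles every $x\in V_0^p\setminus\{p\}$.'' If $x=[x_1,p_2]\in V_0^p$ is a wormhole of level $N$, then by Definition \ref{def_vert_deriv} one of the two limits $f_L,f_R$ must be computed along the \emph{other} line $\{[s,p_2\pm 2/3^N]\}$, on which $d_p$ is not $|h(\cdot)-p_1|$, so ``the same argument'' does not apply. Indeed, at the first level-$N$ wormhole above $p$, namely $x=[p_1+D_N^+(p_1),p_2]$, one has $d_p[x_1+t,p_2\pm 2/3^N]=D_N^+(p_1)+|t|$ for all small $t$ (whether or not $D_N^-(p_1)$ is finite), so the limit along that second line fails and $(d_p)_I(x)$ does not exist in the sense of Definition \ref{def_vert_deriv}; these are exactly the points of non-differentiability identified in Claims \ref{claima} and \ref{claimb} of Proposition \ref{prop_vertical_1}, and they lie on $V_0^p$ as well as on $V_N^p$. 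So the claim taken literally fails at these countably many wormhole points and cannot be proved as you state it; the consistent reading --- the one under which the proposition is used in the proof of Theorem \ref{distancetheorem}, where such points are charged to $S_1$ via $V_N^p$ --- is that on $V_0^p$ one considers only the derivative of the restriction of $d_p$ to that line (equivalently, one excludes the wormholes $[p_1\pm D_N^{\pm}(p_1),p_2]$, which are treated by Proposition \ref{prop_vertical_1}). With that reading your argument is complete; without it, the wormhole case requires the separate analysis of Proposition \ref{prop_vertical_1}, not a repetition of the computation on the $V_0^p$ line.
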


We now study differentiability at points reached from $p$ by exactly one jump. First we make the following observation.


\begin{lemma}\label{lemma_double_geod}
	Let $p\in F$. Suppose for a point $q\neq p\in F$ there exists both a downward ending geodesic and an upward ending geodesic from $p$ to $q$. Then $d_p$ is not differentiable at $q$. 
\end{lemma}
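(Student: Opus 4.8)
The plan is to exploit the two geodesics to produce two sequences of points converging to $q$ along which any candidate derivative $D$ would be forced to take two incompatible values, contradicting the definition of differentiability in Definition \ref{differentiability}. Suppose $\gamma^{-}$ is a downward ending geodesic from $p$ to $q$ and $\gamma^{+}$ is an upward ending geodesic from $p$ to $q$. By Definition \ref{def_up_ending}, there is $\delta>0$ such that $\gamma^{-}$ restricted to its final portion is a downward going segment and $\gamma^{+}$ restricted to its final portion is an upward going segment; write these as $[\lambda^{-}-s,\mu^{-}]$ and $[\lambda^{+}+s,\mu^{+}]$ for small $s\geq 0$, both ending at $q$. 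So $q=[h(q),\mu^{-}]=[h(q),\mu^{+}]$, and there are points $q^{-}_{s}=[h(q)+s,\mu^{-}]$ on $\gamma^{-}$ (approaching $q$ from above in height, since $\gamma^{-}$ arrives at $q$ going downward) and $q^{+}_{s}=[h(q)-s,\mu^{+}]$ on $\gamma^{+}$ (approaching $q$ from below in height).

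The key computation is that along these geodesic points the distance to $p$ changes at rate $\mp 1$. Precisely, since $\gamma^{-}$ is a geodesic from $p$ to $q$ whose last segment is downward going, for small $s>0$ the point $q^{-}_{s}$ lies on $\gamma^{-}$ between $p$ and $q$, so $d_{p}(q^{-}_{s})=d_{p}(q)-s$; here $h(q^{-}_{s})-h(q)=s$, so the difference quotient $(d_{p}(q^{-}_{s})-d_{p}(q))/(h(q^{-}_{s})-h(q))=-1$. Similarly $q^{+}_{s}$ lies on $\gamma^{+}$ between $p$ and $q$, so $d_{p}(q^{+}_{s})=d_{p}(q)-s$ while $h(q^{+}_{s})-h(q)=-s$, giving difference quotient $+1$. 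Now if $d_{p}$ were differentiable at $q$ with derivative $D$, then testing \eqref{direcdiffJan28}-type limits along $y=q^{-}_{s}$ (where $d(y,q)=s$, since the relevant heights near $q$ contain the needed wormhole levels so that $d(q^-_s,q)=s$) forces
\[
\lim_{s\to 0^{+}}\frac{d_{p}(q^{-}_{s})-d_{p}(q)-D\,(h(q^{-}_{s})-h(q))}{d(q^{-}_{s},q)}=\lim_{s\to0^+}\frac{-s-Ds}{s}=-1-D=0,
\]
so $D=-1$; testing along $y=q^{+}_{s}$ forces $+1-D=0$, so $D=+1$. These are incompatible, so $d_{p}$ is not differentiable at $q$.

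I expect the only real care needed is in two bookkeeping points: first, verifying that the points $q^{\pm}_{s}$ genuinely lie on the respective geodesics for all small $s$ (this is exactly the content of "upward/downward ending" in Definition \ref{def_up_ending}, so it is immediate), and second, checking $d(q^{\pm}_{s},q)=s$ rather than something larger — i.e. that no wormhole below $q$'s scale is forced. This holds because $q^{\pm}_{s}$ and $q$ differ only by a height change of $s$ along a fixed line segment in the $I$ direction, and any interval of positive length contains wormhole levels of all sufficiently high order, so the geodesic from $q$ to $q^{\pm}_{s}$ is just that segment of length $s$ (cf. the arguments in the proof of Proposition \ref{diff}). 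Neither point is a genuine obstacle; the lemma is essentially forced once the two one-sided difference quotients are computed. I would also remark that the hypothesis $q\neq p$ is used only to ensure $d_{p}(q)>0$ so that $q^{\pm}_{s}$ can sit strictly between $p$ and $q$ on the geodesics.
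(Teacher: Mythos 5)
Your proposal is correct and follows essentially the same route as the paper: both arguments use points on the final upward/downward going segments of the two geodesics to show that $d_{p}$ decreases at unit rate as the height moves away from $h(q)$ on either side, which is incompatible with any derivative. The only (harmless) difference is that the paper phrases the contradiction as non-existence of the directional derivative $(d_p)_I(q)$ and then invokes that differentiability implies directional differentiability, whereas you plug the two sequences directly into the definition of differentiability using $d(q^{\pm}_{s},q)=s$ (which, incidentally, needs no wormhole-density argument: it follows from the vertical segment of length $s$ together with $d\geq |h(\cdot)-h(\cdot)|$).
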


\begin{proof}
	Let $p$ and $q=[q_1,q_2]$ be as in the hypothesis. Let $\gamma^u:[a,b]\to F$ be the upward ending geodesic and $\gamma^d:[a,b]\to F$ the downward ending one (we can take the same starting interval up to parametrizations). From Definition \ref{def_up_ending} there exists $\delta>0$ such that the restriction of both geodesics to the interval $[b-\delta,b]$ is respectively an upward ending or downward ending segment. Take $-\delta<t<\delta$. Then the point $[q_1+t,q_2]$ belongs to either $\gamma^{u}([b-\delta,b])$ or $\gamma^{d}([b-\delta,b])$ for $t<0$ or $t>0$ respectively. To define a geodesic that connects $p$ to $[q_1+t,q_2]$ we can use the restriction of $\gamma^u$ or $\gamma^d$ to the interval $[a,b-|t|]$. By construction, the length of this geodesic is the length of $\gamma^u$ (or $\gamma^d$) minus $|t|$. Hence $d_p([q_1+t,q_2])-d_p(q)=-|t|$ and the limit \eqref{limit_vert_deriv} does not exist.
\end{proof}

\begin{proposition}\label{prop_vertical_1}
	Let $p\in F$ and fix an integer $N>0$, different from $M$ if $p$ is a wormhole of level $M$. Then there are only a finite number of points in $V^p_N$ where the directional derivative of $d_p(\cdot)$ does not exist.
\end{proposition}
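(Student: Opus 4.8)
The plan is to reduce differentiability on $V_N^p$ to the directional derivative (via Proposition \ref{teo_derivimpliesdiff}) and then to understand, for a point $q\in V_N^p$, which geodesics from $p$ to $q$ look like near $q$. For concreteness suppose $p$ is not a wormhole, so $V_N^p$ is a single line $\{[t,p_2+2/3^N]\colon t\in[0,1]\}$ (the wormhole case for $p$ just doubles the argument, and the sign is fixed once and for all). Fix $q=[q_1,p_2+2/3^N]$. A geodesic from $p$ to $q$ must use the wormhole level $J_N$ exactly once (to switch from the $p_2$-line to the $(p_2+2/3^N)$-line), and by Proposition \ref{prop_geodesic} it has the shape: go from $p$ down to some height $a$, up to some height $b$, down to $q$, where $[a,b]$ is a minimal height interval of length $(d(p,q)+|h(p)-h(q)|)/2$, and where the jump at level $N$ occurs at some height in $[a,b]$ that actually lies in $J_N$. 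The key point is that near $q$ the geodesic is either \emph{upward ending} or \emph{downward ending} in the sense of Definition \ref{def_up_ending}: the last segment either approaches $q$ from below or from above.

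First I would record, using Proposition \ref{prop_distance} and an explicit computation of $d(p,[q_1\pm s,p_2+2/3^N])$ for small $s>0$, that the one-sided difference quotients of $d_p$ along $V_N^p$ at $q$ exist and take values in $\{+1,-1\}$; concretely, the right derivative is $+1$ if there is a geodesic to $q$ that is upward ending, and is $-1$ otherwise (and symmetrically for the left derivative). So $(d_p)_I(q)$ fails to exist exactly when both an upward ending geodesic and a downward ending geodesic from $p$ to $q$ exist — this is precisely the situation of Lemma \ref{lemma_double_geod}. Thus the set of bad points in $V_N^p$ is the set of $q$ admitting geodesics of both types.

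Next I would show this bad set is finite. A geodesic from $p$ to $q=[q_1,p_2+2/3^N]$ is determined by its minimal height interval $[a,b]$, which must contain a point of $J_N$ (where the level-$N$ jump happens) and all wormhole levels needed to realize the remaining $\pm 2/3^{N_i}$ coordinate changes; its length is $2(b-a)-|h(p)-h(q)|=2(b-a)-|p_1-q_1|$. Being a geodesic forces $[a,b]$ to be \emph{shortest} among such intervals, so $b-a$ is a fixed number $\ell$ depending only on $p,N,q_1$; hence $d_p(q)=2\ell-|p_1-q_1|$ and the bad points are those $q_1$ for which there exist two distinct minimal intervals $[a,b]$, one whose final descent to $q$ comes from above $q_1$ (i.e. $b>q_1$ with $q_1$ strictly interior, giving upward ending) and one from below. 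Since $p$ has a fixed finite "jump structure" near its two relevant heights $p_1$ and $p_1\pm 2/3^N$ (determined by the finitely many wormhole levels $\le$ some threshold that are forced), there are only finitely many candidate minimal intervals $[a,b]$ as $q_1$ ranges over $[0,1]$; equivalently, $q_1\mapsto \ell(q_1)$ is piecewise of the form $\max/\min$ of finitely many affine-in-$q_1$ expressions, and the transition points between "upward ending is optimal" and "downward ending is optimal" form a finite set. The bad set of $q_1$ is contained in this finite set of transition points, hence finite.

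The main obstacle is the bookkeeping in the last paragraph: one has to argue carefully that only finitely many distinct geodesic shapes (minimal height intervals together with their jump schedules) can occur as $q$ varies over $V_N^p$, and that the "upward ending vs. downward ending" dichotomy changes only finitely often. The cleanest way to handle this is to observe that for $q_1$ in a fixed small interval not containing $p_1$ or $p_1\pm 2/3^N$, the combinatorial type of every geodesic from $p$ to $q$ is locally constant (the required wormhole levels and the side on which they are found do not change), so a geodesic that is upward ending at one such $q$ remains upward ending for all nearby $q$ of that type; hence the bad set is both closed and locally finite in the complement of the finitely many special heights, forcing it to be finite once one adds back those special heights. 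I would also need the easy observations that $q=p$ itself (if $p\in V_N^p$, which it is not since $p\notin V_N^p$ here) and the endpoints $h=0,1$ contribute at most finitely many extra points, which is immediate.
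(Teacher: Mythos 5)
Your reduction to one‑sided derivatives along $V_N^p$ is a reasonable start, but the central claimed equivalence --- that $(d_p)_I$ fails to exist at $q\in V_N^p$ exactly when $q$ admits \emph{both} an upward ending and a downward ending geodesic from $p$ --- is false, and the failure occurs precisely at the points the paper's proof spends most of its effort on. Take $p$ not a wormhole, suppose $D_N^{+}(p_1)$ exists, and let $q=x_1:=[p_1+D_N^{+}(p_1),\,p_2\pm 2/3^N]$, the level‑$N$ wormhole directly above $p$. Every geodesic from $p$ to $x_1$ goes straight up the line through $p$, so it is upward ending, and there is no downward ending geodesic. Nevertheless, to reach a point of $V_N^p$ slightly \emph{below} the wormhole one must still climb up to the wormhole and come back down (or use the level‑$N$ wormhole at $p_1-D_N^{-}(p_1)$, which is strictly longer when it exists at all), so $d_p[p_1+D_N^{+}(p_1)+t,\,p_2\pm2/3^N]=D_N^{+}(p_1)+|t|$ for small $t$: the one‑sided derivatives along $V_N^p$ are $-1$ and $+1$, and $(d_p)_I(x_1)$ does not exist (in the language of Definition \ref{def_vert_deriv}, $f_L(x_1)=1$ but $f_R(x_1)$ fails to exist). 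Your rule ``one‑sided derivative determined by existence of an up/down ending geodesic'' breaks here because the upward ending geodesic arrives along the \emph{other} line through the wormhole, so its restriction says nothing about $d_p$ on $V_N^p$ just below $q$. Concretely, in the case $D_N^{-}(p_1)=\infty$ your characterization predicts no bad points at all on $V_N^p$, whereas there is exactly one; when both $D_N^{\pm}(p_1)$ exist you would find only the double‑geodesic point $x_3=[p_1+D_N^{+}-D_N^{-},\,p_2']$ and miss the two wormhole points $x_1,x_2$.

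So Lemma \ref{lemma_double_geod} is only a sufficient criterion: non‑differentiability also occurs at points where $d_p|_{V_N^p}$ has a local minimum (geodesic ``splitting'' points), which are exactly the level‑$N$ wormholes reachable from $p$. The paper handles this by an explicit case analysis on which of $D_N^{\pm}(p_1)$ exist, computing $d_p$ near each candidate and verifying differentiability at all remaining points, finding at most three bad points per line. Your second step (finiteness via finitely many combinatorial types of minimal height intervals / transition points of the up--down dichotomy) is plausible in spirit, and the splitting points do happen to be such transition points; but as written that step is only invoked to bound the set of ``both‑type'' points, so it does not repair the missing case. To fix the argument you would need to prove directly that every point of non‑differentiability is a transition point of the ending‑direction dichotomy (treating level‑$N$ wormholes, where the two lines through the point must be analyzed separately, as in Definition \ref{def_vert_deriv}), or simply carry out the explicit computation as the paper does.
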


\begin{proof}
	Let $p=[p_1,p_2]$ and $N$ be as in the hypothesis. We prove the proposition in the case $p$ is not a wormhole. If $p$ is a wormhole then the proof is similar except the argument is repeated twice, once for each vertical line in $V_N^p$. We split the argument into several cases.
	
\begin{claim}\label{claima}
Suppose $D_N^+(p_1)$ exists but $D_N^-(p_1)$ does not. Then there is only one point in $V_N^p$ at which $(d_p)_I$ does not exist. This point is $x=[p_1+D_N^+(p_1),p_2]$.

A similar statement holds if  $D_N^-(p_1)$ exists and $D_N^+(p_1)$ does not. Then the point of non-differentiability is $x=[p_1-D_N^-(p_1),p_2]$.
\end{claim}
	
\begin{proof}
Suppose that only $D_N^+(p_1)$ exists; the other case is similar. First we show that $d_p(\cdot)$ is not differentiable at the point $x$ defined above. A point $y\in V_N^p$ close to $x$ can be written as $y=[p_1+D_N^+(p_1)+t,p_2']$, where $p_2'=p_2\pm\frac{2}{3^N}$ with the choice of $+$ or $-$ uniquely determined by $p_2$. Note that $t$ can take both positive and negative values. To go from $p$ to $y$ with a geodesic we must start from $p$, go up to height $p_1+D_N^+(p_1)$, jump to the line $V_N^p$ through the wormhole $x$ and then go up or down by height $t$ to the point $y$. This has the effect of adding a segment of length $|t|$ to the original geodesic that connected $p$ to $x$. Since $D_N^-(p_1)$ does not exist, this new path is clearly a geodesic that connects $p$ to $y$ and $d_p(y)-d_p(x)=|t|$. Hence the limit \eqref{limit_vert_deriv} for the function $d_p(\cdot)$ does not exists at $x$. 

We are left to show that $(d_p)_I$ exists at any other point $y\in V_N^p$. Take a point $y=[y_1,p_2']\in V_N^p$ with $y_1>p_1+D_N^+(p_1)$. For $t$ such that $p_1+D_N^+(p_1)<y_1+t\leq 1$ we have $d_p([y_1+t,p_2])-d_p(y)=t$ so the limit \eqref{limit_vert_deriv} exists and is equal to 1. Similarly, if $y_1<p_1+D_N^+(p_1)$, we see that  $d_p([y_1+t,p_2])-d_p(y)=-t$ for $t$ sufficiently close to $0$ so the limit \eqref{limit_vert_deriv} exists and is equal to $-1$. Hence $d_p(\cdot)$ is differentiable at any point $y\neq x\in V_N^p$.
\end{proof}
	
	\smallbreak

	\begin{claim}\label{claimb}
	Suppose that both $D_N^+(p_1)$ and $D_N^-(p_1)$ exist. In this case there are exactly three points in $V_{N}^{p}$ at which $(d_p)_I$ does not exist: 
	\begin{itemize}
	\item $x_1:=[p_1+D_N^+(p_1),p_2']$,
	\item $x_2:=[p_1-D_N^-(p_1),p_2']$,
	\item $x_3:=[p_1+c_N,p_2']$, where $c_N:= D_N^+(p_1)-D_N^-(p_1)$.
	\end{itemize}
		\end{claim}
	
\begin{proof}
Note  $h(x_1)>h(x_3)>h(x_2)$. The proof that $(d_p)_I$ does not exist at the points $x_{1}, x_{2}$ is similar to the proof of Claim \ref{claima}. We show that $(d_p)_I$ does not exist at $x_{3}$.  To prove this we consider two paths $\gamma_{1}, \gamma_{2}$ from $p$ to $x_{3}$.

The first path $\gamma_{1}$ starts at $p$, goes up to height $p_1+D_N^+(p_1)$, jumps in the wormhole of level $N$ $x_1$, then goes down to height $p_1+c_N$. The length of $\gamma_1$ is $D_{N}^{+}(p_{1})+(D_{N}^{+}(p_{1})-c_{N})=D_{N}^{+}(p_{1})+D_{N}^{-}(p_{1})$. The second path $\gamma_{2}$ starts at $p$, goes down to height $p_1-D_N^-(p_1)$, jumps in the wormhole of level $N$ $x_2$, then goes up to height $p_1+c_N$. The length of $\gamma_2$ is $D_{N}^{-}(p_{1})+(c_{N}-D_{N}^{-}(p_{1}))=D_{N}^{-}(p_{1})+D_{N}^{+}(p_{1})$. Since both $\gamma_{1}$ and $\gamma_{2}$ have the same length and any geodesic from $p$ to $x_{3}$ must go through at least the same heights as for $\gamma_{1}$ or for $\gamma_{2}$, it follows $\gamma_{1}$ and $\gamma_{2}$ are both geodesics from $p$ to $x_{2}$. Since $\gamma_{1}$ ends downwards and $\gamma_{2}$ ends upwards, it follows by Lemma \ref{lemma_double_geod} that $d_{p}$ is not differentiable at $x_{3}$.

	
	It remains to prove that $d_p(\cdot)$ is differentiable at every point of $V_{N}^{p}$ distinct from $x_1,x_2$ and $x_3$. If $y=[y_1,p_2']\in V_N^p$ with $y_1>p_1+D_N^+(p_1)$ or $y_1<p_1-D_N^-(p_1)$, we can use the same argument as in the previous claim. The remaining points are of the type $[y_1,p_2']$ with $p_1-D_N^-(p_1)<y_1<p_1+D_N^+(p_1)$ and $y_1\neq p_1+C_N$. Take $y_1$ such that $p_1+c_N<y_1<p_1+D_N^+(p_1)$ (the other case works in the same way) and consider $t$ such that $p_1+c_N<y_1+t<p_1+D_N^+(p_1)$. Since the points $[y_1,p_2']$ and $[y_1+t,p_2']$ are both above $x_3$, a geodesic from $p$ to each point can be obtained by shortening the same downward ending geodesic that connects $p$ to $x_3$ from the previous step. Hence
\[d_p([y_1,p_2'])=d_p(x_3)-(y_1-p_1-c_N)\]
and
\[d_p([y_1+t,p_2'])=d_p(x_3)-(y_1+t-p_1-c_N).\]
Hence $d_p([y_1+t,p_2'])-d_p([y_1,p_2'])=-t$, i.e. the limit \eqref{limit_vert_deriv} exists and $d_p(\cdot)$ is differentiable at $y$.
	\end{proof}


This concludes the proof of the proposition.
\end{proof}

Now we can show what happens on vertical lines that two jumps away from $p$.

\begin{proposition}\label{prop_vertical_2}
	Let $p\in F$ and fix integers $0<N<M$, both different from $W$ if $p$ is a wormhole of level $W$. Then there exists a finite number of points in $V^p_{\Delta}$, where $\Delta = \pm\frac{2}{3^{N}}\pm\frac{2}{3^{M}}$ and the signs are uniquely determined by $p_2$, in which the vertical derivative of $d_p(\cdot)$ does not exist. 
\end{proposition}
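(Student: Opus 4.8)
The plan is to reduce the two-jump case to the one-jump case already handled in Proposition~\ref{prop_vertical_1}, by analyzing how a geodesic from $p$ to a point $y\in V_\Delta^p$ must look. Write $\Delta = \varepsilon_N \tfrac{2}{3^N} + \varepsilon_M \tfrac{2}{3^M}$ with the signs fixed by $p_2$, so that reaching $y=[y_1,p_2+\Delta]$ from $p$ requires jumping exactly once in a wormhole of level $N$ and once in a wormhole of level $M$ (with $N<M$). A geodesic from $p$ to $y$ first travels (up or down) along $V_0^p$ to some wormhole of level $N$, jumps onto $V_{\pm 2/3^N}^p$, then travels along that line to some wormhole of level $M$, jumps onto $V_\Delta^p$, and finally travels along $V_\Delta^p$ to $y$. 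The key point is that, since $M>N$, wormholes of level $M$ are dense on the relevant scale: on $V_{\pm 2/3^N}^p$ the function ``distance from $p$ restricted to that line'' is, by Proposition~\ref{prop_vertical_1}, differentiable except at finitely many points $x_1,x_2,x_3$, and the height at which one should jump in level $M$ is governed by which side of those exceptional points one is on.

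First I would set up notation: let $L$ be the line $V_{\pm 2/3^N}^p$, let $g\colon L\to\bbR$ be $g=d_p|_L$, and recall from Proposition~\ref{prop_vertical_1} that $g$ is differentiable on $L$ away from a finite set $E=\{x_1,x_2,x_3\}$ (or fewer points), where away from $E$ it has slope $\pm 1$ and is in fact piecewise of the form $\mathrm{const}\pm(\text{height})$. Second, I would observe that for any point $w$ on $V_\Delta^p$, a geodesic from $p$ to $w$ can be taken to pass through one of finitely many ``distinguished heights'' on $L$, namely the heights of the points in $E$ together with $p_1\pm D_N^\pm(p_1)$ (the heights at which a level-$N$ jump is possible), because between consecutive distinguished heights the level-$M$ wormholes are dense enough that one jumps onto $V_\Delta^p$ immediately and then travels straight to $w$. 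Third, I would show that this forces $d_p|_{V_\Delta^p}$ to be, away from a finite set, piecewise of the form $\mathrm{const}\pm(\text{height})$: on each subinterval of $V_\Delta^p$ determined by which distinguished height the optimal level-$M$ jump uses, $d_p$ is affine in the height with slope $\pm1$, so the directional derivative exists there. The only candidates for non-existence are the finitely many heights on $V_\Delta^p$ where the optimal jump strategy changes — these are the images under the ``project back to $V_\Delta^p$'' map of the heights in $E$ and of $p_1\pm D_N^\pm(p_1)$, together with heights where two geodesics of opposite ending direction coexist (handled by Lemma~\ref{lemma_double_geod}).

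The cleanest way to organize the casework is to split on which of $D_N^+(p_1)$, $D_N^-(p_1)$ exist and then, within each, on which of $D_M^+$, $D_M^-$ exist at the relevant heights on $L$; in each case Proposition~\ref{prop_vertical_1} applied to the line $L$ tells us the finite exceptional set there, and the map $w\mapsto(\text{optimal level-}M\text{ jump height for }w)$ is piecewise constant with finitely many pieces, so $V_\Delta^p$ is partitioned into finitely many subintervals on each of which $d_p$ is affine with slope $\pm 1$ in the height. Summing the exceptional sets over the finitely many cases gives the finiteness assertion. I would use Proposition~\ref{prop_distance} throughout to compute lengths ($d(x,y)=2b-2a-|h(x)-h(y)|$ for a minimal height interval $[a,b]$), which makes the ``affine in height with slope $\pm1$'' claims transparent: as long as the minimal height interval $[a,b]$ is locally constant, $d_p$ moves with slope exactly $\pm 1$.

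The main obstacle I anticipate is bookkeeping rather than a conceptual gap: one must verify that the minimal height interval $[a,b]$ needed to connect $p$ to a point of $V_\Delta^p$ really is locally constant in $y_1$ except at the finitely many transition heights, and in particular that no ``new'' wormhole level is forced to appear as $y_1$ varies. This is exactly where the density of level-$M$ (and higher) wormholes on the relevant small scale is used, analogously to the repeated use of ``in any interval of length $2/3^N$ we can find elements of $J_n$ for all $n\ge N$'' in the proof of Proposition~\ref{diff}. Once that local constancy is established, the rest follows from Proposition~\ref{prop_vertical_1}, Lemma~\ref{lemma_double_geod}, and Proposition~\ref{prop_distance} as above.
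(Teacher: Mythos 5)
Your overall route is sound and genuinely different in organization from the paper's. The paper proceeds by an exhaustive case analysis on which of $D_N^{\pm}(p_1),D_M^{\pm}(p_1)$ exist and on their ordering, and in each case it explicitly lists the points of non-differentiability (up to seven of them), proving non-differentiability at each by exhibiting either a splitting wormhole (distance behaving like $d_p(x)+|t|$) or two geodesics of opposite ending direction (Lemma \ref{lemma_double_geod}), and proving differentiability everywhere else by comparing competitor geodesics via Proposition \ref{prop_distance}. You instead argue for an upper bound: the level-$N$ jump may be taken at one of the two heights $p_1\pm D_N^{\pm}(p_1)$, so $d_p$ restricted to $V_\Delta^p$ is a minimum of finitely many candidate length functions, each piecewise affine in the height with slope $\pm 1$ and with finitely many kinks; hence $d_p|_{V_\Delta^p}$ has finitely many breakpoints and the directional derivative exists off a finite set. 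Since the proposition only asserts finiteness, this suffices and avoids most of the paper's casework (you never need, for instance, the paper's observation that the ordering $-D_M^-<-D_N^-<D_N^+<D_M^+$ is impossible); what it gives up is the exact identification of the exceptional points, which the paper obtains but does not actually need later (only countability of $h(B_p)$ is used).

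Two details need repair before this is a proof. First, your explicit list of candidate breakpoints is incomplete: projecting the exceptional set $E$ of $d_p|_{V_N^p}$ to $V_\Delta^p$ and adding the double-geodesic points of Lemma \ref{lemma_double_geod} misses the heights $p_1-D_M^-(p_1)$ and $p_1+D_M^+(p_1)$ (the paper's $x_3$ and $x_7$ in its Case (b)), where $d_p|_{V_\Delta^p}$ has a genuine kink of local-minimum type $d_p(x)+|t|$; these are not heights of points of $E$, and only one geodesic (of a single ending direction) reaches them, so Lemma \ref{lemma_double_geod} does not detect them. Taken literally, your list would wrongly assert differentiability there. The fix is already implicit in your framework: the breakpoint set must include all kinks of each of the finitely many candidate functions, i.e.\ also the nearest level-$M$ wormhole heights relative to $p$, not only the level-$N$ data. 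Second, the phrase that level-$M$ wormholes are ``dense enough that one jumps onto $V_\Delta^p$ immediately'' is only valid at scales coarser than $3^{-M}$; between, say, $p_1-D_M^-(p_1)$ and $p_1$ there is by definition no level-$M$ wormhole, and it is exactly this failure of density at the finest scale that produces the extra breakpoints above. Likewise, the minimal height interval is not literally locally constant on each piece (one of its endpoints may be $y_1$ itself, or may track the nearest needed wormhole above or below $y_1$); what is locally constant is the combinatorial type of the interval, and Proposition \ref{prop_distance} then gives slope $\pm 1$ on each piece. With those corrections your argument goes through.
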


\begin{proof}

We prove the proposition under the assumption that $p=[p_{1},p_{2}]$ is not a wormhole. When $p$ is a wormhole we get twice as many points of non-differentiability and a remark similar to that for Proposition \ref{prop_vertical_1} applies. Since we will only consider jump levels relative to the point $p$, we use the simpler notation $D_N^{\pm}, D_M^{\pm}$ for $D_N^{\pm}(p_1), D_M^{\pm}(p_1)$.  There are several cases depending on which of $D_N^{\pm}, D_M^{\pm}$ exist and their relative positions. We first study the case when all four exist. 

\smallskip

\textbf{Suppose all four of $D_N^{\pm}, D_M^{\pm}$ exist.} Then we will have three possible cases. Note that the case $-D_M^-<-D_N^-<D_N^+<D_M^+$ cannot occur. Indeed, between the height of any two wormholes of level $N$ we can find the height of at least one wormhole of level $M$ for any $M>N$. 

\smallskip
	
\textbf{Case (a):} $-D_N^-<-D_M^-<D_M^+<D_N^+$. 

Given any $x\in V_{\Delta}^p$ consider a geodesic that connects $p$ to $x$. Such a geodesic must jump through a wormhole of level $N$ and a wormhole of level $M$. However, since we are in case $(a)$, between $p_1$ and the height of the first available wormhole of level $N$ there is always the height of a wormhole of level $M$. Suppose we follow the same geodesic that connects $p$ to $x$, except we jump only in the wormhole of level $N$ and not the wormhole of level $M$. We arrive at a point $x'$ with the following properties:
\begin{enumerate}
	\item $x'\in V_N^p$,
	\item $h(x)=h(x')$,
	\item $d_p(x)=d_p(x')$.
\end{enumerate}
Hence in $V_{\Delta}^p$ there are the same number of points at which $d_p$ is not differentiable as there are in $V_N^p$, hence finitely many.

\smallskip

\textbf{Case (b):} $-D_N^-<-D_M^-<D_N^+<D_M^+$.

Let $p_2'=p_2+\Delta$, $c_N := D_N^{+}-D_N^-$, and $c_M := D_M^{+}-D_M^-$. We will show that the following are all the points of non-differentiability for $d_{p}$ on $V_{\Delta}^{p}$:
\begin{equation}\label{x_even}
\begin{split}
&x_1=[p_1-D_N^-,p_2'],\ x_3=[p_1-D_M^-,p_2'],\\
&x_5=[p_1+D_N^+,p_2'],\ x_7=[p_1+D_M^+,p_2'],
\end{split}
\end{equation}
and
\begin{equation}\label{x_odd}
x_2 := [p_1+c_N,p_2'], \ x_4:=[p_1,p_2'], \ x_6:=[p_1+c_M,p_2'].
\end{equation}
Roughly speaking, $x_{1}, x_{3}, x_{5}, x_{7}$ are points where geodesics split, while $x_{2}, x_{4}, x_{6}$ are points which can be reached by both an upwards ending and downwards ending geodesic.

%

\begin{claim}\label{enumeration_points}
The sequence $h(x_{i})$ is increasing for $1\leq i\leq 7$.
\end{claim}

\begin{proof}
The inequality $h(x_1)<h(x_3)<h(x_5)<h(x_7)$ follows directly from the definition of the points and the hypotheses of Case (b). Similarly the inequality  $h(x_3)<h(x_4)<h(x_5)$ follows immediately from the definition of $x_4$. We are left to prove $h(x_1)<h(x_2)<h(x_3)$ and $h(x_5)<h(x_6)<h(x_7)$. Clearly
\[h(x_1) = p_1-D_N^{-} < p_1-D_N^{-} + D_N^+ = h(x_2)\]
and
\[h(x_6) = p_1+D_M^{+} -D_M^- < p_1+D_M^{+} = h(x_7).\]

To prove $h(x_2)<h(x_3)$ we need to show
\begin{equation}\label{Apr28.1}
D_{N}^{+}-D_{N}^{-}<-D_{M}^{-}.
\end{equation}
Note $D_{N}^{+}+D_{M}^{-}=1/3^M$ since it represent the distance from a wormhole of level $N$ to one of the nearest wormhole of level $M$. Moreover, by construction, $D_N^->1/3^M$, since the change in heights $D_{N}^{-}-D_{M}^{-}$ is equal to $1/3^{M}$. Hence $D_{N}^{+}+D_{M}^{-}<D_{N}^{-}$ as desired. This proves $h(x_2)<h(x_3)$.

To prove $h(x_{5})<h(x_{6})$ it suffices to show 
\begin{equation}\label{Apr28.2}
	D_{N}^{+}<D_{M}^{+}-D_{M}^{-},
\end{equation}
i.e. $D_{N}^{+}+D_{M}^{-}<D_{M}^{+}$. To see this note once again that $D_{N}^{+}+D_{M}^{-}=1/3^{M}$ and $D_{M}^{+}>D_{M}^{+}-D_{N}^{+}=1/3^{M}$, from which we conclude $D_{N}^{+}+D_{M}^{-}<D_M^+$ as desired.
\end{proof}

\begin{claim}\label{claim_even_points} 
$d_{p}$ is not differentiable at $x_{1}, x_{3}, x_{5}, x_{7}$.
\end{claim}

\begin{proof}
We first prove $d_{p}$ is not differentiable at $x_{1}$. We will show that there exists $\delta>0$ such that 
\[d_p([p_1-D_N^{-}+t,p_2'])-d_p(x_1)=|t| \mbox{ for }-\delta<t<\delta.\]
	
	For $0<t<p_1-D_N^-$, a minimal height interval for $p$ and $[p_1-D_N^{-} -t,p_2']$ is the interval $[p_1-D_N^{-} -t,p_1]$. Hence $d_p([p_1-D_N^{-} -t,p_2'])=D_N^{-}+t$ and since $d_p(x_1)=D_N^-$, we conclude that $d_p([p_1-D_N^{-}-t,p_2'])-d_p(x_1)=t$. Similarly, for $0<t<D_N^+$ a minimal height interval for the points $p$ and $[p_1-D_N^{-}+t,p_2']$ is still the interval $[p_1-D_N^-,p_1]$, whose length is $D_N^-$. Indeed if this were not the case then the only other possible choice would be the interval $[p_1-D_N^{-}+t,p_1+D_N^+]$ (we recall that a minimal height interval must contain the height of all the wormhole needed to connect the two points) whose length is $D_N^{-}+D_N^{+}-t$. This, together with our initial choice $0<t<D_N^+$, contradicts minimality since $D_N^{-}+D_N^{+}-t>D_N^-$. Hence the interval $[p_1-D_N^-,p_1]$ is still the minimal height interval for $p$ and $[p_1-D_N^{-}+t,p_2']$. If we use this to compute the distance we get $d_p([p_1-D_N^{-}+t,p_2'])=D_N^{-}+t$ from which we conclude that also in this case $d_p([p_1-D_N^{-}+t,p_2'])-d_p(x_1)=t$. We now define $\delta:=\min\{p_1-D_N^-,D_N^+\}$. What we proved so far is that, for $-\delta<t<\delta$, $d_p([p_1-D_N^{-}+t,p_2'])-d_p(x_1)=|t|$, hence the limit \eqref{limit_vert_deriv} does not exist and $d_p(\cdot)$ is not differentiable at $x_1$.
	
	For the point $x_{7}$ the proof is similar to that of $x_{1}$ with minimal height interval $[p_1,p_1+D_M^+]$.
	
	We now show $d_{p}$ is not differentiable at $x_{3}$. We claim the unique minimal height interval for a geodesic from $p$ to $x_{3}$ is $[p_1-D_M^-,p_1+D_N^+]$. Comparing the length of the geodesic from this minimal height interval with its competitors, it suffices to check
\begin{equation}\label{MinimalintervalApril25}
D_{N}^{+}+D_{N}^{+}+D_{M}^{-}<D_{N}^{-}+D_{N}^{-}-D_{M}^{-}.
\end{equation}
Rearranging and simplifying, this is equivalent to $D_{N}^{+}+D_{M}^{-}<D_{N}^{-}$ which is exactly \eqref{Apr28.1}. From this it follows that any geodesic from $p$ to $x_{3}$ starts at $p$, moves up to height $p_{1}+D_{N}^{+}$, jumps through the wormhole of level $N$, moves down to height $p_{1}-D_{M}^{-}$, then jumps through the wormhole of level $M$. Any geodesic from $p$ to a point $[p_1-D_M^{-}+t,p_2']$ near $x_{3}$ for sufficiently small $t$ follows the same path followed by a small movement in the $I$ direction. Hence $d_{p}([p_1-D_M^{-}+t,p_2'])-d_{p}(x_{3})=|t|$, leading to non-differentiability at $x_{3}$.

For the point $x_{5}$ the proof is similar to that of $x_{3}$ and the minimal height interval is again $[p_1-D_M^{-},p_1+D_N^{+}]$.
\end{proof}

\begin{claim}\label{claim_odd_points} 
$d_{p}$ is not differentiable at $x_{2}, x_{4}, x_{6}$.
\end{claim}

\begin{proof}
 We show $x_2,x_4$, $x_6$ satisfy the hypotheses of Lemma~\ref{lemma_double_geod}. We begin with $x_2$. Note $D_{N}^{+}-D_{N}^{-}<-D_{M}^{-}$ from \eqref{Apr28.1}. Hence $h(x_{2})<p_{1}-D_{M}^{-}$. To get a downward ending path $\gamma_d$ (respectively upwards path $\gamma_u$), connecting $p$ to $x_2$, we proceed as follows:
\begin{enumerate}
	\item start from $p$ and we go up to height $p_1+D_N^+$ (respectively down to $p_1-D_M^-$),
	\item jump with the wormhole of level $N$ (respectively $M$) found there,
	\item go down to height $p_1-D_M^-$ (respectively down to $p_1-D_N^-$),
	\item jump with the wormhole of level $M$ (respectively $N$) found there,
	\item go down (respectively up) to height $p_1+c_N$.
\end{enumerate}
To see the length of $\gamma_{u}$ and $\gamma_{d}$ are the same it suffices to check
\[D_{N}^{+}+D_{N}^{+}+D_{M}^{-}+(-D_{M}^{-}-c_{N})=D_{M}^{-}+(-D_{M}^{-}+D_{N}^{-})+(c_{N}+D_{N}^{-})\]
or equivalently
\[2D_{N}^{+}-c_{N}=2D_{N}^{-}+c_{N}\]
which follows from the definition of $c_{N}$. Any geodesic connecting $p$ to $x_{2}$ must pass through either all the heights in $\gamma_{u}$ or all the heights in $\gamma_{d}$. Hence $\gamma_{u}$ and $\gamma_{d}$ are both geodesics. Since $\gamma_{d}$ ends downwards and $\gamma_{u}$ ends upwards, it follows that $d_{p}$ is non differentiable at $x_{2}$.

For connecting $p$ to $x_4$ consider the downward (respectively upward) ending paths obtained as follows:
\begin{enumerate}
	\item From $p$ we go up to height $p_1+D_N^+$ (respectively down to $p_1-D_M^-$);
	\item we jump with the wormhole of level $N$ (respectively $M$) found there;
	\item we go down to height $p_1-D_M^-$ (respectively up to  $p_1+D_N^+$);
	\item we jump with the wormhole of level $M$ (respectively $N$) we found there;
	\item we go up (respectively down) to height $p_1$.
\end{enumerate}
Both paths have equal length $2D_{N}^{+}+2D_{M}^{-}$. To see they are both geodesics, it suffices to see $2D_{N}^{+}+2D_{M}^{-}<\min (2D_{M}^{+}, 2D_{N}^{-})$. However this follows from \eqref{Apr28.1} and \eqref{Apr28.2}. Since one path ends upwards and one path ends downward, non differentiability at $x_{4}$ then follows from Lemma \ref{lemma_double_geod}.


The construction of the geodesics from $p$ to $x_6$ is omitted for brevity as it is similar to the construction of the geodesics from $p$ to $x_2$.
\end{proof}

\begin{claim}
The points $x_1,\cdots, x_7$ are the only points in $V_{\Delta}^p$ at which $d_p$ is not differentiable.
\end{claim}

\begin{proof}
If $y=[y_1,p_2']\in V_{\Delta}^p$ with $y_1<h(x_1)$ or $y_1>h(x_7)$, we can use the same argument used in the end of the proof of Claim \ref{claima} to show that $(d_p)_I$ exists at $y$.

The remaining points are of the type $y=[y_1,p_2']$ with $h(x_i)<y_1<h(x_{i+1})$ for $i=1\leq i\leq 6$.

\bigskip

Assume $h(x_{1})<y_{1}<h(x_{2})$. We claim that a geodesic from $p$ to $y$ starts at $p$, goes down to height $p_{1}-D_{N}^{-}$, then goes up to height $y_{1}$ with jumps where necessary. The length of such a curve is $2D_{N}^{-}+y_{1}-p_{1}$. To see it is a geodesic we must compare with the competitor which starts at $p$, goes up to height $p_{1}+D_{N}^{+}$, goes down to height $p_{1}-D_{M}^{-}$, then down to height $y_{1}$ with jumps in between. The length of such a curve is $2D_{N}^{+}+p_{1}-y_{1}$. Hence we need 
\[2D_{N}^{-}+y_{1}-p_{1} < 2D_{N}^{+}+p_{1}-y_{1}\]
which rearranges to
\[y_{1}<D_{N}^{+}-D_{N}^{-}+p_{1}.\]
This is guaranteed by the assumption $y_{1}<h(x_{2})$. Hence $d_{p}(y)=2D_{N}^{-}+y_{1}-p_{1}$ for all $h(x_{1})<y_{1}<h(x_{2})$. It follows that $(d_{p})_{I}(y)=1$ in this case.

\bigskip

Assume $h(x_{2})<y_{1}<h(x_{3})$. In this case a geodesic starts at $p$, goes up to height $p_{1}+D_{N}^{+}$, goes down to height $p_{1}-D_{M}^{-}$, then goes down to height $y_{1}$ with necessary jumps in between. The natural competitor is the geodesic in the previous case and the argument is the same with inequalities reversed. It follows that $(d_{p})_{I}(y)=-1$ in this case.

\bigskip

Assume $h(x_{3})<y_{1}<h(x_{4})$. We claim the natural geodesic starts at $p$, goes up to height $p_{1}+D_{N}^{+}$, goes down to height $p_{1}-D_{M}^{-}$, then up to height $y_{1}$ with jumps in between. The length of such a geodesic is $2D_{N}^{+}+2D_{M}^{-}+y_{1}-p_{1}$ so this would imply $(d_{p})_{I}(y)=1$. That the proposed curve is already shorter than the one which starts at $p$ and goes down to height $p_{1}-D_{N}^{-}$ is contained in the previous cases. We compare it with the curve which starts at $p$, goes up to height $p_{1}+D_{M}^{+}$, then goes down to height $y_{1}$ with necessary jumps in between. The length of such a curve is $2D_{M}^{+}+p_{1}-y_{1}$. To see that the claimed geodesic is shorter we need
\[2D_{N}^{+}+2D_{M}^{-}+y_{1}-p_{1} < 2D_{M}^{+}+p_{1}-y_{1}\]
which rearranges to
\[D_{N}^{+}+D_{M}^{-}-D_{M}^{+}<p_{1}-y_{1}.\]
However we already know $D_{N}^{+}<D_{M}^{+}-D_{M}^{-}$ so the required inequality is true as long as $y_{1}<p_{1}$ which is guaranteed by the assumption $y_{1}<h(x_{4})$ in this case. A similar argument applies in comparison to the curve which starts at $p$, goes down to height $p_{1}-D_{M}^{-}$, goes up to height $p_{1}+D_{N}^{+}$, then goes down to height $y_{1}$.

\bigskip

Assume $h(x_{4})<y_{1}<h(x_{5})$. We claim the geodesic in this case starts at $p$, goes down to height $p_{1}-D_{M}^{-}$, goes up to height $p_{1}+D_{N}^{+}$, then down to height $y_{1}$. The length of such a curve is $2D_{M}^{-}+2D_{N}^{+}+p_{1}-y_{1}$. This can be compared to the curve which starts at $p$, goes up to height $p_{1}+D_{N}^{+}$, goes down to height $p_{1}-D_{M}^{-}$, then goes up to height $y_{1}$. This has length $2D_{N}^{+}+2D_{M}^{-}+y_{1}-p_{1}$ which is clearly longer. We can also compare to the curve which starts at $p$, goes up to height $p_{1}+D_{M}^{+}$, then goes down to height $y_{1}$. This has length $2D_{M}^{+}+p_{1}-y_{1}$. To see this is shorter we need
\[2D_{M}^{-}+2D_{N}^{+}+p_{1}-y_{1} < 2D_{M}^{+}+p_{1}-y_{1}.\]
Rearranging gives $D_{M}^{-}+D_{N}^{+}<D_{M}^{+}$ which was also established earlier. Hence $(d_{p})_{I}(y)=-1$ in this case.

\bigskip

Assume $h(x_{5})<y_{1}<h(x_{6})$. We claim that the geodesic in this case starts at $p$, goes down to height $p_{1}-D_{M}^{-}$, then goes up to height $y_{1}$ with jumps in between. The length of such a curve is $2D_{M}^{-}+y_{1}-p_{1}$. This should be compared against the curve which starts at $p$, goes up to height $p_{1}+D_{M}^{+}$, then down to height $y_{1}$ with jumps in between. Such a curve has height $2D_{M}^{+}+p_{1}-y_{1}$. It suffices to see
\[2D_{M}^{-}+y_{1}-p_{1} < 2D_{M}^{+}+p_{1}-y_{1}\]
which rearranges to
\[y_{1}<p_{1}+D_{M}^{+}-D_{M}^{-}.\]
This in turn is guaranteed by the assumption $y_{1}<h(x_{6})$. Hence $d_{p}(y)=2D_{M}^{-}+y_{1}-p_{1}$ and so $(d_{p})_{I}(y)=1$ in this case.

\bigskip

Assume $h(x_{6})<y_{1}<h(x_{7})$. In this case the geodesic starts at $p$, goes up to height $p_{1}+D_{M}^{+}$, then down to height $y_{1}$ with jumps in between. Such a curve has height $2D_{M}^{+}+p_{1}-y_{1}$. That this is the shortest curve follows by a similar argument to the previous case with signs reversed. Hence $(d_{p})_{I}(y)=-1$ in this case.

\end{proof}

This concludes the proof of Case (b).

\smallskip

\textbf{Case (c):} $-D_M^-<-D_N^-<D_M^+<D_N^+$.

 This case works the same as in Case $(b)$ with upwards and downwards directions reversed and we find the same number of points. 
 

\smallskip

\textbf{Suppose one or more of $D_N^{\pm}, D_M^{\pm}$ do not exist.} Suppose $p_{1}\leq 1/2$, the case $p_{1}>1/2$ is similar with up and down orientations reversed. Then $D_{N}^{+}$ and $D_{M}^{+}$ both exist. The cases are whether $D_{M}^{-}$ does not exist, $D_{N}^{-}$ does not exist, or both do not exist. Note that the case when only $D_{M}^{-}$ does not exist cannot occur, because between height $0$ and height $p_{1}-D_{N}^{-}$ there would exist a wormhole of level $M$ because $0<N<M$.

\smallskip

\textbf{Case (a)}: Suppose neither $D_{M}^{-}$ and $D_{N}^{-}$ exist. Then it is easy to see that there is one point of non-differentiability of $d_{p}$ on the line $V_{\Delta}^{p}$. If $D_{N}^{+}<D_{M}^{+}$ then the point is $[p_{1}+D_{M}^{+},p_{2}+\Delta]$, while if $D_{M}^{+}<D_{N}^{+}$ then the point is $[p_{1}+D_{N}^{+},p_{2}+\Delta]$.

\smallskip

\textbf{Case (b)}: Suppose $D_{M}^{-}$ exists but $D_{N}^{-}$ does not exist. We split into two cases depending on whether $D_{N}^{+}$ or $D_{M}^{+}$ is larger.

Suppose $D_{N}^{+}>D_{M}^{+}$. In this case there is only one point of non-differentiability at the point $[p_{1}+D_{N}^{+},p_{2}+\Delta]$. This is because in order to jump across to the line $V^p_{\Delta}$ from $p$ every geodesic must go up from $p$ to height $p_{1}+D_{N}^{+}$.

Suppose $D_{M}^{+}>D_{N}^{+}$. In this case there are five points of non-differentiability:
\[x_{1}=[p_{1}-D_{M}^{-},p_{2}'], \qquad x_{3}=[p_{1}+D_{N}^{+},p_{2}'], \qquad x_{5}=[p_{1}+D_{M}^{+},p_{2}']\]
\[ x_{2}=[p_{1},p_{2}'], \qquad x_{4}=[p_{1}+D_{M}^{+}-D_{M}^{-},p_{2}'].\]
The points $x_{1}, x_{2}, x_{3}, x_{4}, x_{5}$ are in order of increasing height. The points $x_{1}, x_{3}, x_{5}$ are wormhole levels where geodesics split. The points $x_{2}, x_{4}$ are points where up and down ending geodesics meet. The proof that $x_{1}, x_{2}, x_{3}, x_{4}, x_{5}$ are exactly the points of non-differentiability is analogous to the previous steps in the proof.

This concludes the proof of Proposition \ref{prop_vertical_2}.

\end{proof}

Finally we count the points of non differentiability on lines $V_{\Delta}^p$ where $\Delta$ comes from any sequence $N_1<N_2<\cdots$ of three or more wormhole levels. However, the following lemma tells us that these points have already been accounted for.

\begin{lemma}\label{lemma_parallel_values}
		Let $p\in F$ and fix integers $0<N_1<N_2<\cdots$, each different from $M$ if $p$ is a wormhole of level $M$. 
		
		Let $p=[p_{1},p_{2}]$, with the understanding there are two possible choices of $p_{2}$ if $p$ is a wormhole and the following holds for both choices. Let $\Delta'=\pm\frac{2}{3^{N_1}}\pm\frac{2}{3^{N_2}}$ and $\Delta = \sum_i \pm\frac{2}{3^{N_i}}$, where the signs are uniquely determined by $p$. Then for each $x=[t, p_{2}+\Delta] \in V_{\Delta}^p$, the point $\overline{x}=[t,p_{2}+\Delta'] \in V_{\Delta'}^p$ satisfies $d_{p}(\overline{x})=d_{p}(x)$.
		
Consequently, for each $x=[t, p_{2}+\Delta] \in V_{\Delta}^p$ the distance $d_{p}$ is differentiable at $x$ if and only if it is differentiable at $\overline{x}=[t,p_{2}+\Delta'] \in V_{\Delta'}^p$.
\end{lemma}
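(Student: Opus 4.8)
\emph{The plan is to reduce everything to the explicit distance formula.} By Propositions~\ref{prop_geodesic} and~\ref{prop_distance}, for $q=[s,c]\in F$ we have $d_p(q)=2(\beta-\alpha)-|s-p_1|$, where $[\alpha,\beta]\subseteq[0,1]$ is the shortest interval containing $p_1$, $s$, and a point of $J_n$ for every jump level $n$ needed to connect $p_2$ to $c$. I would first note that, since the identification at level $n$ toggles the $n$-th $K$-address digit at \emph{any} height of $J_n$ and the sets $J_n$ are pairwise disjoint, these needed levels are exactly the positions where the $K$-addresses of $p_2$ and $c$ disagree: for $x=[t,p_2+\Delta]$ this is $\{N_i\}_{i\ge1}$, while for $\overline x=[t,p_2+\Delta']$ it is $\{N_1,N_2\}$.

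\emph{For the equality $d_p(\overline x)=d_p(x)$}, let $[\alpha,\beta]$ and $[\alpha',\beta']$ be the minimal intervals attached to $x$ and $\overline x$. One inclusion, $[\alpha',\beta']\subseteq[\alpha,\beta]$, is clear. For the reverse I would argue that $[\alpha',\beta']$ contains a point of $J_{N_1}$ and a point of $J_{N_2}$; these are distinct (disjointness of the $J_n$) and both integer multiples of $3^{-N_2}$, so $\beta'-\alpha'\ge 3^{-N_2}$. Since $N_i\ge N_2+1$ for $i\ge3$, we get $\beta'-\alpha'\ge 3^{-N_2}\ge 2\cdot3^{-N_i}$, and by Lemma~\ref{analysis}(1) an interval of that length meets $J_{N_i}$; hence $[\alpha',\beta']$ satisfies every constraint defining $[\alpha,\beta]$, so $[\alpha,\beta]=[\alpha',\beta']$ and the distance formula yields $d_p(x)=d_p(\overline x)$. (Here $x\ne p$ because $\Delta$ is a sum of at least three distinct terms $\pm2/3^{N_i}$, hence neither $0$ nor any $\pm2/3^{n}$.)

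\emph{For the differentiability equivalence}, since $t$ was arbitrary the two maps $d_p[\,\cdot\,,p_2+\Delta]$ and $d_p[\,\cdot\,,p_2+\Delta']$ are a single function $G$ of the height. I would then invoke Proposition~\ref{teo_derivimpliesdiff}: $d_p$ is differentiable at $x$ (resp.\ $\overline x$) iff $(d_p)_I(x)$ (resp.\ $(d_p)_I(\overline x)$) exists, a condition involving only the restriction of $d_p$ to the vertical line(s) through the point. Being a wormhole depends only on the height, so $x$ and $\overline x$ are simultaneously wormholes of the same order $m$ or simultaneously not. If neither is, the difference quotients for $(d_p)_I(x)$ and $(d_p)_I(\overline x)$ are literally those of $G$ at $t$, so the equivalence is immediate. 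If both are wormholes of order $m$, the second representative of $x$ lies on the line $V^p_{\widehat\Delta}$, where $\widehat\Delta$ is $\Delta$ with jump level $m$ toggled (a term $\pm2/3^m$ added if $m\notin\{N_i\}$, deleted if $m\in\{N_i\}$), and likewise for $\overline x$ and $\widehat{\Delta'}$; then $(d_p)_I(x)$ exists iff $G$ and $u\mapsto d_p[u,p_2+\widehat\Delta]$ are both differentiable at $t$ with the same derivative, and similarly for $\overline x$. I would finish by a short case analysis on the position of $m$ relative to $N_1,N_2$: applying the equality just proved to the sequences underlying $\widehat\Delta$ and $\widehat{\Delta'}$ shows the two "second functions" agree near $t$ except when $m\in\{N_1,N_2\}$, and in those remaining cases I would use once more that a $J_{N_1}$-height and a $J_{N_2}$-height lie at distance $\ge 3^{-N_2}$ — which forces the relevant minimal intervals to be long, hence locally constant near $t$ — to conclude that either the two second functions differ near $t$ by an additive constant (so share their derivative there), or $G$ itself fails to be differentiable at $t$ (so $d_p$ is differentiable at neither point). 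Either way $d_p$ is differentiable at $x$ iff at $\overline x$.

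\emph{The hard part} will be exactly this wormhole case of the equivalence: tracking the second representative and excluding a genuine mismatch of differentiability after toggling a jump level. The engine in every step is the same observation — once an interval meets both $J_{N_1}$ and $J_{N_2}$ it is automatically long enough to meet every $J_n$ with $n>N_2$ — which makes all higher jump constraints free and lets the comparisons close.
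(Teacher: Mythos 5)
Your proof of the distance equality is correct and rests on the same observation as the paper's: once an interval contains heights from both $J_{N_1}$ and $J_{N_2}$, it has length at least $3^{-N_2}$ (two distinct multiples of $3^{-N_2}$) and therefore automatically meets $J_{N_i}$ for every $i\geq 3$, so the higher jump levels impose no extra constraint. The paper phrases this as geodesic surgery (delete the extra jumps in one direction, insert them in the other), while you phrase it through Proposition \ref{prop_distance} and minimal height intervals; these are the same argument in different clothing, and your version has the minor advantage of making the inclusion/minimality bookkeeping explicit. Where you genuinely diverge is the differentiability consequence: the paper dismisses it as ``immediate from the definition,'' whereas you correctly observe that, via Proposition \ref{teo_derivimpliesdiff}, the issue reduces to directional derivatives and that at a wormhole of order $m$ the condition $f_L=f_R$ drags in a \emph{second} vertical line, namely $V^p_{\widehat\Delta}$ with level $m$ toggled --- a point the paper glosses over. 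Your handling of $m\notin\{N_1,N_2\}$ (the two-level truncations of $\widehat\Delta$ and $\widehat{\Delta'}$ coincide) is right. In the remaining case $m\in\{N_1,N_2\}$ your ``differ by an additive constant or $G$ is not differentiable'' dichotomy is weaker than what your own engine delivers: since $t\in J_m$ lies at distance at least $3^{-N_2}$ from the other level of the pair, the minimal interval for the \emph{single} remaining low level already has length close to $3^{-N_2}>2\cdot 3^{-N_3}$ for $u$ near $t$, hence absorbs $J_{N_3}, J_{N_4},\dots$ for free; so the two ``second functions'' actually coincide in a neighbourhood of $t$ and no dichotomy is needed. With that sharpening your argument closes cleanly, and it is in fact more complete than the printed proof.
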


\begin{proof}
	Let $p$ and the integers $N_1,N_2,\dots$ be as in the hypotheses. Fix any point $x=[t, p_{2}+\Delta] \in V_{\Delta}^p$ with corresponding $\overline{x}=[t,p_{2}+\Delta'] \in V_{\Delta'}^p$. 
	
	Suppose $\gamma$ is a geodesic from $p$ to $x$. Then we can form a curve $\overline{\gamma}$ from $p$ to $\overline{x}$ simply by modifying $\gamma$ so as not to jump through any wormhole level other than $N_{1}$ and $N_{2}$. This does not increase the length of $\gamma$, hence $d_{p}(\overline{x})\leq d_{p}(x)$.
	
	Conversely, suppose $\overline{\gamma}$ is a geodesic from $p$ to $\overline{x}$. Then $\overline{\gamma}$ must pass through wormhole levels of order $N_{1}$ and order $N_{2}$. However between any two wormhole levels of order $N_{1}$ and $N_{2}$, we can find all wormhole levels of order $N>N_{2}$. Hence we can modify $\overline{\gamma}$ without increasing its length by adding some extra jumps to construct a curve $\gamma$ from $p$ to $x$. Hence $d_{p}(x)\leq d_{p}(\overline{x})$ and so $d_{p}(\overline{x})=d_{p}(x)$.
	
The conclusion follows immediately from the definition of differentiability.
	
\end{proof}

Using Proposition \ref{prop_vertical_1}, Proposition \ref{prop_vertical_2} and Lemma \ref{lemma_parallel_values}, we can finally prove the main result of this section.

\begin{theorem}[Restatement of Theorem \ref{distancetheorem}]
Let $p\in F$ and denote by $B_p\subset F$ the set of points in $F$ at which $d_p$ is not differentiable. Then $h(B_{p})$ is countable, in particular $B_p$ is $\sigma$-porous.	
\end{theorem}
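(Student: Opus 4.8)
The plan is to reduce the proof to the enumeration results already established on vertical lines, and then to argue that the totality of points of non-differentiability of $d_{p}$ in $F$ is accounted for by countably many such lines. First I would observe that every point $x \in F \setminus \{p\}$ lies on some line $V_{\Delta}^{p}$: indeed, $x$ can be joined to $p$ by a geodesic which consists of line segments in the $I$ direction together with at most countably many jumps, and the set of jump levels used determines an increasing sequence $N_{1} < N_{2} < \cdots$ (finite or infinite) of positive integers, none of which coincides with the level of $p$ if $p$ is a wormhole. Then $x \in V_{\Delta}^{p}$ where $\Delta = \sum_{i} \pm 2/3^{N_{i}}$ with signs determined by $p$. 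Hence $B_{p} \subset \{p\} \cup \bigcup_{\Delta} (V_{\Delta}^{p} \cap B_{p})$, where the union is over the countably many admissible values of $\Delta$ (each corresponding to a subset, finite or cofinite-in-a-tail, of $\bbN$; in any case a countable family).

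Next I would bound $h(B_{p})$ on each vertical line. By Proposition \ref{vp0}, $V_{0}^{p} \cap B_{p} = \{p\}$, contributing the single height $h(p)$. By Proposition \ref{prop_vertical_1}, for each admissible single level $N$ the set $V_{N}^{p} \cap B_{p}$ is finite. By Proposition \ref{prop_vertical_2}, for each admissible pair $0 < N < M$ the set $V_{\Delta'}^{p} \cap B_{p}$ with $\Delta' = \pm 2/3^{N} \pm 2/3^{M}$ is finite. Finally, for a sequence $N_{1} < N_{2} < \cdots$ of three or more levels, Lemma \ref{lemma_parallel_values} says that $d_{p}$ is differentiable at $x = [t, p_{2}+\Delta] \in V_{\Delta}^{p}$ if and only if it is differentiable at $\overline{x} = [t, p_{2}+\Delta'] \in V_{\Delta'}^{p}$, where $\Delta'$ keeps only the first two levels; moreover $h(\overline{x}) = t = h(x)$. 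Thus $h(V_{\Delta}^{p} \cap B_{p}) = h(V_{\Delta'}^{p} \cap B_{p})$, which is finite by the previous step. In every case $h(V_{\Delta}^{p} \cap B_{p})$ is finite, and there are only countably many admissible $\Delta$, so $h(B_{p}) = \bigcup_{\Delta} h(V_{\Delta}^{p} \cap B_{p}) \cup \{h(p)\}$ is a countable subset of $I$.

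It then remains to deduce $\sigma$-porosity of $B_{p}$ from countability of $h(B_{p})$. A countable set in $I$ is a countable union of singletons, each of which is trivially porous in $I$ (a point has arbitrarily large relative holes around it on every small scale); hence $h(B_{p})$ is $\sigma$-porous in $I$. By Lemma \ref{preimage}, $h^{-1}(h(B_{p}))$ is $\sigma$-porous in $F$, and since $B_{p} \subset h^{-1}(h(B_{p}))$ and a subset of a $\sigma$-porous set is $\sigma$-porous, $B_{p}$ is $\sigma$-porous, completing the proof.

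The main obstacle I anticipate is not any of the individual steps above, each of which is short given the propositions already proved, but rather the bookkeeping needed to be sure the decomposition $B_{p} \subset \{p\} \cup \bigcup_{\Delta}(V_{\Delta}^{p}\cap B_{p})$ really is exhaustive: one must check that \emph{every} point of $F \setminus \{p\}$, including wormholes and points whose connecting geodesics use infinitely many jumps, lies on some $V_{\Delta}^{p}$ with the sign pattern forced by $p$, and that when $p$ itself is a wormhole the two choices of representative $p_{2}$ together still yield only a countable family of lines (this is exactly the scenario Lemma \ref{lemma_parallel_values} is phrased to handle). Once the admissible index set is seen to be countable, the rest is immediate.
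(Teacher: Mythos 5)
Your strategy is the same as the paper's (decompose $B_p$ by the number of jumps needed to reach the line through the point, apply Propositions \ref{vp0}, \ref{prop_vertical_1}, \ref{prop_vertical_2} and Lemma \ref{lemma_parallel_values}, then pass to $\sigma$-porosity via Lemma \ref{preimage}), but one step as written is false: the assertion that there are only countably many admissible $\Delta$. An admissible $\Delta$ corresponds to an arbitrary strictly increasing sequence $N_1<N_2<\cdots$ of levels, finite \emph{or infinite}, with signs forced by $p$; the infinite sequences alone form an uncountable family. Indeed every vertical line of $F$ is $V_{\Delta}^{p}$ for some admissible $\Delta$ (any point can be joined to $p$), and there are uncountably many vertical lines, essentially one for each point of $K$, since a generic $x_2\in K$ differs from $p_2$ at infinitely many levels. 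So the final assembly ``finitely many bad heights per line, countably many lines, hence $h(B_p)$ countable'' does not go through, and the point you single out in your last paragraph as the crux --- checking that the admissible index set is countable --- cannot be checked, because it is not true. (Note also that the theorem only claims $h(B_p)$ is countable; $B_p$ itself, and the family of lines carrying it, need not be.)

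The repair is already contained in the lemma you invoke, and it is exactly how the paper closes the argument. Lemma \ref{lemma_parallel_values} gives $h(V_{\Delta}^{p}\cap B_{p})=h(V_{\Delta'}^{p}\cap B_{p})$, where $\Delta'$ retains only the first two levels $N_1<N_2$. Since $\Delta'$ ranges over the genuinely countable family of pairs, the union of the height sets over all (uncountably many) lines needing three or more jumps is contained in the countable union over pairs, each contributing finitely many heights by Proposition \ref{prop_vertical_2}. In the paper's notation this is the containment $h(S_3)\subseteq h(S_2)$, after which $h(B_p)=h(p)\cup h(S_1)\cup h(S_2)$ is countable; no countability of the index set is ever used. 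With that one substitution, the rest of your argument --- the exhaustiveness of the decomposition, the finiteness results on zero-, one- and two-jump lines, and the porosity step (singletons are porous in $I$, Lemma \ref{preimage}, and subsets of $\sigma$-porous sets are $\sigma$-porous) --- coincides with the paper's proof.
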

	
\begin{proof}
	Fix $p\in F$. As stated in Proposition \ref{vp0}, $p$ is the only point in $V_{0}^{p}$ at which $d_{p}$ is not differentiable. Now take any integer $N>0$, different from $M$ if $p$ is a wormhole of level $M$, and consider the vertical line $V_N^p$. By Proposition~\ref{prop_vertical_1}, there are only a finite number of points of non-differentiability for $d_p$ in $V_N^p$. Hence 
	\[
	S_1:=\{ x\in F: d_p(x)\mbox{ does not exist and } x\in V_N^p \text{ for some integer } N>0 \}
	\]
	is countable.
	
	Similarly, by Proposition \ref{prop_vertical_2}, the set
	\[
	S_2:=\{ x\in F:  d_p(x) \mbox{ does not exist and }x\in V_{\Delta_{N_{1},N_{2}}}^p \text{ for some integer } N_1,N_2>0 \},
	\]
	where $\Delta_{N_{1},N_{2}}=\pm\frac{2}{3^{N_1}}\pm\frac{2}{3^{N_2}}$ with signs depending on $p$, is countable.
	
	We are left to count the points of non-differentiability in vertical lines of the form $V_{\Delta_{(N_{i})}}^p$ 	where $\Delta_{(N_{i})} = \sum_i \pm\frac{2}{3^{N_i}}$ with signs depending on $p$ comes from any sequence $N_1<N_2<\ldots$ of three or more wormhole levels. In other words, the set
	\[
	S_3:=\{ x\in F: d_p(x) \text{ does not exist and }  x\in V_{\Delta_{(N_{i})}}^p \text{ for a sequence } 0<N_1<N_2<\dots \}.
	\]
	By Lemma \ref{lemma_parallel_values}, we have $h(S_3)\subseteq h(S_2)$.

	The points of non-differentiability of $d_p(\cdot)$ can be decomposed as $$B_p = p \cup S_1 \cup S_2 \cup S_3.$$ Since $h(S_3)\subseteq h(S_2)$ it follows $h(B_p)=h(p)\cup h(S_1)\cup h(S_2)$. Since both $S_1$ and $S_2$ are countable we conclude that $h(B_p)$ is countable.
	
	Finally, since $h(B_{p})$ is countable and singletons sets are porous, it follows that $h(B_{p})$ is $\sigma$-porous. Hence $B_{p}\subset h^{-1}(h(B_{p}))$
	is contained inside the preimage under $h$ of a $\sigma$-porous set, hence is $\sigma$-porous.

\end{proof}

	\section{Direct Proof of Rademacher's Theorem in Laakso Space}\label{RademacherProof}
	
	While it is well known that the Laakso space is a PI space, hence supports a differentiable structure, we were unable to find explicit justification in the literature that the differentiable structure used in this paper is the correct one. In this final section we provide this.
	
	\begin{theorem*}[Restatement of Theorem \ref{Rademacher}]
		Every Lipschitz function $f\colon F\to \bbR$ is differentiable almost everywhere.
	\end{theorem*}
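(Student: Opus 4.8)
The plan is to prove the equivalent statement: for a fixed Lipschitz $f\colon F\to\bbR$ with $L:=\mathrm{Lip}(f)$, the set of points at which $f$ fails to be differentiable with respect to $h$ is $\mathcal H^Q$-null. First I would fix coordinates and dispose of wormholes. The set of all wormholes is a countable union of sets $q(\{w\}\times K)$ with $w\in J_n$ a single height, each contained in $h^{-1}(\{w\})$ and hence $\mathcal H^Q$-null; so it suffices to prove differentiability at $\mathcal H^Q$-a.e.\ non-wormhole point. Off the wormholes $q$ is injective and $q_*(\mathcal L^1\otimes\nu)$ is locally comparable to $\mathcal H^Q$, where $\nu$ is the natural self-similar measure on $K$; in particular a Fubini theorem is available in which the vertical slices are the lines $V_0^{x}$.

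Second, I would establish that $f_I$ exists $\mathcal H^Q$-a.e. For each fixed $x_2\in K$ the map $t\mapsto f[t,x_2]$ is $L$-Lipschitz on $[0,1]$, hence differentiable for $\mathcal L^1$-a.e.\ $t$ by the one-dimensional Rademacher theorem; write $G(t,x_2)$ for this derivative. Then $G$ is a bounded Borel function defined $(\mathcal L^1\otimes\nu)$-a.e., and Fubini gives that $f_I(x)=G(h(x),x_2)$ exists for $\mathcal H^Q$-a.e.\ $x$. This is not enough on its own: by Lemma~\ref{lemma_deriv_implies_diff}, existence of $f_I(x)$ does not force differentiability at $x$, so the real work is upgrading.

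Third, the upgrade via an integral representation along geodesics. Given a non-wormhole $x=[x_1,x_2]$ and $y=[y_1,y_2]$ nearby, choose a geodesic from $x$ to $y$ as in Proposition~\ref{prop_geodesic}: it descends along the fiber $x_2$ to some height $a$, ascends to some height $b$ changing fibers only at wormhole heights, then descends along the fiber $y_2$ to $y_1$, where $[a,b]$ is a minimal height interval, so $b-a\le d(x,y)$ and every height met lies within $d(x,y)$ of $x_1$. Since $f$ is absolutely continuous along this geodesic, integrating and subtracting $f_I(x)\bigl(h(y)-h(x)\bigr)$ writes $f(y)-f(x)-f_I(x)\bigl(h(y)-h(x)\bigr)$ as a sum of integrals of $G(t,\cdot)-f_I(x)$ over subintervals of $[a,b]$ — taken along the fiber $x_2$, along the fiber $y_2$, and along the intermediate fibers visited on the ascending leg. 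The contributions along the single fiber $x_2$ are $o(d(x,y))$ whenever $x_1$ is a Lebesgue point of $G(\cdot,x_2)$ with value $f_I(x)$, i.e.\ for $\mathcal H^Q$-a.e.\ $x$. So the crux is controlling the \emph{cross-fiber} contributions $\int|G(t,z)-G(t,x_2)|\,dt$ over short intervals near $x_1$, where $z$ runs over the finitely many fibers reachable from $x_2$ through wormholes lying within $d(x,y)$ of $x_1$. Here a first useful fact is that the orders of these wormholes tend to $\infty$ as $y\to x$: if a geodesic from $x$ uses an order-$n$ wormhole then $[a,b]$ contains $x_1$ and that wormhole, so $d(x,y)\ge b-a\ge\mathrm{dist}(x_1,J_n)>0$; hence for $y$ close enough the configuration is confined to scale $3^{-N}$ with $N=N(x,y)\to\infty$ and $z\to x_2$ in $K$.

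The genuine obstacle I expect is the cross-fiber estimate in the \emph{eccentric} regime, where the height window $d(x,y)$ is much smaller than the fiber separation $2\cdot 3^{-N}$ — this really does occur, since $\min(D_N^+(x_1),D_N^-(x_1))$ can be far smaller than $3^{-N}$, which is exactly the failure of $x_1\in S$ (Definition~\ref{defS}), and $S$ is null. The plan here is a differentiation argument for the bounded Borel function $G$ on the doubling product space $([0,1],\mathcal L^1)\otimes(K,\nu)$: at $(\mathcal L^1\otimes\nu)$-a.e.\ point $G$ has a Lebesgue point with respect to product rectangles, and this is combined with two further inputs — the wormhole identifications, which force $G(w,\cdot)$ to agree across the two fibers glued at a wormhole of height $w$ (this is precisely the condition $f_L=f_R$, valid off a null set of wormholes), and a maximal-function/Borel–Cantelli estimate along the triadic filtration of $K$ — to show the cross-fiber errors are $o(d(x,y))$ for $\mathcal H^Q$-a.e.\ $x$. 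Assembling the three kinds of contributions then gives $|f(y)-f(x)-f_I(x)(h(y)-h(x))|=o(d(x,y))$ as $y\to x$, i.e.\ differentiability $\mathcal H^Q$-a.e. I would also record the softer route for comparison: $F$ is a PI space, so it carries a Cheeger differentiable structure; since $\mathrm{Lip}\,h(x)=1$ at every non-wormhole $x$ (take $y$ directly above $x$ on its fiber, so $d(y,x)=|h(y)-h(x)|$), the function $h$ has non-degenerate derivative with respect to any Cheeger chart, whence — using that the analytic dimension of Laakso space is $1$ together with the change-of-chart principle — $(F,h)$ is itself a valid chart and every Lipschitz function is differentiable with respect to it a.e.; but I would keep the direct argument above in the paper for self-containedness.
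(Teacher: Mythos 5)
Your overall architecture matches the paper's: reduce to a.e.\ existence of $f_{I}$ via one-dimensional Rademacher and a Fubini theorem for $q_{*}(\mathcal{L}^{1}\otimes\nu)\simeq\mathcal{H}^{Q}$, then upgrade to full differentiability by writing $f(y)-f(x)-f_{I}(x)(h(y)-h(x))$ as a sum of integrals of $f_{I}$ along the countably many vertical segments of a geodesic, and you correctly isolate the crux: the cross-fiber contributions in the eccentric regime where $d(x,y)\approx\min(D_{N}^{+}(x_{1}),D_{N}^{-}(x_{1}))\ll 3^{-N}$, i.e.\ exactly where $x_{1}\notin S$. However, your proposed resolution of that crux has a genuine gap. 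The tools you name produce statements that hold for $\nu$-a.e.\ fiber $z\in K$ (Lebesgue points for product rectangles, a maximal-function/Borel--Cantelli estimate along the triadic filtration), but the geodesic visits \emph{specific} fibers $z=x_{2}\pm 2/3^{N_{1}}\pm 2/3^{N_{2}}\pm\cdots$ determined combinatorially by $x_{2}$ and the wormhole structure; these form a countable, hence $\nu$-null, family, so ``for a.e.\ $z$'' gives no information about them. Your other input, the agreement of the two glued fibers at wormhole heights, controls only the sup norm: $f[\cdot,x_{2}]$ and $f[\cdot,z]$ agree on the $2/3^{N}$-dense set $J_{N}$, which yields $\|f[\cdot,z]-f[\cdot,x_{2}]\|_{\infty}\lesssim L/3^{N}$ and hence a bound on the \emph{signed} integral $\int_{I'}(G(t,z)-G(t,x_{2}))\,dt$ of order $L/3^{N}$; but $L/3^{N}$ is not $o(d(x,y))$ precisely in the eccentric regime, and the unsigned integral $\int_{I'}|G(t,z)-G(t,x_{2})|\,dt$ is not controlled at all by this. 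So the step ``the cross-fiber errors are $o(d(x,y))$ for $\mathcal{H}^{Q}$-a.e.\ $x$'' is asserted, not proved, and the named ingredients do not assemble into a proof of it.

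The paper's mechanism for this step is different and is the idea you are missing: do \emph{not} estimate along the fibers the geodesic actually uses. Instead, for each vertical segment $[p_{i+1},p_{i}]$ of the geodesic, use a density-point argument in $(F,d,\mathcal{H}^{Q})$ (the sets $E_{k}^{2}(\varepsilon)$) to find a nearby point $q_{i+1}\in D_{\varepsilon}(x)\cap E_{k}^{1}(\varepsilon)$ with $d(q_{i+1},p_{i+1})\leq\varepsilon\mu_{i+1}$, accessible from the segment only through high-order wormholes, and integrate $f_{I}$ along the fiber of $q_{i+1}$ over the same height displacement. Membership in $E_{k}^{1}(\varepsilon)$ (a one-dimensional approximate-continuity condition along that single fiber) controls the integral, membership in $D_{\varepsilon}(x)$ ties the value back to $f_{I}(x)$, and the price of the perturbation is only $L\cdot O(\varepsilon\mu_{i+1})$ per segment, which sums to $O(\varepsilon d(x,y))$ by the length estimate $\sum_{i}\mu_{i}\leq 11\,d(x,y)$. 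This sidesteps entirely the need to compare $G(\cdot,z)$ with $G(\cdot,x_{2})$ on the null family of fibers reached by wormholes. (Your ``softer route'' via Cheeger's theorem also has a gap as stated: that the analytic dimension is $1$ and that $h$ may serve as the chart is exactly the assertion the paper says it could not find justified in the literature, which is why Section \ref{RademacherProof} exists.)
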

	
	We divide the proof of Theorem \ref{Rademacher} into several steps.
	
	\subsection{Measure Theoretic Preliminaries}
	
	Let $q\colon I\times K\to F$ be the quotient map defined by $q(t,x)=[t,x]$. Denote by $\mathcal{H}^{1}$ and $\mathcal{H}^{Q-1}$ the Hausdorff measures on $I$ and $K$ respectively with respect to the Euclidean distance. Define the push forward measure $\mu=q_{\ast}(\mathcal{H}^{1}\times \mathcal{H}^{Q-1})$. Before giving properties of $\mu$, we first note the following simple lemma.
	
	\begin{lemma}\label{atmostone}
		Suppose $x,y\in F$ with $d(x,y)<1$. Let $N\geq 1$ be the unique integer satisfying $1/3^{N}\leq d(x,y)< 1/3^{N-1}$. Then any geodesic joining $x$ to $y$ can pass through at most one wormhole of level less than or equal to $N-1$.
	\end{lemma}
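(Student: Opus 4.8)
The plan is to use Proposition \ref{prop_distance} to relate the distance to the minimal height interval, then argue that if a geodesic passed through two wormholes of level at most $N-1$, the minimal height interval would be forced to have length at least $1/3^{N-1}$, contradicting the upper bound on $d(x,y)$.

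First I would recall that by Proposition \ref{prop_geodesic} a geodesic from $x$ to $y$ (say with $h(x) \le h(y)$) travels from height $h(x)$ down to some height $a$, then up to some height $b$, then down to $h(y)$, where $[a,b]$ is a minimal height interval for $x$ and $y$. Every wormhole the geodesic uses has height in $[a,b]$. By Proposition \ref{prop_distance}, $d(x,y) = 2b - 2a - |h(x) - h(y)| \ge b - a$, using $|h(x)-h(y)| \le b-a$; hence $b - a \le d(x,y) < 1/3^{N-1}$.

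Next I would suppose, for contradiction, that the geodesic passes through two distinct wormholes, of levels $k$ and $\ell$ with $k, \ell \le N-1$. Their heights lie in $J_k$ and $J_\ell$ respectively, both inside $[a,b]$. The key arithmetic point is that heights of wormholes of level $\le N-1$ are, in a suitable sense, "spread out" at scale $1/3^{N-1}$: two distinct elements of $\bigcup_{j\le N-1} J_j$ differ by at least... well, this needs a little care, since $J_k$ for small $k$ can be dense-looking. The clean statement I expect to use is: any two distinct points of $\bigcup_{j \le N-1} J_j$ that are both needed on a geodesic connecting points at distance $< 1/3^{N-1}$ force the interval $[a,b]$ to contain a full "block" of length $1/3^{N-1}$ — more precisely, if $t \in J_k$ with $k \le N-1$, then within any interval of length less than $1/3^{N-1}$ containing $t$ there is no other element of $\bigcup_{j\le N-1}J_j$ unless the interval straddles a coarser structure. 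The cleanest route is: the wormhole levels $J_j$ for $j \le N-1$ are precisely the heights $w(m_1,\dots,m_j)$ with $m_j > 0$; all such heights are of the form (integer)$/3^{N-1}$ when $j \le N-1$, so distinct ones differ by at least $1/3^{N-1}$. Therefore if $[a,b]$ contains two of them, $b - a \ge 1/3^{N-1}$, contradicting $b-a < 1/3^{N-1}$.

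The main obstacle will be the second claim in that last step — verifying that every wormhole height of level $j \le N-1$ is an integer multiple of $1/3^{N-1}$, hence distinct ones are separated by at least $1/3^{N-1}$. This follows since $w(m_1,\dots,m_j) = \sum_{i=1}^j m_i/3^i = \big(\sum_{i=1}^j m_i 3^{j-i}\big)/3^j$, and multiplying numerator and denominator by $3^{N-1-j}$ (valid as $j \le N-1$) expresses it with denominator $3^{N-1}$; distinctness of the wormhole heights (guaranteed by the $m_j > 0$ convention ensuring levels do not overlap, together with distinctness within a level) then gives separation $\ge 1/3^{N-1}$. Combining this separation with $b - a < 1/3^{N-1}$ shows $[a,b]$ can contain at most one such height, and since every wormhole used by the geodesic has height in $[a,b]$, the geodesic uses at most one wormhole of level $\le N-1$.
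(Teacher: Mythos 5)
Your proof is correct and rests on exactly the same observation as the paper's: wormhole heights of level at most $N-1$ are distinct multiples of $1/3^{N-1}$, hence separated by at least $1/3^{N-1}$, while the geodesic only spans heights within a range of length $d(x,y)<1/3^{N-1}$. The detour through Propositions \ref{prop_geodesic} and \ref{prop_distance} to bound $b-a$ is harmless extra scaffolding (the paper simply notes the geodesic's length already bounds the height variation), so the argument is essentially the paper's proof with the arithmetic spelled out.
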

	
	\begin{proof}
		Any two wormholes of level at most $N-1$ have heights which differ by at least $1/3^{N-1}$. Since $d(x,y)<1/3^{N-1}$, a geodesic from $x$ to $y$ cannot pass through more than one such wormhole.
	\end{proof}
	
	\begin{lemma}
		The measure $\mu$ is Borel and Ahlfors $Q$-regular with respect to the metric $d$ on $F$.
	\end{lemma}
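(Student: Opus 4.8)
The plan is to verify the two asserted properties of $\mu = q_{\ast}(\mathcal{H}^{1}\times \mathcal{H}^{Q-1})$ separately: that $\mu$ is a Borel measure on $(F,d)$, and that it satisfies the Ahlfors $Q$-regularity bounds $C^{-1}R^{Q}\le \mu(B(x,R))\le CR^{Q}$ for all $x\in F$ and $0<R\le \operatorname{diam} F$, with the range of large $R$ handled trivially by finiteness and positivity of $\mu(F)$. Borel regularity is the easy half: $q$ is continuous (and in fact surjective and finite-to-one) from the compact metric space $I\times K$ with the product metric onto $F$, so pushing forward the Borel measure $\mathcal{H}^{1}\times\mathcal{H}^{Q-1}$ produces a Borel measure; one should remark that $q^{-1}$ of a Borel set is Borel since identifications are made only along the countably many wormhole levels, each a closed set, so this costs nothing.

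For the regularity estimate, first I would reduce to small radii: fix $x=[x_1,x_2]\in F$ and $0<R<1$, and let $N\ge 1$ be the integer with $1/3^{N}\le R< 1/3^{N-1}$, as in Lemma \ref{atmostone}. The key geometric input is that the ball $B(x,R)$ in $(F,d)$ is comparable, via $q$, to a controlled union of product rectangles in $I\times K$. Concretely: by Proposition \ref{prop_distance}, $d(y,x)$ is governed by the length of a minimal height interval, so a point $[y_1,y_2]$ lies in $B(x,R)$ essentially when $|y_1-x_1|<R$ and $y_2$ can be joined to $x_2$ using only wormhole levels available within a height window of size $O(R)$ around $x_1$; by Lemma \ref{atmostone} at most one "coarse" jump (level $\le N-1$) is used, and all finer jumps are free. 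From this one extracts two-sided inclusions: $q\big(B_I(x_1, cR)\times (\text{a cylinder of }K\text{ of the appropriate generation})\big) \subseteq B(x,R) \subseteq q\big(B_I(x_1, R)\times (\text{a bounded union of such cylinders})\big)$. Since $\mathcal{H}^{Q-1}$ of a generation-$m$ cylinder of $K$ is comparable to $(1/3^{m})^{Q-1}=(1/3^{m})^{\ln 2/\ln 3}=2^{-m}$, and the relevant generation is $m\sim N$ so that $2^{-N}\sim R^{Q-1}$, the product of the $I$-length $\sim R$ and the $K$-mass $\sim R^{Q-1}$ gives $\mu(B(x,R))\sim R^{Q}$, with the finite-to-one nature of $q$ (at most two preimages for wormhole points) absorbed into the constant.

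The main obstacle is the careful bookkeeping in the upper bound: one must check that $B(x,R)$, pulled back by $q$, meets only boundedly many generation-$\sim N$ cylinders of $K$ and only within a height window of size $O(R)$ — in other words that the "one coarse jump" permitted by Lemma \ref{atmostone} cannot let the preimage spread over an uncontrolled portion of $K$. This is where Proposition \ref{prop_distance} and the structure of the wormhole levels $J_n$ (adjacent elements separated by at most $2/3^n$) must be used quantitatively, together with the observation that a single jump of level $\le N-1$ moves the second coordinate by at most $2/3 <1$ and lands in a definite sibling cylinder, so the total $\mathcal{H}^{Q-1}$-content reached is still $O(2^{-N})$. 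The lower bound is comparatively routine once the inclusion $q(B_I(x_1,cR)\times C)\subseteq B(x,R)$ for a suitable single cylinder $C$ is established. I would also note, for cleanliness, that it suffices to prove the estimate for $R$ of the form $1/3^{N}$ and then interpolate, since $\mu(B(x,R))$ is monotone in $R$ and consecutive such radii differ by a bounded factor.
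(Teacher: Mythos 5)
Your proposal is correct and follows essentially the same route as the paper: Borelness from continuity of $q$, and the two-sided estimate at scale $R\sim 3^{-N}$ by sandwiching $q^{-1}(B(x,R))$ between product rectangles — an interval of length $\sim R$ times a single Cantor cylinder of generation $\sim N$ for the lower bound, and (via Lemma \ref{atmostone}, one coarse jump landing in a single sibling cylinder) an interval of length $2R$ times two generation-$(N-1)$ cylinders for the upper bound, with $\mathcal{H}^{Q-1}$ of such cylinders $\sim 2^{-N}\sim R^{Q-1}$. This matches the paper's argument, which takes the cylinder of generation $N+2$ with the one-sided interval $[x_1,x_1+R/2]$ below and two generation-$(N-1)$ cylinders above.
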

	
	\begin{proof}
		That $\mu$ is Borel follows from continuity of $q$. Fix $x=[x_{1},x_{2}]\in F$ and $0<r<1/3$. Fix $N$ such that $1/3^{N}\leq r<1/3^{N-1}$.
		
		We first estimate $\mu(B(x,r))$ from below. Without loss of generality we assume $x_{1}<2/3$, since otherwise one can apply a similar argument with up and down reversed. For each $M\geq 1$, wormholes of level $M$ are spaced apart by a distance at most $2/3^{M}$. If $M\geq N+2$ then $r/2\geq 2/3^{M}$. Hence, starting at $x$, one can reach by a curve of length at most $r$ any point $y=[y_{1},y_{2}]$ satisfying both:
		\begin{itemize}
			\item $x_{1}\leq y_{1}\leq x_{1}+r/2$, and
			\item $y_{2}$ is reached from $x_{2}$ by wormholes of level $M\geq N+2$.
		\end{itemize}
		This shows that $q^{-1}(B(x,r))$ contains a set of the form $[x_{1},x_{1}+r/2]\times K_{N+2}$, where $K_{N+2}\subset K$ is a piece of the middle third Cantor set obtained after splitting $N+2$ times. In particular it has diameter $1/3^{N+2}$. Since $\mathcal{H}^{Q-1}$ on $K$ is $(Q-1)$-regular,
		\begin{align*}
			\mu(B(x,r))&=(\mathcal{H}^{1}\times \mathcal{H}^{Q-1})(q^{-1}(B(x,r))\\
			&\geq (r/2)\mathcal{H}^{Q-1}(K_{N+2})\\
			&\geq C^{-1}r (1/3^{N+2})^{Q-1}\\
			&\geq C^{-1}r^{Q}.
		\end{align*}
		In the above estimates, $C\geq 1$ denotes a constant independent of $x$ and $r$.
		
		We now estimate $\mu(B(x,r))$ from above. By Lemma \ref{atmostone}, at vertical distance at most $r$ above and below $x$, one can find at most one wormhole of any level $M\leq N-1$. Hence $q^{-1}(B(x,r))$ is contained in a set of the form
		\[\Big( [x_{1}-r,x_{1}+r]\times K_{N-1}^{1}\Big) \cup \Big([x_{1}-r,x_{1}+r]\times K_{N-1}^{2}\Big),\]
		where $K_{N-1}^{1}, K_{N-1}^{2}$ are pieces of the middle third Cantor set obtained after splitting $N-1$ times, each having diameter $1/3^{N-1}$. This leads to the estimate
		\begin{align*}
			\mu(B(x,r))&=(\mathcal{H}^{1}\times \mathcal{H}^{Q-1})(q^{-1}(B(x,r))\\
			&\leq Cr(1/3^{N-1})^{Q-1}\\
			&\leq Cr^{Q}.
		\end{align*}
	\end{proof}
	
	By \cite[Exercise 8.11]{Hei01}, since $\mu$ and $\mathcal{H}^{Q}$ are both Ahlfors $Q$-regular on $F$ there is constant $C\geq 1$ so that
	\[C^{-1}\mathcal{H}^{Q}(E)\leq \mu(E)\leq C\mathcal{H}^{Q}(E) \qquad \mbox{for all Borel sets }E\subset F.\]
	In particular, to show sets have measure zero we may use either $\mu$ or $\mathcal{H}^{Q}$ on $F$.
	
	\begin{lemma}\label{weakFubini}
		Suppose $A\subset F$ is Borel with respect to the metric $d$ and 
		\[\mathcal{L}^{1}\{t\in I : [t,z]\in A\}=0 \mbox{ for every }z\in K.\]
		Then $\mathcal{H}^{Q}(A)=0$.
	\end{lemma}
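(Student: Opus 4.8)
The plan is to transfer the statement to the product $I\times K$ via the quotient map $q$ and then apply Fubini's theorem. Recall from the preceding discussion that $\mu=q_{\ast}(\mathcal{H}^{1}\times\mathcal{H}^{Q-1})$ is comparable to $\mathcal{H}^{Q}$ on $F$, so it is enough to prove $\mu(A)=0$, that is,
\[(\mathcal{H}^{1}\times\mathcal{H}^{Q-1})\big(q^{-1}(A)\big)=0.\]
Since $q$ is continuous and $A$ is Borel with respect to $d$, the set $q^{-1}(A)$ is a Borel subset of $I\times K$. Moreover $\mathcal{H}^{1}$ is Lebesgue measure on $I=[0,1]$ and $\mathcal{H}^{Q-1}$ restricted to the middle third Cantor set $K$ is a finite Borel measure, so $\mathcal{H}^{1}\times\mathcal{H}^{Q-1}$ is a finite Borel measure on the separable metric space $I\times K$ and Fubini's theorem is available.

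Next I would slice $q^{-1}(A)$ over the $K$-coordinate. For each $z\in K$, the corresponding slice is
\[\{t\in I:(t,z)\in q^{-1}(A)\}=\{t\in I:[t,z]\in A\},\]
which by hypothesis has $\mathcal{L}^{1}=\mathcal{H}^{1}$ measure zero. Fubini's theorem then yields
\[(\mathcal{H}^{1}\times\mathcal{H}^{Q-1})\big(q^{-1}(A)\big)=\int_{K}\mathcal{H}^{1}\big(\{t\in I:[t,z]\in A\}\big)\dd\mathcal{H}^{Q-1}(z)=0.\]
Hence $\mu(A)=0$, and by the comparison between $\mu$ and $\mathcal{H}^{Q}$ we conclude $\mathcal{H}^{Q}(A)=0$.

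There is no serious obstacle here: the argument is an immediate combination of the measure comparison established above with the classical Fubini theorem. The only points deserving a brief remark are the Borel measurability of $q^{-1}(A)$, which follows from continuity of $q$, and the identification of the fibers of $q^{-1}(A)$ over $K$ with the sets appearing in the hypothesis, which is immediate from the definition of $q(t,z)=[t,z]$.
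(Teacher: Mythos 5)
Your argument is correct and follows the same route as the paper: pull $A$ back to $I\times K$ via $q$, apply Tonelli/Fubini to conclude $\mu(A)=(\mathcal{H}^{1}\times\mathcal{H}^{Q-1})(q^{-1}(A))=0$ from the null slices, and then use the comparability of $\mu$ and $\mathcal{H}^{Q}$. The extra remarks on Borel measurability of $q^{-1}(A)$ and the identification of the fibers are fine and simply make explicit what the paper leaves implicit.
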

	
	\begin{proof}
		The hypothesis implies $\mu(A)=0$ by Tonelli's theorem. Since $\mu$ and $\mathcal{H}^{Q}$ are comparable, it follows $\mathcal{H}^{Q}(A)=0$.
	\end{proof}
	
	\begin{lemma}\label{firstmeas}
		The following statements hold for every Lipschitz map $f\colon F\to \mathbb{R}$.
		\begin{enumerate}
			\item For every $z\in K$, the set
			\[D_{z}:=\{t\in I: \mbox{the directional derivative }f_{I}[t,z] \mbox{ exists}\}\]
			is Borel with respect to the Euclidean metric on $I$ and has full $\mathcal{L}^{1}$ measure.
			\item For every $z\in K$, the map from $D_{z}$ to $\bbR$ defined by $t\mapsto f_{I}[t,z]$ is Borel measurable with respect to the Euclidean metric on $I$.
			\item The set
			\[D:=\{x\in F: \mbox{the directional derivative }f_{I}(x) \mbox{ exists}\}\]
			is Borel measurable with respect to $d$ on $F$ and has full $\mathcal{H}^{Q}$ measure.
			\item The map $f_{I}\colon D\to \bbR$ defined by $x\mapsto f_{I}(x)$ is Borel measurable.
		\end{enumerate}	
	\end{lemma}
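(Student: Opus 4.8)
The plan is to reduce all four assertions to classical one–dimensional differentiation theory together with the single soft fact that $\tilde f:=f\circ q\colon I\times K\to\bbR$ is continuous, being a composition of continuous maps. Throughout write $\mathcal J:=\bigcup_{n}J_{n}$, a countable subset of $(0,1)$ (each $J_{n}$ being finite), and $W:=h^{-1}(\mathcal J)\subset F$; this is Borel since $h$ is continuous and $\mathcal J$ is countable, and it is precisely the set of wormholes. For (1) and (2), fix $z\in K$ and set $g_{z}(t):=\tilde f(t,z)$. Since $d([t,z],[s,z])\le|t-s|$, $g_{z}$ is Lipschitz on $I$, hence differentiable $\mathcal L^{1}$-a.e.; moreover $E_{z}:=\{t:g_{z}'(t)\text{ exists}\}$ is Borel — existence of the limit is expressed via $\limsup=\liminf<\infty$ of difference quotients with rational increments, using continuity of $g_{z}$ — and $g_{z}'$ is Borel on $E_{z}$ as a pointwise limit of the continuous maps $t\mapsto k\bigl(g_{z}(t+1/k)-g_{z}(t)\bigr)$. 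For $t\notin\mathcal J$ the point $[t,z]$ is not a wormhole and $f_{I}[t,z]$ exists exactly when $g_{z}'(t)$ does, with the same value, while $D_{z}\cap\mathcal J$ is a subset of the countable set $\mathcal J$ and hence Borel; so $D_{z}=(E_{z}\setminus\mathcal J)\cup(D_{z}\cap\mathcal J)$ is Borel, has full measure because $\mathcal L^{1}(\mathcal J)=0$, and $t\mapsto f_{I}[t,z]$ coincides with the Borel function $g_{z}'$ off the countable set $\mathcal J$, so it is Borel on $D_{z}$.

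For (3) and (4) I would treat $F\setminus W$ and $W$ separately. On $F\setminus W$ each point has a unique representative, and the ``$K$-coordinate'' map $\pi\colon F\setminus W\to K$ is continuous: if $x^{(k)}\to x$ with all points off $W$ but $\pi(x^{(k)})\to z\ne\pi(x)$ along a subsequence, then continuity of $q$ and of $h$ forces $x=[h(x),z]$, making $x$ a wormhole, a contradiction. Hence $\Phi(x,t):=q(h(x)+t,\pi(x))$ is continuous on $\{(x,t)\in(F\setminus W)\times\bbR:h(x)+t\in[0,1]\}$, and $f_{I}(x)=\lim_{t\to0}\bigl(f(\Phi(x,t))-f(x)\bigr)/t$ for $x\in F\setminus W$ (one-sided only at the heights $0$ and $1$, which lie outside $\mathcal J$). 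The standard argument — existence of the limit of a jointly continuous difference quotient, tested over rational $t$ — then shows $D\setminus W$ is Borel and $f_{I}$ is Borel on it.

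The main obstacle is the set $W$, which is \emph{uncountable}, so it cannot be handled pointwise. Here I would write $W=\bigsqcup_{n}W_{n}$ with $W_{n}:=h^{-1}(J_{n})$ a finite disjoint union of wormhole levels $L_{t}:=h^{-1}(\{t\})$, and use that the map $z\mapsto[t,z]$ from the compact set $\hat K_{n}:=\bigcup_{|a|=n-1}K_{a0}$ of ``smaller representatives'' onto $L_{t}$ is a continuous bijection from a compact space onto a Hausdorff space, hence a homeomorphism; consequently $W_{n}$ is homeomorphic to $J_{n}\times\hat K_{n}$ via $(t,z)\mapsto[t,z]$. For $x=[t,z]$ with $z\in\hat K_{n}$, Definition \ref{def_vert_deriv} identifies $f_{L}(x)$ and $f_{R}(x)$ with the partial derivative $\partial_{s}\tilde f$ evaluated at $(t,z)$ and at $(t,z+2/3^{n})$ respectively, so $f_{I}(x)$ exists iff both of these partial derivatives exist and are equal. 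Since $\tilde f$ is continuous, $E:=\{(s,w):\partial_{s}\tilde f(s,w)\text{ exists}\}$ is Borel in $I\times K$ and $g:=\partial_{s}\tilde f$ is Borel on $E$, by the same $\limsup$/$\liminf$ argument applied in the variable $s$. Therefore the subset of $J_{n}\times\hat K_{n}$ corresponding to $D\cap W_{n}$, namely $\{(t,z):(t,z)\in E,\ (t,z+2/3^{n})\in E,\ g(t,z)=g(t,z+2/3^{n})\}$, is Borel, and transporting it along the homeomorphism shows $D\cap W_{n}$ is Borel in $F$, with $f_{I}$ Borel on it. Summing over $n$ completes the Borel part of (3) and all of (4).

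Finally, to see $D$ has full $\mathcal H^{Q}$ measure: one has $\mathcal H^{Q}(W)=0$, since $q^{-1}(W)=\mathcal J\times K$ is $(\mathcal H^{1}\times\mathcal H^{Q-1})$-null (as $\mathcal L^{1}(\mathcal J)=0$) and $\mu$ is comparable with $\mathcal H^{Q}$; and $(F\setminus W)\setminus D$ is Borel with each of its vertical slices $\{t:[t,z]\in(F\setminus W)\setminus D\}\subseteq I\setminus D_{z}$ being $\mathcal L^{1}$-null by (1), so Lemma \ref{weakFubini} gives $\mathcal H^{Q}\bigl((F\setminus W)\setminus D\bigr)=0$; hence $\mathcal H^{Q}(F\setminus D)=0$. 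The two points requiring the most care are the continuity of $\pi$ on $F\setminus W$ and the homeomorphism $\hat K_{n}\cong L_{t}$, which is what allows Borel subsets of the wormhole levels to be pushed forward to Borel subsets of $F$; the remaining steps are routine.
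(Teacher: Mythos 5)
Your proof is correct. For parts (1) and (2) it coincides with the paper's argument: restrict to a vertical section, observe that the section is Lipschitz, invoke the one‑dimensional Rademacher theorem, and note that the wormhole heights form a countable (hence null) exceptional set on each section, with Borel measurability coming from the usual rational‑quantifier description of differentiability of a continuous function. For parts (3) and (4) you take a genuinely different, and in fact more careful, route. The paper characterizes existence of $f_{I}$ at non‑wormhole points by a countable condition over rational increments and then disposes of the remaining points with the assertion that ``the set of wormholes is countable'' --- which is true of the set of wormhole \emph{heights} but not of the set of wormhole \emph{points}, since each wormhole level carries a Cantor set's worth of identified pairs. Your decomposition $F=(F\setminus W)\sqcup\bigsqcup_{n}W_{n}$ addresses exactly this: the continuity of the $K$-coordinate $\pi$ off $W$ is the (unstated) ingredient needed to make the difference quotients Borel in $x$, and the homeomorphism between $J_{n}\times\hat{K}_{n}$ and $W_{n}$ (continuous bijection from a compact space to a Hausdorff space) lets you transport the Borel set where the two one‑sided partial derivatives of $\tilde f=f\circ q$ exist and agree onto $D\cap W_{n}$. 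What your approach buys is a complete treatment of the uncountable wormhole set; what it costs is having to verify the continuity of $\pi$ and of $q$, the latter of which you take as a soft fact, as does the paper. The measure‑theoretic conclusion, splitting off the null set $W$ and applying Lemma \ref{weakFubini} to the slices of the Borel set $(F\setminus W)\setminus D$, is the same in both arguments.
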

	
	\begin{proof}
		We first prove (1) and (2). Fix $z\in K$. The section $I \to \bbR$ given by $t\mapsto f[t,z]$ is Lipschitz with respect to the Euclidean metric. Differentiability of this section is equivalent to existence of the directional derivative for $f$ in $F$, except for points identified at wormhole levels which form a countable set for each section. Hence Rademacher's theorem in $I$ implies $D_{z}$ has full $\mathcal{L}^{1}$ measure. The fact that $D_{z}$ is Borel and the section is Borel measurable with respect to the Euclidean metric on $I$ can be verified with standard elementary arguments, for instance as in \cite{BS13}. This proves (1) and (2).
		
		We now prove (3) and (4). If $p=[p_{1},p_{2}]$ is not a wormhole, it is straightforward to show that $f_{I}[p_{1},p_{2}]$ exists and belongs to a closed interval $J\subset \bbR$ if and only if for all $\varepsilon \in \mathbb{Q}^{+}$ there exists $\delta\in \mathbb{Q}^{+}$ and $q\in J\cap \mathbb{Q}$ such that
		\[ \sup_{\substack{0<|t|<\delta\\ t\in \mathbb{Q}}} \frac{|f[p_{1}+t,p_{2}]-f[p_{1},p_{2}]-qt|}{t}<\varepsilon.\]
		Since $f$ is Borel measurable with respect to $d$ and the set of wormholes is countable, it follows that $D$ is a Borel measurable subset of $F$ and $f_{I}\colon D\to \bbR$ is Borel measurable map. The fact that $D$ has full $\mathcal{H}^{Q}$ measure follows from Lemma \ref{weakFubini}, because $D$ is Borel and every section of $F\setminus D$ has Lebesgue measure zero.
	\end{proof}
	
	Recall that if $g$ is a nonnegative locally integrable function on a doubling metric measure space $(X,\nu)$, then $\lim_{r\to 0} \dashint_{B(x,r)} f\dd \nu = f(x)$ for almost every $x\in X$. 
	Applying this to the characteristic function of a Borel set of locally finite measure gives a Lebesgue density theorem on $(X,\nu)$. It also follows that Borel measurable functions on $X$ are approximately continuous almost everywhere. This means that if $g\colon X\to \bbR$ is Borel measurable, then for almost every $x\in X$
	\[ \lim_{r\to 0} \frac{\nu\{y\in B(x,r): |g(y)-g(x)|>\varepsilon\}}{\nu(B(x,r))}=0 \qquad \mbox{for every }\varepsilon>0. \]
	
	We will use these facts in $\bbR$ equipped with Euclidean distance and Lebesgue measure and in $F$ equipped with the metric $d$ and Hausdorff measure $\mathcal{H}^{Q}$.
	
	\subsection{Auxilliary Sets}
	
	Fix a constant $C_{Q}\geq 1$ such that
	\[C_{Q}^{-1}r^{Q}\leq \mathcal{H}^{Q}(B(x,r))\leq C_{Q}r^{Q} \qquad \mbox{for all }x\in F \mbox{ and }0<r\leq 1.\]
	Let $f\colon F\to \mathbb{R}$ be a Lipschitz function and let $D\subset F$ denote the set of points where the directional derivative of $f$ exists. Denote $L=\mathrm{Lip}(f)$. 
	
	\begin{definition}
		Let $D'$ be the set of all points $x\in D$ which are not a wormhole. We define several sets as follows.
		\begin{enumerate}
			\item For each $\varepsilon>0$ and $x\in D$,
			\[D_{\varepsilon}(x):=\{ y\in D:|f_{I}(y)-f_{I}(x)|\leq \varepsilon\}.\]
			\item For each $\varepsilon>0$ and integer $k\geq 1$, let $E_{k}^{1}(\varepsilon)$ be the collection of all points $x=[x_{1},x_{2}]\in D'$ such that
			\[\mathcal{L}^{1}\{t\in (x_{1}-r,x_{1}+r)\cap I:[t,x_{2}]\notin D_{\varepsilon}(x)\}\leq \varepsilon r\]
			for every $0<r<1/k$.
			\item For each $\varepsilon>0$ and integer $k\geq 1$, let $E_{k}^{2}(\varepsilon)$ be the collection of all points $x\in D'$ for which
			\[\mathcal{H}^{Q}\Big( B(x,r)\setminus ( D_{\varepsilon}(x) \cap E_{k}^{1}(\varepsilon) ) \Big) <\frac{C_{Q}^{-1}\varepsilon^{Q}r^{Q}}{2^{Q+1}}\]
			for every $0<r<1/k$.
		\end{enumerate}
	\end{definition}
	
	\begin{lemma}\label{E1}
		For all $\varepsilon>0$ and integer $k\geq 1$, $E_{k}^{1}(\varepsilon)$ is Borel with respect to $d$ and
		\[\mathcal{H}^{Q}\left( F\setminus \bigcup_{k=1}^{\infty} E_{k}^{1}(\varepsilon) \right) =0.\]	
	\end{lemma}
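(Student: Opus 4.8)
The plan is to establish the two assertions separately: first that each $E_{k}^{1}(\varepsilon)$ is Borel, then that the union $\bigcup_{k} E_{k}^{1}(\varepsilon)$ differs from $F$ by a null set. The Borel measurability should follow from a standard limiting argument. For fixed $x=[x_{1},x_{2}]\in D'$ and $r>0$, the quantity
\[
\mathcal{L}^{1}\{t\in(x_{1}-r,x_{1}+r)\cap I:[t,x_{2}]\notin D_{\varepsilon}(x)\}
\]
depends measurably on $x$, because $D$ is Borel and $f_{I}\colon D\to\bbR$ is Borel measurable by Lemma \ref{firstmeas}. One can replace the condition ``for every $0<r<1/k$'' by ``for every rational $r\in(0,1/k)$'' using upper semicontinuity of the relevant measure in $r$ (or simply by noting the defining inequality is $\leq\varepsilon r$ and both sides are appropriately monotone/continuous from the right in $r$). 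Intersecting countably many Borel conditions indexed by rational $r$ then shows $E_{k}^{1}(\varepsilon)$ is Borel; one also intersects with $D'$, which is Borel since $D$ is Borel and the set of wormholes is countable.

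For the measure statement, the key point is that for each fixed $z\in K$, the section $t\mapsto f_{I}[t,z]$ is Borel measurable on the full-measure set $D_{z}\subset I$ (Lemma \ref{firstmeas}(2)), hence approximately continuous $\mathcal{L}^{1}$-almost everywhere on $I$. At a point $t_{0}$ of approximate continuity of this section with $[t_{0},z]\in D'$, the density of the set $\{t:|f_{I}[t,z]-f_{I}[t_{0},z]|>\varepsilon\}$ at $t_{0}$ is zero, so there is $k$ with
\[
\mathcal{L}^{1}\{t\in(t_{0}-r,t_{0}+r)\cap I:[t,z]\notin D_{\varepsilon}[t_{0},z]\}\leq\varepsilon r
\]
for all $0<r<1/k$; that is, $[t_{0},z]\in E_{k}^{1}(\varepsilon)$. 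Therefore, for each $z\in K$, the section $\{t\in I:[t,z]\notin\bigcup_{k}E_{k}^{1}(\varepsilon)\}$ is contained in the union of the $\mathcal{L}^{1}$-null set of non-approximate-continuity points, the $\mathcal{L}^{1}$-null complement of $D_{z}$, and the countable set of wormhole heights on that section, hence has $\mathcal{L}^{1}$ measure zero. Since $F\setminus\bigcup_{k}E_{k}^{1}(\varepsilon)$ is Borel (complement of a countable union of Borel sets), Lemma \ref{weakFubini} gives $\mathcal{H}^{Q}(F\setminus\bigcup_{k}E_{k}^{1}(\varepsilon))=0$.

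The main obstacle I anticipate is the careful bookkeeping needed to invoke approximate continuity of the section correctly: one must first fix $z$, restrict to $D_{z}$, observe that on $D_{z}$ the section value $f_{I}[t,z]$ coincides with the intrinsic directional derivative (away from the countably many wormhole heights), and only then apply the one-dimensional density theorem. A secondary subtlety is verifying that ``$[t,z]\in D_{\varepsilon}(x)$'' for $x=[t_{0},z]$ really is governed by the section values — i.e. that $D_{\varepsilon}([t_{0},z])\cap(\{z\}\times I)$ is exactly $\{[t,z]:|f_{I}[t,z]-f_{I}[t_{0},z]|\leq\varepsilon\}$ up to a null set — which again is immediate once wormhole points are discarded. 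Neither of these is deep, but both require stating the section/intrinsic-derivative identification precisely, as was already done in the proof of Lemma \ref{firstmeas}.
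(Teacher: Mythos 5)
Your proposal follows the paper's proof essentially verbatim: the Borel claim is reduced to countably many rational radii plus measurable dependence on $x$ of the sectionwise Lebesgue measure, and the null-complement statement is derived from approximate continuity of the Borel section $t\mapsto f_{I}[t,z]$ on each vertical line (discarding the null complement of $D_{z}$ and the countable wormhole heights) followed by Lemma \ref{weakFubini}. The only step where the paper does noticeably more work is the measurable dependence you assert in one line: it decouples the moving value $f_{I}(x)$ via rational approximations $q$ and rational $\eta>\varepsilon$, shows $(x_{1},x_{2})\mapsto\mathcal{L}^{1}\{t\in(x_{1}-r,x_{1}+r)\cap I:|f_{I}[t,x_{2}]-q|>\eta\}$ is jointly Borel (continuous in $x_{1}$, measurable in $x_{2}$ by Fubini), and transfers the resulting sets to $F$ through the quotient map — this fills in exactly the point you flagged rather than changing the argument.
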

	
	\begin{proof}
		We first show that $E_{k}^{1}(\varepsilon)$ is Borel with respect to $d$. Clearly $D'$ is a Borel measurable subset of $F$ by Lemma \ref{firstmeas}. 
		Reducing via countable intersections, it suffices to show that for every $r\in (0,1/k)\cap \mathbb{Q}$ the set of $x=[x_{1},x_{2}]\in D'$ for which
		\[ \mathcal{L}^{1}\{t\in (x_{1}-r,x_{1}+r)\cap I:|f_{I}[t,x_{2}]-f_{I}(x)|>\varepsilon \} \leq \varepsilon r\]
		is Borel measurable. Equivalently, for every $r\in (0,1/k)\cap \mathbb{Q}$ and every fixed $\alpha>0$, we must show the set
		\[ \{x\in D': \mathcal{L}^{1}\{t\in (x_{1}-r,x_{1}+r)\cap I :|f_{I}[t,x_{2}]-f_{I}(x)|>\varepsilon \} >\alpha\}\]
		is Borel measurable. However, this set can be decomposed as
		\begin{align*}
			\bigcup_{\substack{\eta>\varepsilon\\ \eta \in \mathbb{Q}}} \bigcap_{n=1}^{\infty}\bigcup_{q\in \mathbb{Q}} & \Big( \{x\in D': |f_{I}(x)-q|<1/n \} \\
			& \qquad \cap \{x\in D': \mathcal{L}^{1}\{t\in (x_{1}-r,x_{1}+r)\cap I: |f_{I}[t,x_{2}]-q|>\eta\}>\alpha\} \Big).
		\end{align*}
		Clearly $ \{x\in D': |f_{I}(x)-q|<1/n \}$ is Borel by Lemma \ref{firstmeas}. We claim
		\[(x_{1},x_{2}) \mapsto \Phi(x_{1},x_{2}):=\mathcal{L}^{1}\{t\in (x_{1}-r,x_{1}+r)\cap I: |f_{I}[t,x_{2}]-q|>\eta\}\]
		is a Borel function on $I\times K$. Indeed, it is a continuous function of $x_{1}$ and is Borel measurable in $x_{2}$ by Fubini's theorem. Hence it is Borel measurable on $I\times K$  which is a product of separable spaces \cite{AB06}. Finally we notice that
		\[\{ x\in D':\Phi(q^{-1}(x))>\alpha\}=q\{x\in q^{-1}(D'):\Phi(x)>\alpha\}.\]
		The Borel measurability with respect to $d$ follows as $q\colon q^{-1}(D')\to D'$ is bijective and continuous with continuous inverse, hence maps Borel sets to Borel sets.
		
		On each vertical line segment, the section of $\bigcup_{k=1}^{\infty} E_{k}^{1}(\varepsilon)$ has full measure with respect to $\mathcal{L}^{1}$. This follows by combining Lemma \ref{firstmeas} (1) and (2) with the fact that Borel measurable functions on $I$ are approximately continuous almost everywhere. with respect to $\mathcal{L}^{1}$. Hence Lemma \ref{weakFubini} gives the conclusion.
	\end{proof}
	
	\begin{lemma}\label{E2}
		For $\varepsilon>0$ and integer $k\geq 1$, $E_{k}^{2}(\varepsilon)$ is Borel with respect to $d$ and
		\[\mathcal{H}^{Q}\left( F\setminus \bigcup_{k=1}^{\infty} E_{k}^{2}(\varepsilon) \right) =0.\]	
	\end{lemma}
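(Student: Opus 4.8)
The plan is to establish the two assertions of Lemma~\ref{E2} separately: that $E_k^2(\varepsilon)$ is Borel with respect to $d$, and that $\bigcup_{k\ge 1}E_k^2(\varepsilon)$ has full $\mathcal{H}^Q$ measure. Throughout write $S_x:=D_\varepsilon(x)\cap E_k^1(\varepsilon)$, $b(r):=C_Q^{-1}\varepsilon^Q r^Q/2^{Q+1}$, and $g_x(r):=\mathcal{H}^Q(B(x,r)\setminus S_x)$, so that for $x\in D'$ one has $x\in E_k^2(\varepsilon)$ precisely when $g_x(r)<b(r)$ for every $0<r<1/k$.

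For the Borel measurability, I would first fix $r$ and show that $x\mapsto g_x(r)$ is Borel on $D'$ via a Fubini--Tonelli argument on $F\times F$. The key observation is that, for $x\in D'$ (so $x\in D$), and since $E_k^1(\varepsilon)\subseteq D'\subseteq D$, the relation $y\notin S_x$ is equivalent to: $y\notin E_k^1(\varepsilon)$, or $y\in E_k^1(\varepsilon)$ and $|f_I(x)-f_I(y)|>\varepsilon$. Because $D'$ and $E_k^1(\varepsilon)$ are Borel (Lemma~\ref{firstmeas} and Lemma~\ref{E1}) and $f_I$ is a Borel function on $D$ (Lemma~\ref{firstmeas}(4)), the set $A_r:=\{(x,y)\in D'\times F:d(x,y)<r,\ y\notin S_x\}$ lies in $\mathcal{B}(F)\otimes\mathcal{B}(F)=\mathcal{B}(F\times F)$. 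Since $F$ is compact and Ahlfors $Q$-regular, $\mathcal{H}^Q$ is a finite Borel measure, so Tonelli's theorem shows $x\mapsto\mathcal{H}^Q((A_r)_x)=g_x(r)$ is Borel. It then remains to pass from ``for every $0<r<1/k$'' to a countable condition: since the open balls $B(x,r)$ increase in $r$ and $b$ is continuous and increasing, $r\mapsto g_x(r)$ is non-decreasing and left-continuous, which lets one replace the uncountable family of conditions by its restriction to rational $r$, at worst after discarding an $\mathcal{H}^Q$-null Borel set (harmless for what follows). This gives that $E_k^2(\varepsilon)$ is Borel.

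For the measure statement I would argue by density. Note $D'$ has full measure by Lemma~\ref{firstmeas}(3) together with the fact that the set of wormholes is $\mathcal{H}^Q$-null; $G:=\bigcup_{m}E_m^1(\varepsilon)$ has full measure by Lemma~\ref{E1}; and $E_m^1(\varepsilon)\subseteq E_{m+1}^1(\varepsilon)$ because the defining condition of $E_{m+1}^1(\varepsilon)$ ranges over the smaller interval $(0,1/(m+1))$. By approximate continuity of the Borel function $f_I$ in the doubling space $(F,\mathcal{H}^Q)$, for a.e.\ $x$ one has $\mathcal{H}^Q(B(x,r)\setminus D_\varepsilon(x))=o(r^Q)$; and by the Lebesgue density theorem applied to each fixed $E_m^1(\varepsilon)$ together with the inclusions above, for a.e.\ $x\in G$ there is an integer $m=m(x)$ with $x\in E_m^1(\varepsilon)$ and $\mathcal{H}^Q(B(x,r)\setminus E_m^1(\varepsilon))=o(r^Q)$. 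For such an $x$, the inclusion $B(x,r)\setminus(D_\varepsilon(x)\cap E_m^1(\varepsilon))\subseteq(B(x,r)\setminus D_\varepsilon(x))\cup(B(x,r)\setminus E_m^1(\varepsilon))$ shows $\mathcal{H}^Q(B(x,r)\setminus(D_\varepsilon(x)\cap E_m^1(\varepsilon)))=o(r^Q)$, so there is $\delta>0$ with this quantity $<b(r)$ for all $0<r<\delta$. Choosing an integer $k\ge\max(m,1/\delta)$ and using $E_k^1(\varepsilon)\supseteq E_m^1(\varepsilon)$ gives $g_x(r)<b(r)$ for all $0<r<1/k$; since $x\in E_m^1(\varepsilon)\subseteq D'$, this means $x\in E_k^2(\varepsilon)$. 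Hence a.e.\ $x\in F$ lies in $\bigcup_{k}E_k^2(\varepsilon)$, which is the assertion.

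I expect the main obstacle to be the Borel measurability, and within it the Fubini--Tonelli step: one must verify carefully that $A_r$ belongs to the product Borel $\sigma$-algebra, which rests squarely on the Borel measurability of $E_k^1(\varepsilon)$ and of $f_I$ from the earlier lemmas and on the compactness and finiteness of $(F,\mathcal{H}^Q)$. A secondary, more cosmetic difficulty is the reduction of the condition ``for all $r\in(0,1/k)$'' to a countable one; monotonicity and left-continuity of $r\mapsto g_x(r)$ together with continuity of $b$ handle this, possibly after enlarging $E_k^2(\varepsilon)$ by an $\mathcal{H}^Q$-null Borel set, which does not affect the remainder of the Rademacher argument. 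By contrast, the measure estimate is routine once one uses the displayed inclusion and exploits that each $E_m^1(\varepsilon)$ is a \emph{fixed} set---so it has density one at almost all of its own points---while the family $\{E_m^1(\varepsilon)\}_m$ is increasing.
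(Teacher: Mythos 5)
Your proof is correct, and the second half (the full-measure statement) is essentially identical to the paper's argument: density points of a fixed $E_m^1(\varepsilon)$, approximate continuity of the Borel function $f_I$, the inclusion $B(x,r)\setminus(A\cap B)\subseteq (B(x,r)\setminus A)\cup(B(x,r)\setminus B)$, and the monotonicity $E_m^1(\varepsilon)\subseteq E_{m+1}^1(\varepsilon)$ to absorb both the density radius and the index into a single $k$. Where you genuinely diverge is the measurability of $x\mapsto \mathcal{H}^{Q}\bigl(B(x,r)\setminus(D_\varepsilon(x)\cap E_k^1(\varepsilon))\bigr)$ for fixed $r$. You run a Fubini--Tonelli argument on $F\times F$: the set $A_r$ lies in $\mathcal{B}(F)\otimes\mathcal{B}(F)=\mathcal{B}(F\times F)$ because $(x,y)\mapsto f_I(x)-f_I(y)$ is product-Borel and $\{d(x,y)<r\}$ is open, and then $\sigma$-finiteness of $\mathcal{H}^Q$ gives measurability of the section measure. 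The paper instead decomposes the superlevel set $\{x\in D':\mathcal{H}^Q\{y\in B(x,r): y\notin E_k^1 \mbox{ or } |f_I(y)-f_I(x)|>\varepsilon\}>\alpha\}$ as a countable union/intersection over rational $\eta>\varepsilon$ and rational $q$ of sets of the form $\{|f_I(x)-q|<1/n\}\cap\{x: \mathcal{H}^Q\{y\in B(x,r): |f_I(y)-q|>\eta \mbox{ or } y\notin E_k^1\}>\alpha\}$, the point being that once the comparison is against a fixed constant $q$ the inner set no longer depends on $x$, and $x\mapsto\mathcal{H}^Q(B(x,r)\cap T)$ is lower semicontinuous for fixed Borel $T$, so the second factor is open. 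Your route is cleaner and avoids the $\varepsilon/\eta/q$ bookkeeping, at the modest cost of invoking the product $\sigma$-algebra identification for separable metric spaces; both are sound.

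One small caveat, which you flag yourself: reducing the quantifier ``for all $r\in(0,1/k)$'' to rational $r$ only recovers $g_x(r)\le b(r)$ at irrational radii (left-continuity of $g_x$ plus continuity of $b$ gives the non-strict inequality in the limit), so the rational-radius set may strictly contain $E_k^2(\varepsilon)$, and it is not clear the difference is null as you assert. The clean fix is simply to observe that the lemma and all subsequent uses of $E_k^2(\varepsilon)$ in the Rademacher argument go through verbatim with the non-strict version of the defining inequality (or with the rational-radius set), since the constant $C_Q^{-1}\varepsilon^Q/2^{Q+1}$ has slack. Note the paper's own write-up does not address this uncountable intersection over $r$ for $E_k^2(\varepsilon)$ either, so you are not behind it on this point.
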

	
	\begin{proof}
		Fix $\varepsilon>0$. We first show that $E_{k}^{2}(\varepsilon)$ is Borel with respect to $d$. It suffices to show that the map $D' \to \bbR$ given by $x\mapsto \mathcal{H}^{Q}(B(x,r)\setminus (D(x)\cap E_{k}^{1}))$ is Borel. To do this we first notice that for every $\alpha>0$ the set
		\begin{align*}
			\{x\in D': \mathcal{H}^{Q}\{y\in B(x,r): y\notin E_{k}^{1} \mbox{ or }|f_{I}(y)-f_{I}(x)|>\varepsilon\}>\alpha
		\end{align*}
		can be written as
		\begin{align*}
			& \bigcup_{\substack{\eta>\varepsilon\\ \eta \in \mathbb{Q}}} \bigcap_{n=1}^{\infty}\bigcup_{q\in \mathbb{Q}} \Big(\{x\in D': |f_{I}(x)-q|<1/n\} \\
			& \qquad \qquad \qquad \cap \{x\in D': \mathcal{H}^{Q}\{y\in B(x,r): |f_{I}(y)-q|>\eta \mbox{ or }y\notin E_{k}^{1}\}>\alpha\} \Big).
		\end{align*}
		The first set inside the decomposition above is Borel by Lemma \ref{firstmeas}. The second is an open subset of $D'$, hence Borel. Hence $E_{k}^{2}(\varepsilon)$ is Borel.
		
		Using Lemma \ref{E1}, almost every point of $F$ is a density point of $E_{k}^{1}$ for some $k\geq 1$. Also, since $f_{I}$ is Borel, almost every point of $F$ is a point of approximate continuity of $f_{I}$. Hence for almost every $x$ there exists $k\in \bbN$ and $R>0$ such that
		\[\mathcal{H}^{Q}\Big( B(x,r)\setminus ( D_{\varepsilon}(x) \cap E_{k}^{1}(\varepsilon) ) \Big) <C_{Q}^{-1}\varepsilon^{Q}r^{Q}/2^{Q+1}\]
		for every $0<r<R$. Choose $K\in \bbN$ such that $K\geq k$ and $1/K<R$. Then using the fact $E_{k}^{1}(\varepsilon)\subset E_{K}^{1}(\varepsilon)$ it follows
		\[\mathcal{H}^{Q}\Big( B(x,r)\setminus ( D_{\varepsilon}(x) \cap E_{K}^{1}(\varepsilon) ) \Big) <C_{Q}^{-1}\varepsilon^{Q}r^{Q}/2^{Q+1}\]
		for every $0<r<1/K$. Hence $x\in E_{K}^{2}(\varepsilon)$. This shows $\mathcal{H}^{Q}(F\setminus \bigcup_{k=1}^{\infty} E_{k}^{2}(\varepsilon)) =0$.
	\end{proof}
	
	\subsection{Choice of Suitable Line Segments}
	
	Fix $0<\varepsilon<1$. Let $x\in \bigcup_{k=1}^{\infty} E_{k}^{2}(\varepsilon)$ and fix $K\geq 1$ such that $x\in E_{K}^{2}(\varepsilon)$. Let $y\in F$ with $d(y,x)<1/(2K)$. Let $N\geq 1$ be the unique integer such that $1/3^{N}\leq d(x,y)< 1/3^{N-1}$.
	
	Assume that infinitely many wormhole levels are needed to connect $x$ to $y$ by a geodesic. It will be clear how the following argument can be simplified if only finitely many wormhole levels or even no wormhole levels are needed. Denote $T=d(x,y)$ and choose $\gamma\colon [0,T]\to F$ such that
	\begin{itemize}
		\item $\gamma$ is a geodesic from $x$ to $y$ with $\gamma(0)=x$ and $\gamma(T)=y$.
		\item $\gamma$ is a concatenation of countably many lines in the $I$ direction parameterized at unit speed.
	\end{itemize}
	By Lemma \ref{atmostone}, any geodesic joining $x$ to $y$ must pass through at most one wormhole of level less than or equal to $N-1$. We enumerate the wormhole levels needed to connect $x$ to $y$ by a strictly increasing sequence $N_{i}$ for integer $i\geq 0$, where possibly $N_{0}\leq N-1$, but necessarily $N_{i}\geq N$ for $i\geq 1$. Since $N_{1}\geq N$ and $N_{i}$ are strictly increasing, it follows that $N_{i}\geq N+i-1$ for $i\geq 1$.
	
	For each $i\geq 0$, let $\lambda_{i}$ be the $I$ component of the point where $\gamma$ jumps using the wormhole of order $N_{i}$. Geodesics in $F$ can be chosen so that they change their direction (up or down) in the $I$ component at most twice (Proposition \ref{prop_geodesic}). Hence, during any subinterval of $[0,T]$ of length $t$, the geodesic spends at least a time $t/3$ following the same direction (either up or down but not changing between them) in the $I$ component. Since in any direction wormhole levels of order $N_{i}$ are spaced apart by at most a distance $2/3^{N_{i}}$, we can additionally choose $\gamma$ so that it satisfies:
	\begin{itemize}
		\item $\lambda_{0}\leq d(x,y)$, and
		\item $\lambda_{i}\leq 2/3^{N_{i}-1}$ for $i\geq 1$.
	\end{itemize}
	Using $N_{i}\geq N+i-1$ for $i\geq 1$ and the definition of $N$, we estimate as follows:
	\begin{align*}\sum_{i=0}^{\infty} \lambda_{i} &\leq d(x,y)+\sum_{i=1}^{\infty} \frac{2}{3^{N_{i}-1}}\\
		&\leq d(x,y)+\sum_{i=1}^{\infty}\frac{2}{3^{N+i-2}}\\
		&=d(x,y)+\frac{1}{3^{N-2}}\\
		&\leq 10d(x,y).	
	\end{align*}
	Let $(\mu_{i})_{i=0}^{\infty}$ be a strictly decreasing rearrangement of $\{\lambda_{i}:i\geq 0\}\cup \{T\}$. Thus $\mu_{0}=T$, $\mu_{i}\to 0$ as $i\to \infty$, $\gamma|_{[\mu_{i+1},\mu_{i}]}$ is a line segment for each $i\geq 0$, and
	\begin{equation}\label{summu}
		\sum_{i=0}^{\infty} \mu_{i}=\sum_{i=0}^{\infty} \lambda_{i} +T \leq 11d(x,y).
	\end{equation}
	Denote $p_{i}=\gamma(\mu_{i})$ for $i\geq 0$. Notice $p_{0}=y$ and $p_{i}\to x$ as $i\to \infty$. It follows that 
	\begin{equation}\label{telescope}
		f(y)-f(x)=\sum_{i=0}^{\infty} (f(p_{i})-f(p_{i+1})).
	\end{equation}
	Since $\gamma|_{[\mu_{i+1},\mu_{i}]}$ is a line segment in the $I$ direction, it follows $p_{i}$ is reached from $p_{i+1}$ by travelling a displacement $h(p_{i})-h(p_{i+1})$ in the $I$ direction.
	
	\subsection{Estimate Along Line Segments}
	
	Our aim is to show that $f(p_{i})-f(p_{i+1})$ is well approximated by $f_{I}(x)(h(p_{i}) - h(p_{i+1}))$ for every $i\geq 0$. Fix $i\geq 0$ until otherwise stated.
	
	\begin{lemma}\label{qs}
		There exist points $q_{i}, q_{i+1}\in F$ with the following properties:
		\begin{enumerate}
			\item $d(q_{i+1},p_{i+1})\leq \varepsilon \mu_{i+1}$,
			\item $d(q_{i},p_{i})\leq 6\varepsilon \mu_{i+1}$,
			\item $q_{i+1}\in E_{K}^{1}(\varepsilon)\cap D_{\varepsilon}(x)$,
			\item $q_{i}$ is reached from $q_{i+1}$ by travelling a vertical displacement $h(p_{i})-h(p_{i+1})$ in the $I$ direction.
		\end{enumerate}
	\end{lemma}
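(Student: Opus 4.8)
The plan is to take $q_{i+1}$ to be a point of $F$ lying in the ``good'' set $D_{\varepsilon}(x)\cap E_{K}^{1}(\varepsilon)$ and close to $p_{i+1}$, and then to let $q_{i}$ be the translate of $q_{i+1}$ by the same vertical displacement $\delta:=h(p_{i})-h(p_{i+1})$ that carries $p_{i+1}$ to $p_{i}$ along $\gamma$; properties (1), (3) and (4) will then hold essentially by construction, and all the work is in the estimate (2), which is where the non-Euclidean geometry of $F$ enters. First I would record the elementary facts that make the construction run: since $\gamma$ is a unit-speed geodesic with $\gamma(0)=x$, we have $d(p_{i+1},x)=\mu_{i+1}$; the points $p_{i+1}$ and $p_{i}$ lie on a single line segment of $\gamma$ in the $I$-direction, say at Cantor coordinate $c\in K$, so that $p_{i+1}=[h(p_{i+1}),c]$ and $p_{i}=[h(p_{i+1})+\delta,c]$ with $|\delta|=\mu_{i}-\mu_{i+1}$; and $2\mu_{i+1}\le 2\mu_{0}=2d(x,y)<1/K$, so the inequality defining $E_{K}^{2}(\varepsilon)$ is available at radius $2\mu_{i+1}$.

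To select $q_{i+1}$, observe that $d(p_{i+1},x)=\mu_{i+1}$ gives $B(p_{i+1},\varepsilon\mu_{i+1})\subset B(x,2\mu_{i+1})$, so the $E_{K}^{2}(\varepsilon)$ inequality bounds the $\mathcal{H}^{Q}$-measure of the bad part $B(x,2\mu_{i+1})\setminus(D_{\varepsilon}(x)\cap E_{K}^{1}(\varepsilon))$ by $C_{Q}^{-1}\varepsilon^{Q}(2\mu_{i+1})^{Q}/2^{Q+1}=C_{Q}^{-1}(\varepsilon\mu_{i+1})^{Q}/2$. Let $K_{a}$ be the Cantor block containing $c$ with $a$ the shortest binary string such that $1/3^{|a|}\le\varepsilon\mu_{i+1}$, and set $P:=\{[t,c']:c'\in K_{a},\ |t-h(p_{i+1})|<\varepsilon\mu_{i+1}/6\}$; one checks $P\subset B(p_{i+1},\varepsilon\mu_{i+1})$, and Ahlfors regularity of $F$ and of $K$ shows $\mathcal{H}^{Q}(P)$ is comparable to $(\varepsilon\mu_{i+1})^{Q}$ and --- this is precisely what the constant $2^{Q+1}$ in the definition of $E_{K}^{2}(\varepsilon)$ was calibrated for --- strictly larger than the bad measure above. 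Hence $P$ meets $D_{\varepsilon}(x)\cap E_{K}^{1}(\varepsilon)$, and I would take $q_{i+1}$ in that intersection: this yields (1) and (3), and since $E_{K}^{1}(\varepsilon)\subset D'$ the point $q_{i+1}$ is not a wormhole, so $q_{i}:=[h(q_{i+1})+\delta,c^{*}]$, with $c^{*}$ the Cantor coordinate of $q_{i+1}$, is well defined and satisfies (4). A minor adjustment handles the case $h(p_{i})\in\{0,1\}$.

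For (2), note first that $h$ is $1$-Lipschitz, so $|h(q_{i+1})-h(p_{i+1})|\le d(q_{i+1},p_{i+1})\le\varepsilon\mu_{i+1}$, and $h(q_{i})-h(p_{i})=h(q_{i+1})-h(p_{i+1})$ by the definitions of $q_{i}$ and $\delta$. By the choice of $q_{i+1}$, its Cantor coordinate $c^{*}$ lies in the same block $K_{a}$ as $c$, a block of diameter $\le\varepsilon\mu_{i+1}$, so $c^{*}$ and $c$ can be connected using only wormhole levels $J_{n}$ with $2/3^{n}\le 2\varepsilon\mu_{i+1}$. Since any height interval of length $2/3^{n_{0}}$ contains a representative of $J_{n}$ for every $n\ge n_{0}$, there is a height interval $[A,B]$ with $B-A\le 2\varepsilon\mu_{i+1}$ containing $h(q_{i})$, $h(p_{i})$ and a representative of each wormhole level needed to join $c^{*}$ to $c$. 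A path from $q_{i}$ that goes down to height $A$, up to height $B$, then to $p_{i}$, jumping where needed, connects $q_{i}$ to $p_{i}$ and has length $2(B-A)-|h(q_{i})-h(p_{i})|$ (compare Proposition~\ref{prop_geodesic}); therefore $d(q_{i},p_{i})\le 2(B-A)\le 4\varepsilon\mu_{i+1}\le 6\varepsilon\mu_{i+1}$.

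The step I expect to be the main obstacle is the local-geometric input behind the selection: one must verify that the portion of the small ball $B(p_{i+1},\varepsilon\mu_{i+1})$ lying over a single Cantor block at scale $\varepsilon\mu_{i+1}$ carries a definite fraction of the ball's total $\mathcal{H}^{Q}$-measure, large enough to beat the exceptional set supplied by $E_{K}^{2}(\varepsilon)$ --- and this must be done with $p_{i+1}$ allowed to be a wormhole, so that $B(p_{i+1},\varepsilon\mu_{i+1})$ has several prongs and possibly coarser side-branches. Forcing $q_{i+1}$ to sit over the \emph{correct} block, rather than merely somewhere in the good set, is exactly what keeps $d(q_{i},p_{i})$ comparable to $d(q_{i+1},p_{i+1})$ despite the possibly large displacement $\delta$; this comparison, the genuinely nonlinear point of the argument, is the crux.
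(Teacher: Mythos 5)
Your overall strategy is the same as the paper's: use the $E_{K}^{2}(\varepsilon)$ inequality on a ball centred at $x$ containing $B(p_{i+1},\varepsilon\mu_{i+1})$ to bound the bad set, pick $q_{i+1}$ in the good set on the ``correct prong'' of that ball, translate vertically to define $q_{i}$, and control $d(q_{i},p_{i})$ using the fact that only high-order wormholes separate the Cantor coordinates. However, your selection mechanism has a genuine quantitative gap at exactly the point you flag as the crux. You need $\mathcal{H}^{Q}(P)$ to exceed the bad-set bound $C_{Q}^{-1}(\varepsilon\mu_{i+1})^{Q}/2$, and you assert that the factor $2^{Q+1}$ in the definition of $E_{K}^{2}(\varepsilon)$ was ``calibrated'' for this. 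It was not: that factor is calibrated so that the bad set has measure less than \emph{half of the full ball} $B(p_{i+1},\varepsilon\mu_{i+1})$, whose measure is at least $C_{Q}^{-1}(\varepsilon\mu_{i+1})^{Q}$ by Ahlfors regularity. Your set $P$ is a cylinder over a single Cantor block of diameter roughly $\varepsilon\mu_{i+1}$ with a height window of length $\varepsilon\mu_{i+1}/3$; its measure is only bounded below by $c\,(\varepsilon\mu_{i+1})^{Q}$ where $c$ depends on the comparability constant between $\mu=q_{*}(\mathcal{H}^{1}\times\mathcal{H}^{Q-1})$ and $\mathcal{H}^{Q}$ and on the regularity constant of $K$, and nothing forces $c>C_{Q}^{-1}/2$. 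A secondary constant problem: the claimed inclusion $P\subset B(p_{i+1},\varepsilon\mu_{i+1})$ is not verified and appears to fail — a point $[t,c']$ with $|t-h(p_{i+1})|<\varepsilon\mu_{i+1}/6$ and $c'\in K_{a}$ requires wormholes of order at least $|a|+1$, so its minimal height interval can have length up to $\varepsilon\mu_{i+1}/6+2/3^{|a|+1}\le(1/6+2/3)\varepsilon\mu_{i+1}$, giving a distance up to $(5/3)\varepsilon\mu_{i+1}>\varepsilon\mu_{i+1}$. Shrinking the block to restore (1) makes the measure comparison in the previous sentence strictly worse.

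The paper's proof closes this gap differently and you should adopt its device: fix $B$ with $1/3^{B}\le\varepsilon\mu_{i+1}<1/3^{B-1}$, so that within vertical distance $\varepsilon\mu_{i+1}$ of $p_{i+1}$ there is at most one wormhole of order at most $B-1$ (Lemma \ref{atmostone}). The set of points of $B(p_{i+1},\varepsilon\mu_{i+1})$ accessible from the segment joining $p_{i+1}$ to $p_{i}$ without using that single low-order wormhole occupies at least half of the ball in measure, hence at least $C_{Q}^{-1}(\varepsilon\mu_{i+1})^{Q}/2$, which matches the bad-set bound exactly; one then chooses $q_{i+1}$ in this half intersected with $D_{\varepsilon}(x)\cap E_{K}^{1}(\varepsilon)$. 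This replaces your Cantor-block cylinder by a set whose measure is compared to the whole ball rather than estimated from scratch, and it is precisely what makes the constant $2^{Q+1}$ sufficient. Your estimate for (2) is then recovered as in the paper: $q_{i}$ is separated from $p_{i}$ by a vertical displacement at most $2\varepsilon\mu_{i+1}$ and wormholes of order at least $B$, spaced at most $2/3^{B}\le 2\varepsilon\mu_{i+1}$ apart, giving $d(q_{i},p_{i})\le 2\varepsilon\mu_{i+1}+4/3^{B}\le 6\varepsilon\mu_{i+1}$.
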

	
	\begin{proof}
		Fix integer $B\geq 1$ such that $1/3^B\leq \varepsilon \mu_{i+1} < 1/3^{B-1}$. This implies that within vertical distance $\varepsilon \mu_{i+1}$ above and below $p_{i+1}$, there is at most one wormhole level of order less than or equal to $B-1$.
		
		By Ahlfors $Q$-regularity of $\mathcal{H}^{Q}$,
		\[\mathcal{H}^{Q}(B(p_{i+1},\varepsilon \mu_{i+1}))\geq C_{Q}^{-1}\varepsilon^{Q}\mu_{i+1}^{Q}.\]
		Using $0<\varepsilon<1$ and $\mu_{i+1}\leq B<1/2K$ gives $\mu_{i+1}+\varepsilon\mu_{i+1}<1/K$. Hence the fact that $x\in E_{K}^{2}(\varepsilon)$ gives,
		\[\mathcal{H}^{Q} \Big( B(x,\mu_{i+1}+\varepsilon\mu_{i+1})\setminus ( D_{\varepsilon}(x) \cap E_{K}^{1}(\varepsilon)) \Big) <\frac{C_{Q}^{-1}\varepsilon^{Q}(\mu_{i+1}+\varepsilon\mu_{i+1})^{Q}}{2^{Q+1}}.\]
		Recalling $0<\varepsilon<1$, these imply
		\[ \mathcal{H}^{Q}\Big( B(x,\mu_{i+1}+\varepsilon\mu_{i+1})\setminus ( D_{\varepsilon}(x) \cap E_{K}^{1}(\varepsilon))\Big) < \frac{\mathcal{H}^{Q}(B(p_{i+1},\varepsilon \mu_{i+1}))}{2}.\]
		Since $\gamma$ is a geodesic, $d(x,p_{i+1})=d(\gamma(0),\gamma(\mu_{i+1}))=\mu_{i+1}$. Hence
		\[B(p_{i+1},\varepsilon \mu_{i+1}) \subset B(x,\mu_{i+1}+\varepsilon\mu_{i+1}).\]
		This implies
		\[ \mathcal{H}^{Q}\Big( B(p_{i+1},\varepsilon \mu_{i+1}) \setminus ( D_{\varepsilon}(x) \cap E_{K}^{1}(\varepsilon))\Big) < \frac{\mathcal{H}^{Q}(B(p_{i+1},\varepsilon \mu_{i+1}))}{2}.\]
		At least half in measure of the points in $B(p_{i+1},\varepsilon \mu_{i+1})$ are accessible from the line segment joining $p_{i+1}$ to $p_{i}$ without using a wormhole level of order at most $B-1$. Hence we can choose $q_{i+1}$ accessible from the line segment joining $p_{i+1}$ to $p_{i}$ by jump levels of order $n\geq B$ with
		\[q_{i+1} \in B(p_{i+1},\varepsilon \mu_{i+1})\cap D_{\varepsilon}(x) \cap E_{K}^{1}(\varepsilon).\]
		Clearly this choice of $q_{i+1}$ satisfies (1) and (3). 
		
		Next define $q_{i}$ from $q_{i+1}$ as stated in (4). Then $q_{i}$ can be reached from $p_{i}$ from a vertical displacement at most $2 \varepsilon \mu_{i+1}$ and wormhole levels of order $n\geq B$. Such jump levels are spaced by at most $2/3^{B}$ in the vertical direction. Hence
		\[d(p_{i},q_{i})\leq 2 \varepsilon \mu_{i+1}+(4/3^B)\leq 6 \varepsilon \mu_{i+1}.\]
		This shows that $q_{i}$ satisfies (2) and completes the proof.
	\end{proof}
	
	Note that the points $q_{i}, q_{i+1}$ in Lemma \ref{qs} may not be consistent as $i$ varies. This will not be an issue as $i$ will remain fixed while we estimate $f(p_{i})-f(p_{i+1})$. Denote $q_{i}=[t_{i},z]$ and $q_{i+1}=[t_{i+1},z]$. The fact that $q_{i+1}\in E_{K}^{1}(\varepsilon)$ gives
	\[\mathcal{L}^{1}\{t\in (t_{i+1}-r,t_{i+1}+r) \cap I:[t,z] \notin D_{\varepsilon}(q_{i+1})\}\leq \varepsilon r \qquad \mbox{for every }0<r<1/K.\]
	Since $f$ is Lipschitz, hence absolutely continuous along lines in the $I$ direction,
	\[|f(q_{i})-f(q_{i+1})-f_{I}(q_{i+1})(h(q_{i})-h(q_{i+1}))|\leq \left|\int_{t_{i+1}}^{t_{i}} (f_{I}[s,z]-f_{I}(q_{i+1}))\dd s\right|.\]
	We divide the right side into two pieces which we estimate separately. Firstly, using the definition of $D_{\varepsilon}(q_{i+1})$ gives
	\[\left|\int_{t_{i+1}}^{t_{i}} (f_{I}[s,z]-f_{I}(q_{i+1})) \chi_{\{s:[s,z]\in D_{\varepsilon}(q_{i+1})\}}(s)\dd s\right|\leq \varepsilon |t_{i}-t_{i+1}|.\]
	Secondly, the fact that $q_{i+1}\in E_{K}^{1}(\varepsilon)$ and $T<1/K$ yields
	\[\left|\int_{t_{i+1}}^{t_{i}} (f_{I}[s,z]-f_{I}(q_{i+1})) \chi_{\{s:[s,z]\notin D_{\varepsilon}(q_{i+1})\}}(s)\dd s\right|\leq 2L\varepsilon|t_{i}-t_{i+1}|.\]
	Combining the two estimates yields.
	\[|f(q_{i})-f(q_{i+1})-f_{I}(q_{i+1})(h(q_{i})-h(q_{i+1}))|\leq (2L+1)\varepsilon |t_{i}-t_{i+1}|.\]
	Using Lemma \ref{qs} together with the fact that
	\[h(p_{i})-h(p_{i+1})=h(q_{i})-h(q_{i+1})=t_{i}-t_{i+1}\] 
	yields
	\begin{align}
		&|f(p_{i})-f(p_{i+1})-f_{I}(x)(h(p_{i})-h(p_{i+1}))| \label{lineest}\\
		&\qquad \qquad \leq (2L+2)\varepsilon |h(p_{i})-h(p_{i+1})|+7L\varepsilon\mu_{i+1} \nonumber.
	\end{align}
	
	\subsection{Conclusion of the Proof}
	
	Adding \eqref{lineest} over all integers $i\geq 0$ using \eqref{summu}, \eqref{telescope} and the triangle inequality gives
	\begin{align*}
		&|f(y)-f(x)-f_{I}(x)(h(y)-h(x))|\\
		&\qquad \qquad \leq (2L+2)\varepsilon \sum_{i=0}^{\infty}|h(p_{i})-h(p_{i+1})|+7L\varepsilon\sum_{i=0}^{\infty}\mu_{i+1}\\
		&\qquad \qquad \leq (2L+2)\varepsilon d(x,y) + 77L\varepsilon d(x,y).
	\end{align*}
	To summarize, we have shown the following. Given $0<\varepsilon<1$, for almost every $x\in F$ there exists $\delta>0$ such that $d(x,y)<\delta$ implies
	\[|f(y)-f(x)-f_{I}(x)(h(y)-h(x))|\leq (79L+2)\varepsilon d(x,y).\]
	For each integer $n\geq 2$, 
	let $D_{n}$ be the set of $x\in F$ such that
	\[ \limsup_{y\to x} \frac{|f(y)-f(x)-f_{I}(x)(h(y)-h(x))|}{d(x,y)}>\frac{(79L+2)}{n}.\]
	We have shown that $\mathcal{H}^{Q}(D_{n})=0$ for each integer $n\geq 2$. Hence $\mathcal{H}^{Q}(\cup_{n=1}^{\infty}D_{n})=0$. Since $(\cup_{n=1}^{\infty}D_{n})^{c}=\cap_{n=1}^{\infty}D_{n}^{c}$ is the set of points where $f$ is differentiable, it follows that $f$ is differentiable almost everywhere. This proves Theorem \ref{Rademacher}.


\begin{thebibliography}{50}
		\bibitem{AB06} Aliprantis, Charalambos D., Border, Kim: \emph{Infinite Dimensional Analysis: A Hitchhiker's Guide}, 3rd Edition, Springer-Verlag Berlin Heidelberg, 2006.
		\bibitem{ACP10} Alberti, G., Csornyei, M., Preiss, D.: \emph{Differentiability of Lipschitz Functions, Structure of Null Sets, and Other Problems}, Proc. Int. Congress Math. III, (2010), 1379--1394.
		\bibitem{BL00}  Benyamini, Y., Lindenstrauss, J.: \emph{Geometric nonlinear functional analysis. Vol. 1.}, American Mathematical Society Colloquium Publications, 48. American Mathematical Society, Providence, RI, 2000. xii+488 pp. ISBN: 0-8218-0835-4 
		\bibitem{BS13} Bate, D., Speight, G.: \emph{Differentiability, Porosity and Doubling in Metric Measure Spaces}, Proceedings of the American Mathematical Society 141(3), 2013, 971--985.
		\bibitem{Che99} Cheeger, J.: \emph{Differentiability of Lipschitz Functions on Metric Measure Spaces}, Geometric and Functional Analysis 9(3) (1999), 428--517.
		\bibitem{CJ15} Csornyei, M., Jones, P.: \emph{Product formulas for measures and applications to analysis and geometry}, announcement of result is in slides available at: http://www.math.sunysb.edu/Videos/dfest/PDFs/38-Jones.pdf.
		\bibitem{DM11} Dor\'e, M., Maleva, O.: \emph{A compact null set containing a differentiability point of every Lipschitz function}, Mathematische Annalen 351(3) (2011), 633--663.
		\bibitem{DM12} Dor\'e, M., Maleva, O.: \emph{A compact universal differentiability set with Hausdorff dimension one}, Israel Journal of Mathematics 191(2) (2012), 889--900.
		\bibitem{DMf} Dor\'e, M., Maleva, O.: \emph{A universal differentiability set in Banach spaces with separable dual}, Journal of Functional Analysis 261 (2011), 1674--1710.
		\bibitem{DM14} Dymond, M., Maleva, O.: \emph{Differentiability inside sets with upper Minkowski dimension one}, Michigan Mathematical Journal 65 (2016), 613--636.
		\bibitem{EG15} Evans, Lawrence C., Gariepy, Ronald F.: \emph{Measure Theory and Fine Properties of Functions}, Textbooks in Mathematics. CRC Press, Boca Raton, FL, 2015. xiv+299 pp. ISBN: 978-1-4822-4238-6.
		\bibitem{Fit84} Fitzpatrick, S.: \emph{Differentiation of Real-Valued Functions and Continuity of Metric Projections}, Proceedings of the American Mathematical Society 91(4) (1984), 544--548.
		\bibitem{Hei01} Heinonen, J.: \emph{Lectures on Analysis on Metric Spaces}, Universitext, Springer, 2001.
		\bibitem{Laa00} Laakso, T. J., \emph{Ahlfors $Q$-Regular Spaces with Arbitrary $Q>1$ Admitting Weak Poincar\'e Inequality}, Geometric and Functional Analysis 10(1), 111--123, 2000.
		\bibitem{LPS17} Le Donne, E., Pinamonti, A., Speight, G.: \emph{Universal Differentiability Sets and Maximal Directional Derivatives in Carnot Groups}, Journal de Math\'{e}matiques Pures et Appliqu\'{e}es 121 (2019), 83--112.
		\bibitem{LPT13} Lindenstrauss, J., Preiss, D., Tiser, J.: \emph{Fr\'echet Differentiability of Lipschitz Functions and Porous Sets in Banach Spaces}, Annals of Mathematics Studies 179, Princeton University Press (2012).
		\bibitem{Mag12} Maggi, F.: \emph{Sets of Finite Perimeter and Geometric Variational Problems},  Cambridge Studies in Advanced Mathematics, 135. Cambridge University Press, Cambridge, 2012. xx+454 pp. ISBN: 978-1-107-02103-7.
		\bibitem{MPS} Magnani, V., Pinamonti, A., Speight, G.:
\emph{Porosity and differentiability of Lipschitz maps from stratified groups to Banach homogeneous groups},  Ann. Mat. Pura Appl. (4) 199 (2020), no. 3, 1197--1220.
		\bibitem{Pan89} Pansu, P.: \emph{Metriques de Carnot-Caratheodory et quasiisometries des espaces symwtriques de rang un}, Annals of Mathematics 129(1) (1989), 1-60.
		\bibitem{PS15} Preiss, D., Speight, G.: \emph{Differentiability of Lipschitz Functions in Lebesgue Null Sets}, Inventiones Mathematicae 199(2) (2015), 517--559.
\bibitem{PS172} Pinamonti, A., Speight, G.:
\emph{Porosity, differentiability and Pansu's theorem.} . J. Geom. Anal. 27 (2017), no. 3, 2055--2080.		
		\bibitem{PS16} Pinamonti, A., Speight, G.: \emph{A Measure Zero Universal Differentiability Set in the Heisenberg Group}, Mathematische Annalen 368, no. 1-2, (2017) 233--278.
		\bibitem{PS17} Pinamonti, P., Speight, G.: \emph{Structure of Porous Sets in Carnot Groups}, Illinois Journal of Mathematics 61(1-2) (2017) 127--150.
		\bibitem{PS18} Pinamonti, A., Speight, G.: \emph{Universal Differentiability Sets in Carnot Groups of Arbitrarily High Step}, Israel Journal of Mathematics, (2019).
		\bibitem{Pre90} Preiss, D.: \emph{Differentiability of Lipschitz Functions on Banach spaces}, Journal of Functional Analysis 91(2) (1990), 312--345.
		\bibitem{Zah46} Zahorski, Z.: \emph{Sur l'ensemble des points de non-d\'erivabilit\'e d'une fonction continue}, Bulletin de la Soci\'et\'e Math\'ematique de France 74 (1946), 147--178.
	\end{thebibliography}
\end{document}